\title{\LARGE \bf
Stability and Performance Analysis of Model Predictive Control of Uncertain Linear Systems
}
\author{Changrui Liu\textsuperscript{$\text{*}$}, Shengling Shi\textsuperscript{$\dagger$} and Bart De Schutter\textsuperscript{$\text{*}$}, \textit{Fellow IEEE}
\thanks{This paper is part of a project that has received funding from the 
European Research Council (ERC) under the European Union’s Horizon 
2020 research and innovation programme (Grant agreement No. 101018826 - CLariNet).}
\thanks{\textsuperscript{$\text{*}$}C. Liu and B. De Schutter are with the Delft Center for Systems and Control, Delft University of Technology, Delft, The Netherlands.
        {\tt\small \{C.Liu-14,B.Deschutter\}@tudelft.nl}}
\thanks{\textsuperscript{$\dagger$}S. Shi is with the Department of Chemical Engineering, Massachusetts Institute of Technology, Cambridge, United States. {\tt\small slshi@mit.edu}}%
}
\newtheorem{theorem}{Theorem}
\newtheorem{definition}{Definition}
\newtheorem{assumption}{Assumption}
\newtheorem{lemma}{Lemma}
\newtheorem{corollary}{Corollary}
\newtheorem{remark}{Remark}
\newtheorem{proposition}{Proposition}
\newcommand{\cliu}[1]{{\texttransparent{1}{\color{black}#1}}}
\newcommand{\qt}[1]{{\texttransparent{1}{\color{black}#1}}}
\newcommand{\bR}{\mathbb{R}}
\newcommand{\bZ}{\mathbb{Z}}
\newcommand{\bN}{\mathbb{N}}
\newcommand{\cX}{\mathcal{X}}
\newcommand{\cU}{\mathcal{U}}
\newcommand{\cA}{\mathcal{A}}
\newcommand{\cB}{\mathcal{B}}
\newcommand{\splus}{\hspace{-0.08cm}+\hspace{-0.08cm}}
\newcommand{\sminus}{\hspace{-0.08cm}-\hspace{-0.08cm}}
\newcommand{\seq}{\hspace{-0.08cm}=\hspace{-0.08cm}}
\newcommand{\sdeq}{\hspace{-0.08cm}:=\hspace{-0.08cm}}
\newcommand{\sless}{\hspace{-0.08cm}<\hspace{-0.08cm}}
\newcommand{\spd}{.\hspace{-0.15cm}}
\newcommand{\scm}{,\hspace{-0.15cm}}
\newcommand{\upQ}{\overline{\sigma}_Q}
\newcommand{\xb}{x_{\mathrm{bd}}}
\newcommand{\ub}{u_{\mathrm{bd}}}
\newcommand{\hA}{\widehat{A}}
\newcommand{\hB}{\widehat{B}}
\newcommand{\hK}{\widehat{K}}
\newcommand{\eA}{\delta_{A}}
\newcommand{\eB}{\delta_{B}}
\newcommand{\bu}{\mathbf{u}}
\newcommand{\otx}{\psi_x}
\newcommand{\oex}{\widehat{\psi}_x}
\newcommand{\ctx}{\phi_x}
\newcommand{\cex}{\widehat{\phi}_x}
\newcommand{\ctxk}{\phi^{(\kappa)}_x}
\newcommand{\uit}{\mathbf{u}^\ast_{\infty}}
\newcommand{\Vit}{V_\infty}
\newcommand{\un}{\bu^\ast_{N}}
\newcommand{\Vn}{V_{N}}
\newcommand{\hun}{\widehat{\bu}^\ast_{N}}
\newcommand{\hVn}{\widehat{V}_{N}}
\newcommand{\bvn}{\mathbf{v}_{N}}
\newcommand{\hmuN}{\widehat{\mu}_N}
\newcommand{\muN}{\mu_N}
\newcommand{\Vf}{V_{\mathrm{f}}}
\newcommand{\nf}{N_{\mathrm{f}}}
\newcommand{\fx}{\mathcal{X}_{\mathrm{tnd}}}
\newcommand{\xp}{x^{+}_{\mathrm{re}}}
\newcommand{\xroa}{\mathcal{X}_{\mathrm{ROA}}}
\newcommand{\du}{\delta\mathbf{u}}
\newcommand{\nQ}[1]{\| #1 \|^2_Q}
\newcommand{\nR}[1]{\| #1 \|^2_R}
\newcommand{\ntwo}[1]{\| #1 \|_2}
\newcommand{\Acl}{A_{\mathrm{cl}}}
\newcommand{\hAcl}{\widehat{A}_{\mathrm{cl}}}
\newcommand{\Epsi}{E_{N,(\psi)}}
\newcommand{\Epsiu}{E_{N,(\psi,u)}}
\newcommand{\Eu}{E_{N,(u)}}
\newcommand{\hPhiN}{\widehat{\Phi}_N}
\newcommand{\PhiN}{\Phi_N}
\newcommand{\hGaN}{\widehat{\Gamma}_N}
\newcommand{\GaN}{\Gamma_N}
\newcommand{\bQN}{\overline{Q}_N}
\newcommand{\bQNp}{\overline{Q}_{N+1}}
\newcommand{\bRN}{\overline{R}_N}
\newcommand{\hHN}{\widehat{H}_N}
\newcommand{\hXi}{\widehat{\Xi}}
\newcommand{\hzeta}{\widehat{\zeta}}
\newcommand{\myratio}{\frac{ \delta^2_B \underline{\sigma}_Q }{ \delta^2_A \underline{\sigma}_R }}
\newcommand{\invmyratio}{\frac{ \delta^2_A \underline{\sigma}_R }{ \delta^2_B \underline{\sigma}_Q }}
\begin{document}

\maketitle
\thispagestyle{empty}
\pagestyle{empty}

\begin{abstract}
Model mismatch often poses challenges in model-based controller design. This paper investigates model predictive control (MPC) of uncertain linear systems with input constraints, focusing on stability and closed-loop infinite-horizon performance. The uncertainty arises from a parametric mismatch between the true and the estimated system under the matrix Frobenius norm. We examine a simple MPC controller that exclusively uses the estimated system model and establishes sufficient conditions under which the MPC controller can stabilize the true system. Moreover, we derive a theoretical performance bound based on relaxed dynamic programming, elucidating the impact of prediction horizon and modeling errors on the suboptimality gap between the MPC controller and the oracle infinite-horizon optimal controller with knowledge of the true system. Simulations of a numerical example validate the theoretical results. Our theoretical analysis offers guidelines for obtaining the desired modeling accuracy and choosing a proper prediction horizon to develop certainty-equivalent MPC controllers for uncertain linear systems.
\end{abstract}

\begin{keywords}
\cliu{Performance guarantees, Predictive control for linear systems, Optimal control, Uncertain systems}
\end{keywords}
\section{Introduction}
\label{sec:introduction}
Model predictive control (MPC) is an optimization-based control strategy that computes inputs to optimize a specific performance metric over a given prediction horizon based on a system model. MPC has found widespread application in various fields, such as chemical processes \cite{santander2016economic}, aerospace vehicles \cite{eren2017model}, and portfolio optimization \cite{dombrovskii2015model}. Regardless of the application, 
the effectiveness of MPC heavily depends on the accuracy of the prediction model. However, obtaining a perfect model is impossible due to inherent modeling errors in practice.

To address the issues that arise from having an imperfect model, adaptive MPC is commonly employed, integrating MPC with a system identification module \cite{adetola2009adaptive}. Typical approaches use the comparison error \cite{fukushima2007adaptive}, neural networks \cite{son2022learning}, and Bayesian inference \cite{dogan2023regret}. On the other hand, data-driven MPC \cite{berberich2020data} has also emerged as a promising method for handling model uncertainty by directly using input-output data. However, while much of the literature focuses on the stability, feasibility, and robustness of MPC, studies that provide performance analysis are limited.

Relaxed dynamic programming (RDP) is a notable framework for analyzing the performance of MPC controllers compared to that of the idealized infinite-horizon optimal control problem \cite{lincoln2006relaxing}. RDP quantifies the suboptimality gap by analyzing a general value function that describes the energy-decreasing characteristic along the closed-loop system trajectory \cite{giselsson2010adaptive, grune2008infinite}. For uncertain systems, recent extensions analyze the effect of disturbances for nonlinear systems \cite{schwenkel2020robust} and a class of parameterized linear systems where the true matrices of the system lie in a known polytope \cite{moreno2022performance}. However, performance analysis of MPC for general linear systems with modeling errors remains unexplored.

\textit{Contributions}: This work presents a novel analysis of MPC performance for linear systems with modeling errors. In contrast to the previous work in \cite{moreno2022performance}, we do not assume any specific parametric structure of the system nor adapt the system model online, adhering to the framework of model-based control with offline system identification. Moreover, our established bound is a \textit{consistent} extension of the bounds derived in \cite{grune2008infinite, kohler2021robust}, allowing it to recover the case without model mismatch. Using the RDP method, we establish a theoretical performance bound illustrating the impact of modeling errors on the closed-loop performance of the MPC controller. Furthermore, we provide sufficient conditions on the prediction horizon in the presence of modeling errors such that the closed-loop system is stable. This further reveals how closed-loop performance depends on modeling errors as well as the prediction horizon.

The remainder of the paper is organized as follows. After providing the preliminaries in Section \ref{sec:preliminaries}, we formulate the optimal control problems in Section \ref{sec:problem_formulation}. Section \ref{sec:theoretical_analysis} offers stability and performance analysis of the MPC controller, followed by a numerical example in Section \ref{sec:numerical example} to validate the theoretical results.
\section{Preliminaries}
\label{sec:preliminaries}

\subsection{Notation}
Let $x$ be a vector; then its transpose is denoted by $x^\top$, and its vector $i$-norm by $\|x\|_i$. For a matrix $M$, $M^\top$, $\|M\|_2$, $\rho(M)$ and $\|M\|_{\mathrm{F}}$ denote its transpose, matrix $2$-norm (i.e., spectral norm), spectral radius and Frobenius norm, respectively. Moreover, $M \geq 0 \;(x \geq 0)$ indicates element-wise nonnegativity, while $M \succ (\succeq) \;0 $ indicates positive (semi)definiteness. For a \textit{symmetric} matrix $M \succ 0$, its largest and smallest eigenvalues are, respectively, denoted by $\overline{\sigma}_M$ and $\underline{\sigma}_M$, and we further define $r_M := \frac{\overline{\sigma}_M}{\underline{\sigma}_M}$. For a vector $x$ and a positive symmetric (semi)definite matrix $M$, $\|x\|_M$ stands for $(x^\top M x)^{1/2}$.

The set of natural numbers is denoted by $\bN$, \cliu{the set of positive integers up to $n$ 
is denoted by $\mathbb{Z}_n^{+}$ with $\mathbb{Z}_{\infty}^{+}$ representing the set of all positive integers}, and the set of real and non-negative real numbers are denoted, respectively, by $\bR$ and $\bR_+$. We will sometimes use the \textit{bold} letter $\mathbf{x}$ to represent concatenation of a sequence of vectors $\{x_i\}$ as $\mathbf{x} = [x^\top_0, x^\top_1, \dots]^\top$, and $\mathbf{x}[i] := x_i$. For any two vectors (matrices) $x$ and $y$, $x \otimes y$ stands for their Kronecker product. Moreover, $\mathbf{0}_n$, $\mathds{1}_n$, and $I_n$ are the zero vector, one vector, and identity matrix of dimension $n$, respectively. \cliu{Finally, $\lceil\cdot\rceil$ is the standard ceiling operator (i.e., the least integer operator).}
\subsection{System Description \& Definitions}
We consider discrete-time linear time-invariant (LTI) systems given by
\begin{equation}
    \label{eq:sys_linear_true}
    x(t+1) = A x(t) + B u(t), \quad t \in \bN,
\end{equation}
where \cliu{$x(t) \in \cX = \bR^n$} is the state, $u(t) \in \cU \subseteq \bR^m$ is the input with $\cU$ being the input constraint set, and $A \in \bR^{n \times n}$ and $B \in \bR^{n \times m}$ are the matrices of the \textit{true} system. \cliu{We use the notation $x(t)$ ($u(t)$) to denote the true state (input) at time step $t$, whereas $x_t$ ($u_t$) represents the predicted state (input) and/or decision variables in optimization problems.} In this paper, we consider a stabilizable pair $(A,B)$, which is a standard assumption in the field \cite{moreno2022performance, boccia2014stability}. The set $\cU$ is described using a set of linear constraints as
\begin{equation}
    \label{eq:cons_u}
    \cU = \{u \in  \bR^m \mid  F_u u \leq \mathds{1}_{c_u} \},
\end{equation}
where $F_u \in \bR^{c_u \times m}$ with $c_u$ the number of input constraints. \cliu{Note that $\mathbf{0}_m \in \cU$}, and hence $\cU$ is nonempty. 
\cliu{For the model given in as \eqref{eq:sys_linear_true}}, the \textit{open-loop} predicted state, starting from any state $x \in \cX$ and being predicted $k$ steps forward under the control sequence \cliu{$\mathbf{u} = [u^\top_0, u^\top_1, \dots, u^\top_{k-1}]^\top$}, is denoted as $\otx(k,x,\bu)$. For linear systems characterized by $(A, B)$, it is commonly known that
\begin{equation}
    \label{eq:state_evolution}
    \otx(k,x,\bu) = A^{k}x + \sum^{k-1}_{i=0}A^{k-1-i}B\bu[i].
\end{equation}
On the other hand, given a state-feedback control law $\mu: \cX \to \cU$, we denote the corresponding \textit{closed-loop} predicted state, starting from any state $x \in \cX$ and being predicted $k$ steps forward, as $\ctx^{[\mu]}(k,x)$.
For the system described in \eqref{eq:sys_linear_true}, the controller only has access to an estimated system governed by the matrices $\hat{A} \in \cA(A, \eA) \subseteq \bR^{n \times n}$ and $\hat{B} \in \cB(B, \eB) \subseteq \bR^{n \times m}$, where the uncertainty sets $\cA(A, \eA)$ and $\cB(B, \eB)$ are defined as
\begin{subequations}
 \label{eq:uncertainty_set}
\begin{align}
    \label{eq:uncertainty_set_A}
    \cA(A, \eA) &= \{M \in  \bR^{n \times n} \mid  \|A - M\|_{\mathrm{F}} \leq \eA\},\\
    \label{eq:uncertainty_set_B}
    \cB(B, \eB) &= \{M \in  \bR^{n \times m} \mid \|B - M\|_{\mathrm{F}} \leq \eB\},
\end{align}
\end{subequations}
where the parameters $\eA \geq 0$ and $\eB \geq 0$ are characterized using system identification or machine learning techniques before initiating the control task. For the open-loop and closed-loop predicted state of the estimated system, we use the notation $\oex(k,x,\bu)$ and $\cex^{[\mu]}(k,x)$, respectively. \cliu{In addition, we impose the following standard assumption for linear systems:
\begin{assumption}
    The pairs $(A, B)$ and $(\hA, \hB)$ are both stabilizable.
\end{assumption}}
Finally, for convenience in the analysis in Section \ref{sec:theoretical_analysis}, we provide the following definition:
\begin{definition}[Error-consistent function]
\label{def:error_consistent}
    We call a function $\alpha(\cdot, \cdot): \bR^2_{+} \to \bR_{+}$ to be error-consistent if the following conditions hold:
    \begin{enumerate}
        \item $\alpha(\delta_1, \cdot)$ is non-decreasing for any $\delta_1 \in \bR^+$,
        \item $\alpha(\cdot, \delta_2)$ is non-decreasing for any $\delta_2 \in \bR^+$,
        \item $\alpha(\delta_1, \delta_2) = 0$ if and only if $\delta_1 = \delta_2 = 0$.
    \end{enumerate}
\end{definition}

\section{Problem Formulation}
\label{sec:problem_formulation}
We consider an infinite-horizon optimal control problem, in which the controller aims to generate an input sequence $\bu$ that stabilizes the system (i.e., steers the state to the origin) while minimizing the performance metric
\begin{equation}
\label{eq:performance_infinite}
    J_{\infty}(x, \bu_{\infty}) := \sum^\infty_{t=0} l(x_t, u_t),
\end{equation}
where $x_0 = x \in \cX$ and \cliu{$\mathbf{u}_{\infty} = [u^\top_0, u^\top_1, \dots, u^\top_{\infty}]^\top$}. In this work, we consider a quadratic stage cost given by $l(x, u) = \nQ{x} + \nR{u}$, where $Q \in \bR^{n \times n}$ and $R \in \bR^{m \times m}$ are symmetric matrices satisfying $Q, R \succ 0$, and we define $l^\ast(x) = \min_{u \in \cU}l(x, u) = \nQ{x}$. In addition, for any linear control law $u = \kappa(x) = Kx$, we define the local region $\Omega_{K} = \{x \in \cX \mid l^\ast(x) \leq \varepsilon_{K} \}$ with $\varepsilon_{K} > 0$ such that $Kx \in \cU$ for all $x \in \Omega_{K}$. \cliu{The maximum $\epsilon_K$ can be derived analytically, and} the explicit form of which is given as 
\begin{equation}
\label{eq:computation_ellipsoidal_radius}
    \varepsilon_K = \min_{i}\frac{1}{\|[F_uK]_{(i,:)}^\top\|^2_{Q^{-1}}},
\end{equation}
where $[F_uK]_{(i,:)}$ denotes the $i$-th row of the matrix $F_uK$.
Given the dynamics in \eqref{eq:sys_linear_true} and the input constraint set in \eqref{eq:cons_u}, we consider the following optimization problem:
        \begin{align*}
        \mathrm{P}_{\mathrm{IH-OCP}}: & \min_{\{u_{t}\}^{\infty}_{t = 0}} \sum^{\infty}_{t=0}\nQ{x_t} + \nR{u_t} \\
        \text{s.t. } & \; x_{t+1} = Ax_{t} + Bu_{t}, \forall t \in \bN, \\
        & \; u_{t} \in \mathcal{U}, \forall t \in \bN,\\
        & \; x_{0} = x(0). 
        \end{align*}
\cliu{We denote the optimal solution to the problem $\mathrm{P}_{\mathrm{IH-OCP}}$ by $\uit(x(0))$; the associated optimal value of the cost function is $\Vit(x(0)) := J_{\infty}\left(x(0), \uit(x(0))\right)$}.
\begin{assumption}
\label{ass:initial_controllability}
    The initial state $x(0)$ lies in the region of attraction $\xroa$ of the considered system in \eqref{eq:sys_linear_true} such that, given the input constraints in \eqref{eq:cons_u}, $\Vit(x(0))$ is \textit{finite} for all $x(0) \in \xroa$.
\end{assumption}
\begin{corollary}
\label{corollary:control_invariant_region_of_contraction}
    Under Assumption \ref{ass:initial_controllability}, $\xroa$ is a control invariant set, i.e., for all $x \in \xroa$, there exists $u \in \cU$ such that $Ax + Bu \in \xroa$.
\end{corollary}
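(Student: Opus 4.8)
The plan is to first characterize $\xroa$ explicitly from Assumption~\ref{ass:initial_controllability} as the set on which the infinite-horizon cost is finite, namely $\xroa = \{x \in \cX \mid \Vit(x) < \infty\}$, and then to establish invariance by propagating a single optimal step forward via the dynamic programming principle of optimality. Since $\cX = \bR^n$, the state-membership requirement is automatic, so the entire content of the claim reduces to preserving finiteness of $\Vit$ after one controlled step.

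First I would fix an arbitrary $x \in \xroa$, so that $\Vit(x) < \infty$ and the optimal sequence $\uit(x)$ is well defined, with every element in $\cU$ by feasibility of $\mathrm{P}_{\mathrm{IH\text{-}OCP}}$. I would then select the candidate input $u := \uit(x)[0] \in \cU$ and set $x^{+} := Ax + Bu$. The core step is to show $x^{+} \in \xroa$, i.e., $\Vit(x^{+}) < \infty$. For this I would construct the shifted tail of $\uit(x)$, which is feasible for the problem started at $x^{+}$: the trajectory it generates from $x^{+}$ coincides, by \eqref{eq:state_evolution}, with the tail of the trajectory generated by $\uit(x)$ from $x$, and each of its inputs lies in $\cU$. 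Its accumulated cost equals $\Vit(x) - l(x, u)$, which is finite and nonnegative, so
\begin{equation*}
    \Vit(x^{+}) \leq \Vit(x) - l(x, u) \leq \Vit(x) < \infty.
\end{equation*}
Hence $x^{+} \in \xroa$, and since $u \in \cU$ with $Ax + Bu = x^{+} \in \xroa$, control invariance follows.

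The main obstacle I anticipate is not the optimality bookkeeping but justifying that the infimum defining $\Vit(x)$ is actually attained, so that $\uit(x)$ and in particular its first element genuinely exist as a valid control choice. If attainment is not guaranteed \emph{a priori}, I would instead argue with an $\varepsilon$-suboptimal sequence: for any $\varepsilon > 0$ pick a feasible $\bu$ with $J_{\infty}(x, \bu) \leq \Vit(x) + \varepsilon < \infty$, apply its first element $u \in \cU$, and bound $\Vit(Ax + Bu) \leq J_{\infty}(x, \bu) - l(x, u) < \infty$, which still places $Ax + Bu$ in $\xroa$. Either way, the key mechanism is that the finiteness of the tail cost is inherited from the convergence of the full series, and it is precisely this inheritance that guarantees invariance.
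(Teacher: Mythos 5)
Your proof is correct: the paper states this corollary without any proof, treating it as an immediate consequence of Assumption~\ref{ass:initial_controllability}, and the dynamic-programming tail argument you give (apply the first element of an optimal or $\varepsilon$-suboptimal sequence, observe that the shifted tail is feasible from $x^{+}$ with cost $J_{\infty}(x,\bu) - l(x,u) < \infty$, hence $\Vit(x^{+}) < \infty$) is exactly the standard justification the authors implicitly rely on. Your handling of the attainment issue via $\varepsilon$-suboptimal sequences is a sensible precaution that the paper glosses over, and your observation that $\cX = \bR^n$ removes any state-membership obligation is consistent with the paper's setup.
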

However, due to the infinite nature of $\mathrm{P}_{\mathrm{IH-OCP}}$, computing the optimal input is intractable. \cliu{Therefore, at each time step $t$, we consider an truncated performance metric as}
\begin{equation}
\label{eq:performance_mpc}
    J_{N}(x(t), \bu_N) = \sum^{N}_{k=0} \nQ{x_{k|t}} + \nR{u_{k|t}},
\end{equation}
where $N \geq 1$ is the prediction horizon, $x_{k|t}$ and $u_{k|t}$ are, respectively, the $k$-step forward predicted state and input with $x_{0|t} = x(t)$ being the true state at time step $t$, \cliu{and $\bu_N = \{u^\top_{0|t}, u^\top_{1|t}, \dots, u^\top_{N-1|t}\}^\top$}. In this work, we consider the formulation without a terminal cost, as also done in \cite{kohler2021robust, muller2017quadratic}.
At each time step $t$, the \textit{ideal} MPC controller that has access to the true system solves the following optimization problem:
        \begin{align*}
        \mathrm{P}_{\mathrm{ID-MPC}}: & \min_{\{u_{k|t}\}^{N}_{k = 0}} \sum^{N}_{k=0} \left(\nQ{x_{k|t}} + \nR{u_{k|t}}\right) 
        \end{align*}
        \vspace{-0.35cm}
        \begin{align*}
        \text{s.t.} \; &\; x_{k+1|t} = Ax_{k|t} + Bu_{k|t}, \forall k \in \bZ^{+}_{N-1},  \\
        & \; u_{k|t} \in \mathcal{U}, \forall k \in \bZ^{+}_{N}, \\
         & \; x_{0|t} = x(t). 
        \end{align*}

\cliu{By solving the problem $\mathrm{P}_{\mathrm{ID-MPC}}$, the optimal value of $\bu_N$ is denoted by $\un(x(t))$}, and the corresponding value of the cost function is $\Vn(x(t)) := J_{N}(x(t), \un(x(t)))$. Moreover, the problem $\mathrm{P}_{\mathrm{ID-MPC}}$ implicitly defines an ideal MPC control law as $\mu_N(x(t)) := \un(x(t))[0]$. \cliu{In practice, however, a nominal} MPC controller can only rely on the \textit{estimated} system, and it instead solves the following optimization problem:
        \begin{align*}
        \mathrm{P}_{\mathrm{NM-MPC}}: & \min_{\{u_{k|t}\}^{N}_{k = 0}} \sum^{N}_{k=0} \left(\nQ{x_{k|t}} + \nR{u_{k|t}}\right) 
        \end{align*}
        \vspace{-0.35cm}
        \begin{align*}
        \text{s.t.} \; &\; x_{k+1|t} = \hA x_{k|t} + \hB u_{k|t}, \forall k \in \bZ^{+}_{N},  \\
        & \; u_{k|t} \in \mathcal{U}, \forall k \in \bZ^{+}_{N}, \\
         & \; x_{0|t} = x(t). 
        \end{align*}
Likewise, we denote the optimal solution to the problem $\mathrm{P}_{\mathrm{NM-MPC}}$ by $\hun(x(t))$, the value of the cost function follows as $\hVn(x(t)) := J_{N}(x(t), \hun(x(t)))$, and the real MPC control law is defined as $\hat{\mu}_N(x(t)) := \hun(x(t))[0]$.
\cliu{Starting from $x(0) = x$, we apply the nominal MPC controller recursively, and the resulting performance is}
\begin{equation}
    \label{eq:MPC_infinite_time_performace}
    J^{[\hmuN]}_{\infty}(x) = \sum^\infty_{t = 0}l\left(\ctx^{[\hmuN]}(t,x), \hmuN\big(\ctx^{[\hmuN]}(t,x)\big)\right)
\end{equation}
The main objectives of this paper are twofold: (i) to investigate under which conditions the MPC control law $\hmuN$ can stabilize the system and (ii) to quantify the closed-loop performance of the real MPC controller $J^{[\hmuN]}_{\infty}(x)$ relative to the \cliu{optimal infinite-horizon cost $\Vit (x)$ (i.e., the optimal value function $\Vit$ evaluated at $x(0) = x$)}.
\begin{remark}[Nullified input at stage $N$]
\label{remark:zero_final_input}
    For both the optimization problems $\mathrm{P}_{\mathrm{ID-MPC}}$ and $\mathrm{P}_{\mathrm{RE-MPC}}$, the optimal input satisfies $u^\ast_{N|t} = 0$ due to the positive definiteness of the quadratic cost. We include $u_{N|t}$ in our formulation to be consistent with the one without a terminal cost.
\end{remark}
\begin{remark}[State constraints \& region of attraction]
    In this work, we do not consider an explicit state constraint set $\cX$. However, in the presence of hard input constraints, stabilizing the system may not be possible for any arbitrary initial state $x(0) = x$, especially for unstable systems with $\rho(A) \geq 1$. Therefore, we impose Assumption \ref{ass:initial_controllability} for the validity of our work. Similar assumptions have been made in \cite{kohler2021stability} and \cite{kohler2023stability} using the notion of cost controllability. Characterizing $\xroa$ without knowing the true system is an open issue and is out of the scope of this paper.
\end{remark}

\section{Theoretical Analysis}
\label{sec:theoretical_analysis}
In this section, we provide an analysis of the stability and the closed-loop performance of the MPC controller with model mismatch. In Section \ref{subsec:mpc_value_function}, we establish a relation between the MPC value function $\hVn$ and the infinite-horizon optimal value function $\Vit$, leveraging the sensitivity analysis of quadratic programs (QPs). In Section \ref{subsec:stability_analysis}, we theoretically derive a performance bound using the RDP inequality. 

\subsection{Evaluation of the MPC value function}
\label{subsec:mpc_value_function}
We first establish an upper bound of $\hVn$ in terms of $\Vit$. Note that these two value functions are constructed using different dynamic models. We first state the main result and highlight its interpretations.
\begin{proposition}
\label{prop:bounding_mpc_cost}
    There exist two error-consistent functions $\alpha_N(\eA, \eB)$ and $\beta_N(\eA, \eB)$ such that, for all $x \in \xroa$, $\hVn$ and $\Vit$ satisfy the following inequality\footnote{The explicit dependence of $\alpha_N$ and $\beta_N$ on the prediction horizon is highlighted using the subscript $N$.}:
\begin{equation} \label{eq:bounding_relation_general} \hVn (x) \leq \big (1 + \alpha_N(\eA, \eB)\big) \Vit(x) + \beta_N(\eA, \eB),
\end{equation}
where functions $\alpha_N$ and $\beta_N$ relate to the eigenvalues and matrix norms of $\hA$, $\hB$, $Q$, $R$, the input constraint set $\cU$, but not to quantities derived from $A$ or $B$. Explicit expressions of $\alpha_N$ and $\beta_N$ are given in Appendix \ref{appendix:C--bounding_mpc}.
\end{proposition}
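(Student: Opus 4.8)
The plan is to route the comparison through the \emph{ideal} finite-horizon value $\Vn$ (the optimal value of $\mathrm{P}_{\mathrm{ID-MPC}}$), exploiting that $\mathrm{P}_{\mathrm{ID-MPC}}$ and $\mathrm{P}_{\mathrm{NM-MPC}}$ share the same input-constraint set and differ only in their prediction dynamics. First I would establish $\Vn(x)\le\Vit(x)$ for every $x\in\xroa$: truncating the infinite-horizon optimizer $\uit(x)$ to its first $N$ inputs and setting the $N$-th input to $0$ (admissible by Remark \ref{remark:zero_final_input}) produces a feasible point of $\mathrm{P}_{\mathrm{ID-MPC}}$ whose cost is a truncation of $J_\infty(x,\uit(x))=\Vit(x)$; since every stage cost is nonnegative this truncation is at most $\Vit(x)$, and optimality of $\Vn$ yields the bound. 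This step is what drives $\alpha_N=\beta_N=0$ in the error-free limit, making the bound a consistent extension of the no-mismatch case.

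The core step is to bound $\hVn(x)-\Vn(x)$. Feeding the ideal optimizer $\un(x)$ into $\mathrm{P}_{\mathrm{NM-MPC}}$ leaves every input in $\cU$, so $\un(x)$ is feasible there and $\hVn(x)\le\sum_{k=0}^{N}\big(\nQ{\oex(k,x,\un)}+\nR{\un[k]}\big)$. The input terms cancel against those in $\Vn(x)$, leaving $\hVn(x)-\Vn(x)\le\sum_{k=0}^{N}\big(\nQ{\oex(k,x,\un)}-\nQ{\otx(k,x,\un)}\big)$, which is governed by the state-prediction error $e_k:=\oex(k,x,\un)-\otx(k,x,\un)$. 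Applying \eqref{eq:state_evolution} to both dynamics gives $e_0=0$ and $e_{k+1}=\hA e_k+(\hA-A)\,\otx(k,x,\un)+(\hB-B)\,\un[k]$, hence $e_k=\sum_{j=0}^{k-1}\hA^{\,k-1-j}\big((\hA-A)\,\otx(j,x,\un)+(\hB-B)\,\un[j]\big)$. I would then bound $\|\hA-A\|_2\le\eA$ and $\|\hB-B\|_2\le\eB$ (the Frobenius norm dominates the spectral norm), bound each true state through the cost via $\ntwo{\otx(k,x,\un)}\le\sqrt{\Vit(x)/\downQ}$ (it is a single summand of $\Vn(x)\le\Vit(x)$), and bound the inputs uniformly over $\cU$. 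This makes the $\eB$-contribution to $e_k$ independent of $x$ while the $\eA$-contribution scales with $\sqrt{\Vit(x)}$, both weighted by the finite geometric sums $S_k:=\sum_{i=0}^{k-1}\|\hA\|_2^{\,i}$. Expanding $\nQ{\oex(k)}-\nQ{\otx(k)}=2\,\otx(k)^\top Q\,e_k+\nQ{e_k}$ and applying Cauchy--Schwarz with $\upQ,\downQ$ produces terms proportional to $\Vit(x)$, to $\sqrt{\Vit(x)}$, and to a constant; a Young-type split of the $\sqrt{\Vit(x)}$ terms then collapses everything into the form $\alpha_N(\eA,\eB)\,\Vit(x)+\beta_N(\eA,\eB)$. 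Every coefficient carries a positive power of $\eA$ or $\eB$, so both functions are non-decreasing in each argument and vanish exactly at $\eA=\eB=0$, i.e.\ they are error-consistent in the sense of Definition \ref{def:error_consistent}.

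The main obstacle is performing these estimates so that $\alpha_N$ and $\beta_N$ depend only on $\hA,\hB,Q,R,\cU$ and the radii $\eA,\eB$, never on $A$ or $B$: every residual occurrence of the true matrices must be eliminated by the triangle inequality (writing $A=\hA-(\hA-A)$, and likewise for $B$) before taking norms, which is what pins the growth to $\|\hA\|_2$ and, through $S_k$, to the horizon $N$. Because $\hA$ need not be Schur, these geometric sums may grow with $N$, so tracking them is the delicate part and is precisely the horizon dependence flagged by the subscript $N$ in the statement. As a final sanity check, when $\eA=\eB=0$ every added term vanishes and the chain reduces to $\hVn(x)=\Vn(x)\le\Vit(x)$, confirming consistency with the model-exact setting.
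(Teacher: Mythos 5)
Your proof is correct, but it takes a genuinely different route from the paper's at the decisive step. The paper expands $\hVn(x)$ along the estimated dynamics under the \emph{nominal} optimizer $\hun(x)$ and therefore has to control two mismatches at once: the dynamics error \emph{and} the input deviation $\du(x)=\un(x)-\hun(x)$. Handling the latter is what forces the paper through the QP reformulations of $\mathrm{P}_{\mathrm{ID-MPC}}$ and $\mathrm{P}_{\mathrm{NM-MPC}}$ and the sensitivity bound of Lemma~\ref{lm:sensitivity_qp_less_conservative} (yielding Lemma~\ref{lm:input_difference_bound}), plus Lemma~\ref{lm:zero_initial_propagation} to convert $\du(x)$ into a state deviation. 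You sidestep all of that with a suboptimality argument: since $\mathrm{P}_{\mathrm{NM-MPC}}$ has the same input constraints and no state constraints, $\un(x)$ is feasible for it, so $\hVn(x)$ is upper-bounded by the estimated-dynamics cost of $\un(x)$; the input costs then cancel exactly against $\Vn(x)$, leaving only the state-prediction error under a \emph{common} input. From there your error recursion $e_{k+1}=\hA e_k+(\hA-A)\otx(k,x,\un)+(\hB-B)\un[k]$ carries the same content as the paper's Lemma~\ref{lm:multi_step_prediction_error_bound}, and the Cauchy--Schwarz expansion plus the Young-type split $\sqrt{z}\le pz+q$ (the paper's Lemma~\ref{lm:square_root}) delivers the affine form with error-consistent coefficients depending only on $\hA$, $\hB$, $Q$, $R$, $\cU$. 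Since the proposition is an existence claim, your different (and generally non-comparable) $\alpha_N,\beta_N$ are perfectly acceptable; your route is shorter and avoids the $\underline{\sigma}_{\hHN}^{-1}$- and $\bar d_u$-dependent constants entirely, at the cost of not producing the explicit expressions \eqref{eq:alpha_and_beta} or the intermediate bound on $\|\du(x)\|_2$. One small point to make explicit: bounding the inputs uniformly over $\cU$ uses $\overline{u}=\max_{u\in\cU}\|u\|_2^2<\infty$, the same implicit boundedness of $\cU$ the paper relies on; alternatively you could bound $\|\un[j]\|_2$ through $\nR{\un[j]}\le\Vit(x)$, which would move that contribution into $\alpha_N$ instead of $\beta_N$.
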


The upper bound in \eqref{eq:bounding_relation_general} is consistent with the bound without model mismatch: $\Vn(x) \leq \Vit(x)$. In fact, if $\eA = \eB = 0$, $\hVn$ is identical to $\Vn$ and \eqref{eq:bounding_relation_general} degenerates to $\Vn(x) \leq \Vit(x)$ since both $\alpha_N$ and $\beta_N$ are error-consistent. Furthermore, $\alpha_N$ and $\beta_N$ are \textit{computable} because they do not depend on the matrix pair $(A, B)$ of the true system. 
We provide a proof sketch of Prop.~\ref{prop:bounding_mpc_cost} to highlight the main steps, and more details can be found in Appendix \ref{appendix:C--bounding_mpc}.

\noindent \textbf{Step 1):} Expanding $\hVn$ in terms of $\oex(\cdot,x,\hun(x))$ and $\hun(x)$. By definition, we have
\begin{equation*}
    \hVn(x) = \sum^{N}_{k = 0} \left(\nQ{\oex(k, x, \hun(x))} + \nR{\big(\hun(x)\big)[k]} \right).
\end{equation*}

\noindent \textbf{Step 2):} Keeping the input $\hun(x)$ unchanged and decomposing $\oex(\cdot,x,\hun(x))$ into $\otx(\cdot,x,\hun(x))$ (i.e., the open-loop predicted state using the true system model under input $\hun(x)$) and $e_{\psi}(\cdot) := \otx(\cdot,x,\hun(x)) - \oex(\cdot,x,\hun(x))$ (i.e., the open-loop prediction error under input $\hun(x)$). The remaining task is to derive an upper bound for $\nQ{e_{\psi}(\cdot)}$, which further requires bounding the terms $\ntwo{A^k - \hA^k}$ and $\ntwo{A^kB - \hA^k\hB}$ for $k \in \bZ^+_{N}$.

\noindent \textbf{Step 3):} Decomposing the input $\hun(x)$ into $\un(x)$ (i.e., the optimal solution of the problem $\mathrm{P}_{\mathrm{ID-MPC}}$) and $\du(x) := \un(x) - \hun(x)$. Similar to Step 2, the main difficulty is to derive an upper bound for $\ntwo{\du(x)}$. We first transform the problems $\mathrm{P}_{\mathrm{ID-MPC}}$ and $\mathrm{P}_{\mathrm{RE-MPC}}$, respectively, into their QP formulations, 
then apply the results in the sensitivity analysis of QPs to provide an upper bound as a function of $\delta_A$ and $\delta_B$.

\noindent \textbf{Step 4):} Decomposing $\otx(\cdot,x,\hun(x))$ into $\otx(\cdot,x,\un(x))$ (i.e., the open-loop predicted state using the true system model under input $\un(x)$) and $\otx(\cdot,0,\du(x))$ (i.e., the open-loop predicted state, starting from $x = 0$, using the true system model under input $\du(x)$). The core task is to obtain an upper bound for $\nQ{\otx(\cdot,0,\du(x))}$, which further reduces to obtaining an upper bound for $\nQ{\du(x)}$ based on linearity. Therefore, the intermediate results of Step 3 can be used again.

\cliu{Following the above steps, we can express $\hVn(x)$ in terms of $\un(x)$ and $\otx(\cdot,x,\un(x))$, which leads to a relationship between $\hVn(x)$ and $\Vn(x)$. Consequently, by using the fact that $\Vn(x) \leq \Vit(x)$, we can arrive at the desired inequality as in \eqref{eq:bounding_relation_general}.}
\subsection{Closed-loop Stability and Performance Analysis}
\label{subsec:stability_analysis}
Under the MPC control law $\hmuN$, at a given state $x$, the next-step state is computed as $\xp = Ax + B\hmuN(x)$. If there exists a so-called energy function $\tilde{V}: \bR^{n} \to \bR_+$ such that the RDP inequality
\begin{equation}
    \label{eq:rdp_prinstine}
    \tilde{V}(\xp) - \tilde{V}(x) \leq -\epsilon l(x, \hmuN(x))
\end{equation}
holds for all $x \in \xroa$, where $\epsilon \in (0,1]$, then the controlled system is asymptotically stable \cite{lincoln2006relaxing, grune2017nonlinear}. In this paper, we investigate the case where $\tilde{V} = \hVn$ and theoretically derive the coefficient $\epsilon$ as a function of the prediction horizon $N$ and the parameters $\eA$ and $\eB$ that quantify the model mismatch.

We first present a preliminary result that can be applied to both the true system and the estimated system. 
\begin{lemma}
\label{lm:local_stabilization_property}
    \cliu{Given a positive-definite quadratic stage cost $l(x,u) = \nQ{x} + \nR{u}$ and a local linear stabilizing control law $u = \kappa(x) = Kx$ for the system pair $(\hA, \hB)$}, there exist scalars \cliu{$\lambda_{K} \geq 1$} and $\rho_K \in (0,1)$ such that for all $x \in \Omega_{K}$ and $k \in \mathbb{N}$ we have
    \begin{equation}
        \label{eq:local_exponential}
        l(\cex^{[\kappa]}(k,x), K\cex^{[\kappa]}(k,x)) \leq C^\ast_{K} (\rho_K)^kl^\ast(x),
    \end{equation}
where \cliu{$C^\ast_{K} = \big(1+\underline{\sigma}^{-1}_Q\overline{\sigma}_R\|K\|^2_2\big)r_Q(\lambda_K)^2$}.
\end{lemma}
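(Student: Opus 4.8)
The plan is to exploit the linearity of the closed-loop estimated dynamics. Under the feedback $u = \kappa(x) = Kx$, the estimated system in closed loop reads $x_{k+1} = (\hA + \hB K)x_k = \hAcl x_k$, so the closed-loop predicted state has the explicit form $\cex^{[\kappa]}(k,x) = \hAcl^k x$. Since $\kappa$ is a stabilizing control law for $(\hA,\hB)$, the matrix $\hAcl$ is Schur stable, i.e.\ $\rho(\hAcl) < 1$, and the entire argument rests on converting this spectral-radius condition into a \emph{uniform} exponential decay of the matrix powers $\hAcl^k$.

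First I would bound the stage cost along the trajectory by a multiple of $\ntwo{\hAcl^k x}^2$. Applying the spectral-norm estimates $\nQ{y} \leq \upQ\,\ntwo{y}^2$ and $\nR{Ky} \leq \upR\,\ntwo{K}^2\,\ntwo{y}^2$ with $y = \hAcl^k x$ gives
\[
l\big(\cex^{[\kappa]}(k,x),\, K\cex^{[\kappa]}(k,x)\big) \leq \big(\upQ + \upR\,\ntwo{K}^2\big)\,\ntwo{\hAcl^k x}^2 .
\]
The key analytic ingredient is then the standard fact that, because $\rho(\hAcl) < 1$, Gelfand's formula (equivalently, a Jordan-form estimate) guarantees that for any $\bar{\rho} \in (\rho(\hAcl),1)$ there exists a constant $\lambda_K \geq 1$ with $\ntwo{\hAcl^k} \leq \lambda_K\,\bar{\rho}^{\,k}$ for all $k \in \bN$; evaluating at $k=0$ forces $\lambda_K \geq 1$ since $\ntwo{I_n} = 1$. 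Setting $\rho_K := \bar{\rho}^{\,2} \in (0,1)$ then yields $\ntwo{\hAcl^k x}^2 \leq \lambda_K^2\,(\rho_K)^k\,\ntwo{x}^2$.

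It remains to pass to $l^\ast(x) = \nQ{x}$ and to match the stated constant. Using $\ntwo{x}^2 \leq \downQ^{-1}\nQ{x}$, the above chain produces the coefficient $\big(\upQ + \upR\,\ntwo{K}^2\big)\downQ^{-1}\lambda_K^2 = \big(r_Q + \downQ^{-1}\upR\,\ntwo{K}^2\big)\lambda_K^2$. Since $r_Q = \upQ/\downQ \geq 1$, we have $\downQ^{-1}\upR\,\ntwo{K}^2 \leq \downQ^{-1}\upR\,\ntwo{K}^2\,r_Q$, so this coefficient is no larger than $C^\ast_K = \big(1 + \downQ^{-1}\upR\,\ntwo{K}^2\big)r_Q\,\lambda_K^2$, and \eqref{eq:local_exponential} follows. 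I do not anticipate a genuine obstacle here: the only non-elementary step is the exponential bound on $\ntwo{\hAcl^k}$, which is classical, while the rest is bookkeeping with norm equivalences. Note that the restriction $x \in \Omega_{K}$ is not actually needed for the inequality itself — it is an algebraic consequence of $\rho(\hAcl)<1$ valid for every $x$ — and serves only to guarantee that the feedback $K\cex^{[\kappa]}(k,x)$ remains in $\cU$ when the lemma is invoked later.
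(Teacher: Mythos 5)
Your proof is correct and follows essentially the same route as the paper's: closed-loop linearity $\cex^{[\kappa]}(k,x)=(\hA+\hB K)^k x$, Gelfand's formula to get $\|(\hA+\hB K)^k\|_2\leq\lambda_K(\sqrt{\rho_K})^k$, and eigenvalue-based norm equivalences to convert between $\|\cdot\|_2$, $\|\cdot\|_Q$, and $\|\cdot\|_R$; the paper merely packages the stage-cost bound as the matrix inequality $Q+K^\top RK\preceq(1+\underline{\sigma}_Q^{-1}\overline{\sigma}_R\|K\|_2^2)Q$ before applying the decay of the matrix powers, whereas you bound the two terms directly and obtain a slightly tighter constant that is then dominated by $C^\ast_K$. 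Your closing remark that $x\in\Omega_K$ is only needed for input admissibility, not for the inequality itself, is also consistent with the paper's argument.
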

\begin{proof}
    \cliu{The proof is closely related to the common results on exponential stability of linear systems, and it is given in Appendix \ref{appendix:B--Lemma_exponential decay} for completeness.}
\end{proof}

Based on Lemma \ref{lm:local_stabilization_property}, we have the following result about the properties of the open-loop system:
\begin{lemma}
\label{lm:terminal_state_exponential_bound}
    Given an upper bound $M_{\hat{V}}$ of $\hVn$ for all $x = x(0)$ and a stabilizing control law $u = Kx$ for the estimated system, there exist constants $L_{\widehat{V}} := \max\{\gamma, \frac{M_{\widehat{V}}}{\varepsilon_K}\}$ and $N_0 := \big\lceil \max\{0, \frac{M_{\widehat{V}}-\gamma \varepsilon_K}{\varepsilon_K}\}\big\rceil$ such that for $N \geq N_0$ we have
    \begin{subequations}
        \begin{align}
        \label{eq:ratio_bound_hatvN}
            & \hVn(x) \leq L_{\widehat{V}}l^\ast(x) \leq L_{\widehat{V}} l(x, \hmuN(x)) \\
        \label{eq:exponential_bound_final_stage}
            & \nQ{\oex(N, x, \hun(x))} \leq \gamma \rho^{N - N_0}_\gamma l(x, \hmuN(x)),
        \end{align}
    \end{subequations}
    \cliu{where $\varepsilon_{K}$ is defined as in \eqref{eq:computation_ellipsoidal_radius} using the gain $K$, $\gamma = C^\ast_K(1 - \rho_K)^{-1}$ with $C^\ast_{K}$ and $\rho_K$ given as in Lemma \ref{lm:local_stabilization_property}, and $\rho_{\gamma} = \gamma^{-1}(\gamma - 1)$.}
\end{lemma}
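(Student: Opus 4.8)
The plan is to prove the two inequalities separately, deferring the harder exponential estimate \eqref{eq:exponential_bound_final_stage} to last. The second inequality in \eqref{eq:ratio_bound_hatvN} is immediate, since $l^\ast(x) = \nQ{x} \leq \nQ{x} + \nR{\hmuN(x)} = l(x,\hmuN(x))$. For the first inequality I would argue by cases on whether $x$ lies in the terminal region $\Omega_K$. If $l^\ast(x) \leq \varepsilon_K$, i.e.\ $x \in \Omega_K$, then $u = Kx$ is a feasible (suboptimal) input for $\mathrm{P}_{\mathrm{NM-MPC}}$, so $\hVn(x)$ is upper bounded by the closed-loop cost of $\kappa$ on the estimated system; invoking Lemma \ref{lm:local_stabilization_property} and summing the geometric series $\sum_{k \geq 0}(\rho_K)^k = (1-\rho_K)^{-1}$ gives $\hVn(x) \leq C^\ast_K (1-\rho_K)^{-1} l^\ast(x) = \gamma\, l^\ast(x) \leq L_{\widehat{V}}\, l^\ast(x)$. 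If instead $l^\ast(x) > \varepsilon_K$, the global bound yields $\hVn(x) \leq M_{\widehat{V}} = (M_{\widehat{V}}/\varepsilon_K)\,\varepsilon_K \leq (M_{\widehat{V}}/\varepsilon_K)\, l^\ast(x) \leq L_{\widehat{V}}\, l^\ast(x)$. Since $L_{\widehat{V}} = \max\{\gamma,\, M_{\widehat{V}}/\varepsilon_K\}$ dominates both cases, \eqref{eq:ratio_bound_hatvN} follows.

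For \eqref{eq:exponential_bound_final_stage} I would track the optimal open-loop trajectory of $\mathrm{P}_{\mathrm{NM-MPC}}$. Write $x_k := \oex(k,x,\hun(x))$ and $u_k := \big(\hun(x)\big)[k]$, and let $w_j := \widehat{V}_{N-j}(x_j)$ denote the cost-to-go of the tail. I will use three facts repeatedly: by the principle of optimality $w_j = l(x_j,u_j) + w_{j+1}$, so $w_j$ is non-increasing in $j$ and $w_N = \widehat{V}_0(x_N) = l^\ast(x_N) = \nQ{\oex(N,x,\hun(x))}$; the argument of the first part, applied at $x_j$ with horizon $N-j$, gives the cost-controllability bound $w_j \leq \gamma\, l^\ast(x_j)$ whenever $x_j \in \Omega_K$; and $w_0 = \hVn(x) \leq M_{\widehat{V}}$.

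The crux is the following decay estimate: as soon as $w_j \leq \gamma \varepsilon_K$, one has $w_{j+1} \leq \rho_\gamma w_j$ (and hence $w_{j+1} \leq \gamma \varepsilon_K$ again, since $\rho_\gamma < 1$). Indeed, if $x_j \in \Omega_K$ then $l(x_j,u_j) \geq l^\ast(x_j) \geq w_j/\gamma$ by cost controllability, whereas if $x_j \notin \Omega_K$ then $l(x_j,u_j) \geq l^\ast(x_j) > \varepsilon_K \geq w_j/\gamma$; in both cases $w_{j+1} = w_j - l(x_j,u_j) \leq (1 - 1/\gamma)w_j = \rho_\gamma w_j$. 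It then remains to bound the first index $j^\ast$ with $w_{j^\ast} \leq \gamma \varepsilon_K$: while $w_j > \gamma \varepsilon_K$, the contrapositive of cost controllability forces $x_j \notin \Omega_K$, so $l(x_j,u_j) > \varepsilon_K$ and $w$ strictly decreases by more than $\varepsilon_K$ each step; starting from $w_0 \leq M_{\widehat{V}}$, a short computation shows $j^\ast \leq \big\lceil \max\{0,(M_{\widehat{V}} - \gamma \varepsilon_K)/\varepsilon_K\}\big\rceil = N_0$.

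Combining, for $N \geq N_0 \geq j^\ast$ the geometric decay gives $w_N \leq \rho_\gamma^{\,N - j^\ast} w_{j^\ast} \leq \gamma \varepsilon_K\, \rho_\gamma^{\,N - N_0}$. A final case split converts $\varepsilon_K$ into $l(x,\hmuN(x))$: if $x \notin \Omega_K$ then $l(x,\hmuN(x)) \geq l^\ast(x) > \varepsilon_K$ and we are done; if $x \in \Omega_K$ then $j^\ast = 0$ and $w_0 \leq \gamma\, l^\ast(x) \leq \gamma\, l(x,\hmuN(x))$, so $w_N \leq \rho_\gamma^{\,N} w_0 \leq \gamma\, \rho_\gamma^{\,N - N_0}\, l(x,\hmuN(x))$. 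Recalling $w_N = \nQ{\oex(N,x,\hun(x))}$ yields \eqref{eq:exponential_bound_final_stage}. I expect the decay step to be the main obstacle: because cost controllability only holds inside $\Omega_K$, one must show both that the trajectory reaches the set $\{w \leq \gamma \varepsilon_K\}$ within $N_0$ steps and that, once there, the one-step contraction $w_{j+1} \leq \rho_\gamma w_j$ persists even at indices where $x_j$ has temporarily left $\Omega_K$; the threshold $\gamma \varepsilon_K$ (rather than $\varepsilon_K$) is precisely what makes both halves of this argument close.
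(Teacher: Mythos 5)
Your proof is correct and follows essentially the same route as the paper: the paper's own proof is only a pointer to \cite{kohler2021stability}, and your argument---cost controllability $\widehat{V}_{N-j}(x_j)\le\gamma\, l^\ast(x_j)$ on $\Omega_K$ obtained by summing the geometric decay of Lemma \ref{lm:local_stabilization_property}, the one-step contraction $w_{j+1}\le\rho_\gamma w_j$ once $w_j\le\gamma\varepsilon_K$, and the counting argument forcing this to occur within $N_0$ steps---is precisely that procedure, with all details supplied. The only implicit step (shared with the paper's use of Lemma \ref{lm:local_stabilization_property}) is that the closed loop under $u=Kx$ must keep the trajectory inside $\Omega_K$ so that the candidate input sequence remains admissible.
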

\begin{proof}
    The proof follows a procedure similar to that in \cite[Theorem~5]{kohler2021stability} and is based on the lemma \ref{lm:local_stabilization_property}.
\end{proof}
For a stabilizable system, \eqref{eq:ratio_bound_hatvN} and \eqref{eq:exponential_bound_final_stage} provides an upper bound, respectively, for the value function $\hVn$ and the cost of the final state, both in terms of the stage cost. Moreover, \eqref{eq:exponential_bound_final_stage} implies that, given a sufficiently long horizon, the cost of the final state exponentially decays as the horizon increases. The \textit{critical horizon} $N_0$ quantifies the required number of steps such that the final state lies in the local region $\Omega_K$.

\begin{proposition}
\label{prop:energy_decreasing}
    There exist a constant $\eta_N$ and an error-consistent function $\xi_N$ satisfying $\xi_N(\delta_A, \delta_B) + \eta_N < 1$, and for all $x \in \xroa$ we have
    \begin{multline}
    \label{eq:energy_decreasing}
        \hVn(\xp) - \hVn(x) \leq \\
        -(1 - \xi_N(\delta_A, \delta_B) - \eta_N) l(x, \hat{\mu}_N(x)),
    \end{multline}
    where the function $\xi_N$ and constant $\eta_N$ relate to the eigenvalues and matrix norms of $\hA$, $\hB$, $Q$, $R$, the input constraint set $\cU$, but not to quantities derived from $A$ or $B$. The explicit expressions of $\xi_N$ and $\eta_N$ are given, respectively, in \eqref{eq:xi_N} and \eqref{eq:eta_N} in Appendix \ref{appendix:D--energy_decreasing}.
\end{proposition}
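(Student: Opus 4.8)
The plan is to establish the energy-decrease inequality \eqref{eq:energy_decreasing} through a relaxed dynamic programming argument built on a feasible (generally suboptimal) \emph{candidate} input sequence, while carefully isolating the extra error caused by the state actually evolving under the true dynamics. At the current state $x$, let $\hun(x) = \{u^\ast_0,\dots,u^\ast_{N-1},u^\ast_N\}$ be the nominal optimizer, so that $\hmuN(x) = u^\ast_0$ and, by Remark \ref{remark:zero_final_input}, $u^\ast_N = 0$. The nominal one-step-ahead state is $\hat{x}^+ := \hA x + \hB \hmuN(x)$, whereas the true successor is $\xp = Ax + B\hmuN(x)$; their gap $e_0 := \xp - \hat{x}^+ = (A-\hA)x + (B-\hB)\hmuN(x)$ obeys $\ntwo{e_0} \le \eA \ntwo{x} + \eB \ntwo{\hmuN(x)}$ by submultiplicativity and $\ntwo{\cdot} \le \nF{\cdot}$. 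The idea is to upper bound $\hVn(\xp)$ by the cost of one common candidate evaluated along \emph{both} initial conditions, splitting the decrease into a horizon-dependent constant $\eta_N$ (the usual finite-horizon term) and an error-consistent $\xi_N$ (the mismatch term).

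First I would treat the nominal part. Define the candidate $\bvn := \{u^\ast_1,\dots,u^\ast_{N-1}, K\,\oex(N,x,\hun(x)), 0\}$, i.e.\ the tail of $\hun(x)$ shifted by one step and completed with the local stabilizing gain $K$ of Lemma \ref{lm:local_stabilization_property} applied to the nominal terminal state. For $N \ge N_0$, Lemma \ref{lm:terminal_state_exponential_bound} guarantees $\oex(N,x,\hun(x)) \in \Omega_K$, hence $K\,\oex(N,x,\hun(x)) \in \cU$ and $\bvn$ is feasible. Evaluating $\bvn$ from $\hat{x}^+$ reproduces the shifted nominal trajectory, so the cost telescopes to
\begin{equation*}
 J_N(\hat{x}^+,\bvn) - \hVn(x) = -l(x,\hmuN(x)) + \nR{K\,\oex(N,x,\hun(x))} + \nQ{(\hA+\hB K)\oex(N,x,\hun(x))}.
\end{equation*}
Both appended terms are bounded by a constant (depending on $K$, $Q$, $R$, $\hA$, $\hB$) times $\ntwo{\oex(N,x,\hun(x))}^2 \le \underline{\sigma}_Q^{-1}\nQ{\oex(N,x,\hun(x))}$, which by \eqref{eq:exponential_bound_final_stage} is at most $\underline{\sigma}_Q^{-1}\gamma\rho_\gamma^{N-N_0} l(x,\hmuN(x))$. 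Collecting these yields a constant $\eta_N \propto \gamma\rho_\gamma^{N-N_0}$ with $J_N(\hat{x}^+,\bvn) \le \hVn(x) - (1-\eta_N) l(x,\hmuN(x))$, which decays in $N$ and recovers the model-free finite-horizon bound.

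Next I would quantify the mismatch. Applying the \emph{same} open-loop sequence $\bvn$ from $\xp$ preserves feasibility (the inputs are fixed vectors) and, by linearity of the nominal predictor, gives $\oex(k,\xp,\bvn) = \oex(k,\hat{x}^+,\bvn) + \hA^k e_0$. Expanding $\nQ{\oex(k,\xp,\bvn)}$ and summing over $k$ then produces
\begin{equation*}
 J_N(\xp,\bvn) - J_N(\hat{x}^+,\bvn) = \sum_{k=0}^N \Big( 2\,\oex(k,\hat{x}^+,\bvn)^\top Q\,\hA^k e_0 + \nQ{\hA^k e_0} \Big).
\end{equation*}
The quadratic terms are bounded by $\overline{\sigma}_Q \ntwo{\hA^k}^2 \ntwo{e_0}^2$, and the cross terms by Cauchy--Schwarz (or Young's inequality) in terms of $\big(\sum_k \nQ{\oex(k,\hat{x}^+,\bvn)}\big)^{1/2}$ and $\ntwo{e_0}$; the former is controlled through $\hVn(x) \le L_{\widehat{V}} l(x,\hmuN(x))$ from \eqref{eq:ratio_bound_hatvN}, while $\ntwo{e_0}$ carries factors $\eA,\eB$ together with $\ntwo{x}^2 \le \underline{\sigma}_Q^{-1} l(x,\hmuN(x))$ and $\ntwo{\hmuN(x)}^2 \le \underline{\sigma}_R^{-1} l(x,\hmuN(x))$. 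Since every contribution is proportional to a positive power of $\eA$ or $\eB$ and multiplies $l(x,\hmuN(x))$, the whole difference is bounded by $\xi_N(\eA,\eB)\, l(x,\hmuN(x))$ with $\xi_N$ error-consistent in the sense of Definition \ref{def:error_consistent}. Combining $\hVn(\xp) \le J_N(\xp,\bvn)$ with the two bounds above then yields \eqref{eq:energy_decreasing}.

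I expect the main obstacle to be this mismatch step: the cross terms $2\,\oex(k,\hat{x}^+,\bvn)^\top Q\,\hA^k e_0$ couple the (possibly large) nominal trajectory with the small perturbation $e_0$, so a naive estimate would scale with $\hVn(x)$ rather than with $l(x,\hmuN(x))$; the key is to route every factor back to $l(x,\hmuN(x))$ via \eqref{eq:ratio_bound_hatvN}--\eqref{eq:exponential_bound_final_stage} so that $\xi_N$ emerges error-consistent and horizon-aware. A secondary point is that the inequality $\xi_N(\eA,\eB)+\eta_N<1$ is not automatic: it holds once $N$ is large enough to force $\eta_N<1$ (through the exponential decay $\rho_\gamma^{N-N_0}$) and the modeling errors $\eA,\eB$ are small enough to keep $\xi_N$ below the resulting margin, which is exactly the interplay between horizon length and model accuracy that the proposition is meant to expose.
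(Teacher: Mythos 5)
Your proposal is correct and follows essentially the same route as the paper: a one-step-shifted candidate sequence, the linearity identity $\oex(k,\xp,\bvn) = \oex(k+1,x,\hun(x)) + \hA^k \Delta x$ (the paper's Lemma \ref{lm:trajectory_difference}, with your $e_0$ equal to the paper's $\Delta x$), Cauchy--Schwarz on the cross terms, Lemma \ref{lm:terminal_state_exponential_bound} for the $\eta_N$ part, and the prediction-error bound $\|\Delta x\|_2^2 \le h(\eA,\eB)\, l(x,\hmuN(x))$ for the $\xi_N$ part. The only deviation is your terminal completion input $K\,\oex(N,x,\hun(x))$, where the paper's main proof takes $u_f = 0$ and its appendix extension takes the feedback on the perturbed state; this changes only the constants in $\eta_N$ and $\omega_{N,(\cdot)}$, not the structure of the argument.
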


In Proposition \ref{prop:energy_decreasing}, the relation in \eqref{eq:energy_decreasing}, serving as the RDP inequality for the closed-loop systems (cf. \eqref{eq:rdp_prinstine}), provides a lower bound of the energy decrease in terms of the stage cost. Moreover, based on $\xi_N(\delta_A, \delta_B) + \eta_N < 1$, a \textit{sufficient} condition on the prediction horizon and modeling error can be derived such that the closed-loop system is stable. We summarize this condition in the following corollary:
\begin{corollary}
\label{corollary:small_mismatch}
Given a sufficiently long prediction horizon $N$ satisfying
\cliu{
\begin{equation}
\label{eq:horizon_requirements_original}
    N > N_0 + \frac{\log\left(\|\hA\|^2_2r_Q\gamma\right)}{\log(\rho_{\gamma}^{-1})},
\end{equation}
where $N_0$, $\gamma$ and $\rho_\gamma$ are defined as in Lemma \ref{lm:terminal_state_exponential_bound} using gain $K$, the closed-loop system is asymptotically stable if the modeling error is small enough such that}
\begin{equation}
    \hspace*{-2ex} h(\eA,\eB) \sless \left\{\frac{-\omega_{N,(\frac{1}{2})} \splus [\omega^2_{N,(\frac{1}{2})} \splus \omega_{N,(1)}(1\sminus \eta_N)]^{\frac{1}{2}}}{\omega_{N,(1)}} \right\}^2 \hspace*{-1ex}\scm 
\end{equation}
where $h$ is defined as in \eqref{eq:prediction_error_bound_h_function} in Lemma \ref{lm:prediction_error_bound}, $\eta_N$ is given as in \eqref{eq:eta_N}, and $\omega_{N,(1)}$ and $\omega_{N,(\frac{1}{2})}$ are given, respectively, as in \eqref{eq:omega_1} and \eqref{eq:omega_0.5}.
\end{corollary}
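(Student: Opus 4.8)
The plan is to obtain the corollary as a direct consequence of the relaxed dynamic programming (RDP) inequality established in Proposition \ref{prop:energy_decreasing}. Rewriting \eqref{eq:energy_decreasing} in the form of the generic RDP inequality \eqref{eq:rdp_prinstine} with $\tilde{V} = \hVn$ identifies the relaxation coefficient as $\epsilon := 1 - \xi_N(\eA,\eB) - \eta_N$. Since $\xi_N$ and $\eta_N$ are nonnegative, we automatically have $\epsilon \leq 1$, so the only remaining requirement for asymptotic stability is strict positivity $\epsilon > 0$, i.e.\ $\xi_N(\eA,\eB) + \eta_N < 1$. The two hypotheses of the corollary---the lower bound on $N$ and the upper bound on $h(\eA,\eB)$---are precisely what guarantee this inequality, and I would establish their sufficiency in two separate steps. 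Once $\epsilon \in (0,1]$ is secured, asymptotic stability follows from the cited RDP results \cite{lincoln2006relaxing, grune2017nonlinear}; here $\hVn$ qualifies as a valid energy function because it is sandwiched between positive-definite functions via $l^\ast(x) \leq \hVn(x) \leq L_{\widehat{V}}\, l(x,\hmuN(x))$ from Lemma \ref{lm:terminal_state_exponential_bound}.

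First I would show that the horizon condition \eqref{eq:horizon_requirements_original} is equivalent to $\eta_N < 1$. The constant $\eta_N$ in \eqref{eq:eta_N} inherits its $N$-dependence from the terminal-cost bound \eqref{eq:exponential_bound_final_stage}, in which $\nQ{\oex(N,x,\hun(x))}$ decays like $\gamma\,\rho_\gamma^{N-N_0}$; after scaling by the factor $\|\hA\|^2_2\, r_Q$ that arises when propagating this final-stage cost through the dynamics, $\eta_N$ takes the form $\|\hA\|^2_2\, r_Q\, \gamma\, \rho_\gamma^{N-N_0}$. Requiring this product to lie strictly below $1$ and solving for $N$ (using $\rho_\gamma \in (0,1)$, so that $\log \rho_\gamma^{-1} > 0$) yields exactly the threshold $N_0 + \log(\|\hA\|^2_2 r_Q \gamma)/\log(\rho_\gamma^{-1})$ appearing in \eqref{eq:horizon_requirements_original}. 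Hence the horizon condition guarantees $1 - \eta_N > 0$.

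Next I would reduce the condition $\xi_N(\eA,\eB) + \eta_N < 1$ to a scalar quadratic inequality. Reading off the explicit expression of $\xi_N$ in \eqref{eq:xi_N}---which, following the prediction-error and input-perturbation bounds of Step~2 and Step~3 in the proof of Proposition \ref{prop:bounding_mpc_cost}, is quadratic in $\sqrt{h(\eA,\eB)}$, namely $\xi_N = \omega_{N,(1)}\, h(\eA,\eB) + 2\,\omega_{N,(\frac{1}{2})}\,\sqrt{h(\eA,\eB)}$---and substituting $t := \sqrt{h(\eA,\eB)} \geq 0$, the requirement becomes
\begin{equation*}
\omega_{N,(1)}\, t^2 + 2\,\omega_{N,(\frac{1}{2})}\, t - (1 - \eta_N) < 0.
\end{equation*}
Because $\omega_{N,(1)} > 0$ this parabola opens upward, and because $1 - \eta_N > 0$ (from the first step) its constant term is negative, so it possesses exactly one nonnegative root $t^{+}$ given by the quadratic formula. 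The inequality then holds for $0 \leq t < t^{+}$, and squaring back to $h = t^2$ reproduces verbatim the bound stated in the corollary. Combining the two steps gives $\xi_N(\eA,\eB) + \eta_N < 1$, hence $\epsilon \in (0,1]$, and asymptotic stability follows.

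Since the heavy lifting---the sensitivity-based cost bound of Proposition \ref{prop:bounding_mpc_cost} and the RDP decrease of Proposition \ref{prop:energy_decreasing}---is already in place, the main obstacle for the corollary itself is bookkeeping rather than new analysis. I expect the delicate point to be verifying the sign and finiteness conditions that legitimize the quadratic-root argument: that $\omega_{N,(1)}$ is strictly positive and finite for the chosen horizon (these coefficients themselves carry $N$-dependence through the accumulated prediction-error and QP-sensitivity bounds), and that the threshold \eqref{eq:horizon_requirements_original} forces the strict inequality $\eta_N < 1$ rather than a weaker one. Once one confirms that $\xi_N$ and $\eta_N$ from \eqref{eq:xi_N} and \eqref{eq:eta_N} assemble exactly into the quadratic $\omega_{N,(1)} t^2 + 2\,\omega_{N,(\frac{1}{2})} t - (1-\eta_N)$ whose positive root is the claimed bound, the conclusion is immediate.
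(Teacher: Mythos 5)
Your proposal is correct and follows the same route the paper intends (the paper leaves the corollary's proof implicit, noting only that it follows from requiring $\xi_N(\eA,\eB)+\eta_N<1$ in Proposition \ref{prop:energy_decreasing}): the horizon condition \eqref{eq:horizon_requirements_original} is exactly $\eta_N<1$ solved for $N$ using $\rho_\gamma\in(0,1)$, and the error condition is the unique nonnegative root of the upward-opening quadratic $\omega_{N,(1)}t^2+2\omega_{N,(\frac{1}{2})}t-(1-\eta_N)$ in $t=\sqrt{h}$. Your side checks ($\omega_{N,(1)}\geq\overline{\sigma}_Q G_N(\hA)>0$, strictness of $\eta_N<1$) are the right ones and all hold.
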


Informally, Corollary \ref{corollary:small_mismatch} indicates that the model mismatch should be small enough to ensure the stability of the closed-loop system if the MPC control law is derived from the estimated model instead of the true model. Next, we provide a proof sketch of Proposition \ref{prop:energy_decreasing}. The main part of the proof is to establish the upper bound for $\hVn(\xp) - \hVn(x)$ in terms of $l(x, \hmuN(x))$, and it consists of four steps:

\noindent \textbf{Step 1):} Expanding $\hVn(\xp)$ in terms of $\hun(\xp)$ and $\oex(\cdot,\xp,\hun(\xp))$, and expanding $\hVn(x)$ in terms of $\hun(x)$ and $\oex(\cdot,x,\hun(x))$.

\noindent \textbf{Step 2):} Constructing an auxiliary input $\mathbf{v}_N(x)$ satisfying $\big(\mathbf{v}_N(x)\big)[i-1] = \big(\hun(x)\big)[i]$ for $i \in \bZ^+_{N-1}$, $\big(\mathbf{v}_N(x)\big)[N-1] = 0$, and $\big(\mathbf{v}_N(x)\big)[N] = 0$, and then relaxing $\hVn(\xp)$ as $J_N(\xp, \mathbf{v}_N(x))$. This auxiliary input has two benefits when computing the difference $J_N(\xp, \mathbf{v}_N(x)) - \hVn(x)$:
\begin{enumerate}
    \item The input-incurred cost $\nR{\big(\mathbf{v}_N(x)\big)[i-1]}$ and $\nR{\big(\hun(x)\big)[i]}$ cancel each other;
    \item We can prove that $\oex(i-1,\xp,\mathbf{v}_N(x)) = \hA^{i-1} \Delta x + \oex(i,x,\hun(x))$ for all $i \in \bZ^+_{N}$, where $\Delta x = (A - \hA)x + (B - \hB)\hmuN(x)$ is the one-step-ahead prediction error under the MPC control law $\hmuN$.
\end{enumerate}

\noindent \textbf{Step 3):} Reorganizing the terms obtained from Step 2 and applying Lemma \ref{lm:quadratic_norm}, we build a relation between $\nQ{\hA^{i-1} \Delta x}$ and $l(x, \hmuN(x))$. 

\noindent \textbf{Step 4):} Using the property given in Remark \ref{remark:zero_final_input}, the remaining term is $\nQ{\oex(N-1,\xp,\mathbf{v}_N(x))}+\nR{\big(\mathbf{v}_N(x)\big)[N-1]} + \nQ{\oex(N,\xp,\mathbf{v}_N(x))} - \nQ{\oex(N, x, \hun(x))}$, which can be further bounded using Lemma \ref{lm:local_stabilization_property} and Lemma \ref{lm:terminal_state_exponential_bound}.

The final infinite-horizon performance guarantee is stated in the following theorem:
\begin{theorem}
Given $\alpha_N$ and $\beta_N$ as in Proposition \ref{prop:bounding_mpc_cost}, and $\xi_N$ and $\eta_N$ as in Proposition \ref{prop:energy_decreasing}, we have
    \begin{equation}
    \label{eq:final_performance_bound}
        J^{[\hmuN]}_{\infty}(x) \leq \frac{1 + \alpha_N}{1 - \xi_N - \eta_N}\Vit(x) + \frac{\beta_N}{1 - \xi_N - \eta_N},
    \end{equation}
where $J^{[\hmuN]}_{\infty}(x)$ is defined in \eqref{eq:MPC_infinite_time_performace}.
\end{theorem}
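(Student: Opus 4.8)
The plan is to invoke the standard relaxed dynamic programming (RDP) argument, using $\hVn$ as the energy function. The two ingredients are already in place: the energy-decrease inequality \eqref{eq:energy_decreasing} of Proposition \ref{prop:energy_decreasing}, and the value-function bound \eqref{eq:bounding_relation_general} of Proposition \ref{prop:bounding_mpc_cost}. Writing $x_t := \ctx^{[\hmuN]}(t,x)$ for the closed-loop trajectory, so that $x_0 = x$ and each successor $x_{t+1} = A x_t + B\hmuN(x_t)$ is the map $\xp$ evaluated at $x_t$, I would introduce the decrease coefficient $\epsilon_N := 1 - \xi_N(\eA,\eB) - \eta_N$, which is strictly positive by Proposition \ref{prop:energy_decreasing}. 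Then \eqref{eq:energy_decreasing} reads $\epsilon_N\, l(x_t,\hmuN(x_t)) \leq \hVn(x_t) - \hVn(x_{t+1})$ at each step where $x_t \in \xroa$.

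First I would sum this inequality over $t = 0, \dots, T-1$. The right-hand side telescopes to $\hVn(x_0) - \hVn(x_T)$, and since $\hVn$ is a finite sum of nonnegative stage costs it is nonnegative, so dropping $-\hVn(x_T) \leq 0$ gives $\epsilon_N \sum_{t=0}^{T-1} l(x_t,\hmuN(x_t)) \leq \hVn(x)$. The partial sums are monotone in $T$ (each stage cost is nonnegative) and uniformly bounded by $\hVn(x)/\epsilon_N$, which is finite because Proposition \ref{prop:bounding_mpc_cost} together with Assumption \ref{ass:initial_controllability} ensures $\hVn(x) < \infty$ on $\xroa$; hence the series converges and, letting $T \to \infty$, I obtain $\epsilon_N\, J^{[\hmuN]}_{\infty}(x) \leq \hVn(x)$. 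To finish, I would substitute the bound $\hVn(x) \leq (1+\alpha_N)\Vit(x) + \beta_N$ from \eqref{eq:bounding_relation_general} and divide through by $\epsilon_N = 1 - \xi_N - \eta_N > 0$ to recover exactly the claimed estimate \eqref{eq:final_performance_bound}. As a byproduct, the summability just established forces $l(x_t,\hmuN(x_t)) \to 0$ and hence $x_t \to 0$, which is consistent with the asymptotic stability of Corollary \ref{corollary:small_mismatch}.

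The main obstacle is not the algebra but the validity of the telescoping step: inequality \eqref{eq:energy_decreasing} is only asserted for $x \in \xroa$, so I must guarantee that the \emph{entire} closed-loop trajectory $\{x_t\}_{t \geq 0}$ stays in $\xroa$. I would establish this by an induction on $t$ combined with an invariance argument: starting from $x_0 \in \xroa$ and using $\hVn(x_{t+1}) \leq \hVn(x_t)$ (an immediate consequence of \eqref{eq:energy_decreasing} with $\epsilon_N > 0$), the trajectory remains inside the sublevel set $\{z : \hVn(z) \leq \hVn(x)\}$. The delicate point is to show that this sublevel set is contained in $\xroa$, which I would argue from the control invariance of $\xroa$ in Corollary \ref{corollary:control_invariant_region_of_contraction} together with the Lyapunov-decrease property of $\hVn$; once this containment holds, $x_t \in \xroa$ for all $t$ and each application of \eqref{eq:energy_decreasing} is justified, closing the argument.
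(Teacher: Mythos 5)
Your proposal is correct and follows essentially the same route as the paper: telescoping the RDP inequality \eqref{eq:energy_decreasing}, dropping the nonnegative term $\hVn(\ctx^{[\hmuN]}(T,x))$, letting $T \to \infty$, substituting \eqref{eq:bounding_relation_general}, and dividing by $1 - \xi_N - \eta_N$. Your additional care about keeping the closed-loop trajectory inside $\xroa$ is a point the paper leaves implicit, and your substitution step is actually cleaner than the paper's \eqref{eq:proof_final_bound}, which drops the ``$1+$'' in front of $\alpha_N$.
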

\begin{proof}
    The proof resembles that of \cite[Prop.~2.2]{grune2008infinite}\cliu{, and we provide it for completeness}. By performing a telescopic sum of \eqref{eq:energy_decreasing}, for any $T \in \bN$, we have
    \begin{multline}
    \label{eq:telesum_original}
        (1 - \xi_N - \eta_N)\sum^{T-1}_{t = 0}l\left(\ctx^{[\hmuN]}(t,x), \hmuN\big(\ctx^{[\hmuN]}(t,x)\big)\right) \\
        \leq \hVn(x) - \hVn(\ctx^{[\hmuN]}(T,x)).
    \end{multline}
    Taking $T \to \infty$, using the performance definition in \eqref{eq:MPC_infinite_time_performace} and $0 \leq \hVn(\ctx^{[\hmuN]}(\infty,x))$, \eqref{eq:telesum_original} yields
    \begin{equation}
    \label{eq:telesum_reformed}
        (1 - \xi_N - \eta_N)J^{[\hmuN]}_{\infty}(x) \leq  \hVn(x).
    \end{equation}
    Then, substituting the bound in \eqref{eq:bounding_relation_general} into \eqref{eq:telesum_reformed} leads to
    \begin{equation}
    \label{eq:proof_final_bound}
        (1 - \xi_N - \eta_N)J^{[\hmuN]}_{\infty}(x) \leq \alpha_N \Vit(x) + \beta_N,
    \end{equation}
    which gives the final bound in \eqref{eq:final_performance_bound} after dividing both sides by the constant $1 - \xi_N - \eta_N$.
\end{proof}

Given that $\alpha_N$, $\beta_N$, and $\xi_N$ are all error-consistent, the worst-case performance bound as in \eqref{eq:final_performance_bound} will increase if the modeling error becomes larger (cf. the non-decreasing property of the error-consistent functions in Def.~\ref{def:error_consistent}). Besides, if the model is perfect, the final performance bound will degenerate to
\begin{equation}
\label{eq:degenerated_performance_bound}
    J^{[\muN]}_{\infty}(x) \leq \frac{1}{1 - \eta_N}\Vit(x).
\end{equation}
Note that \eqref{eq:degenerated_performance_bound} is a variant of the bounds given in \cite{grune2008infinite, kohler2021stability} without modeling error.

\section{Numerical Example}
\label{sec:numerical example}
Consider the true linear system of the form \eqref{eq:sys_linear_true} specified by
\begin{equation*}
    A = 
    \begin{bmatrix}
        1 & 0.7 \\
        0.12 & 0.4
    \end{bmatrix},\quad
    B = 
    \begin{bmatrix}
        1 \\
        1.2
    \end{bmatrix},
\end{equation*}
with the input constraint set given in \eqref{eq:cons_u} with $F_u = [10, -10]^\top$. It should be noted that this considered numerical example is challenging to analyze since it is unstable ($\rho(A) = 1.1171 > 1$), meaning that the allowed level of modeling error and the prediction horizon are restricted to ensure $\xi_N + \eta_N < 1$. Now we analyze its infinite-horizon performance based on the theoretical results. For the quadratic objective, the matrices $Q$ and $R$ are given by
\begin{equation*}
    Q = 
    \begin{bmatrix}
        2 & 0 \\
        0 & 2
    \end{bmatrix},\quad
    R = 1.
\end{equation*}
To simplify the simulation, we consider $\delta_A = \delta_B = \delta$, and we investigate the behavior of the derived performance bound when $\delta$ varies between $10^{-3}$ and $10^{-2}$ for a fixed prediction horizon. For a given level of modeling error $\delta \in [10^{-3}, 10^{-2}]$, we simulate $100$ different estimated systems that are generated randomly and satisfy \eqref{eq:uncertainty_set}, and for each of the them, its corresponding feedback gain $K$ as in Lemma \ref{lm:local_stabilization_property} is designed using LQR with the tuple $(\hA, \hB, Q, R)$. The code used for the simulations in this paper is available on GitHub.\footnote{See \url{https://github.com/lcrekko/lq\_mpc}} The behavior of $\alpha_N$, $\beta_N$, $\xi_N$, and $J_{\text{bound}}$ (the bound given in \eqref{eq:final_performance_bound}) as a function of $\delta$ is shown in Fig.~\ref{fig:error_variation}.
\begin{figure}[h]
    \centering
    \includegraphics[width = 0.48\textwidth]{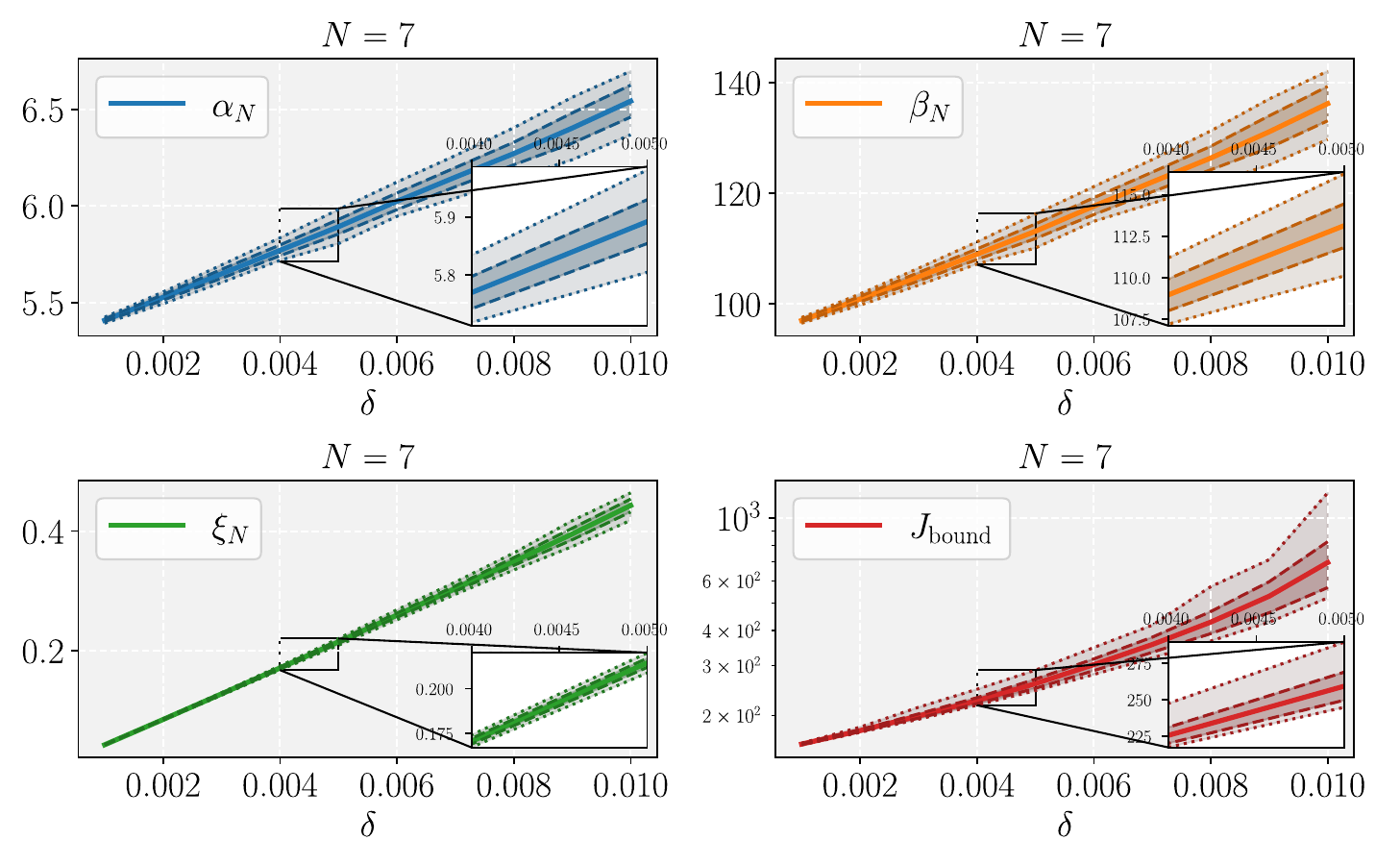}
    \caption{The behavior of $\alpha_N$ (upper left), $\beta_N$ (upper right), $\xi_N$ (lower left), and $J_{\text{bound}}$ (lower right) for a varying modeling error $\delta \in [10^{-3}, 10^{-2}]$. For a given error level $\delta$, $100$ estimated systems are simulated, and the mean, variance, and max (min) of each of the four quantities are shown, respectively, using a solid line, dashed lines, and dotted lines.}
    \label{fig:error_variation}
\end{figure}
The results in Fig. \ref{fig:error_variation} indicate that an increased modeling error leads to a larger difference between the value functions $\widehat{V}_N$ and $V_\infty$ (by having larger values of $\alpha_N$ and $\beta_N$), a mitigated energy-decreasing property (by having a larger value of $\xi_N$), and thus an increased worst-case performance bound. 

On the other hand, given a specific value of $\delta$, we also simulated the behavior of the four quantities $\alpha_N$, $\beta_N$, $\xi_N$, and $J_{\text{bound}}$ when varying the prediction horizon, and the results are presented in Fig. \ref{fig:horizon_variation}. According to Fig. \ref{fig:horizon_variation}, as the prediction horizon is extended, the error accumulates, which is reflected in the increasing behavior of $\alpha_N$, $\beta_N$, and $\xi_N$. However, the performance bound is not necessarily a monotonous function of the prediction horizon since $\eta_N$ decreases as $N$ increases. For $\delta=5\cdot 10^{-3}$, $N = 7$ is the prediction horizon that achieves the smallest worst-case performance bound.
\begin{figure}[h]
    \centering
    \includegraphics[width = 0.48\textwidth]{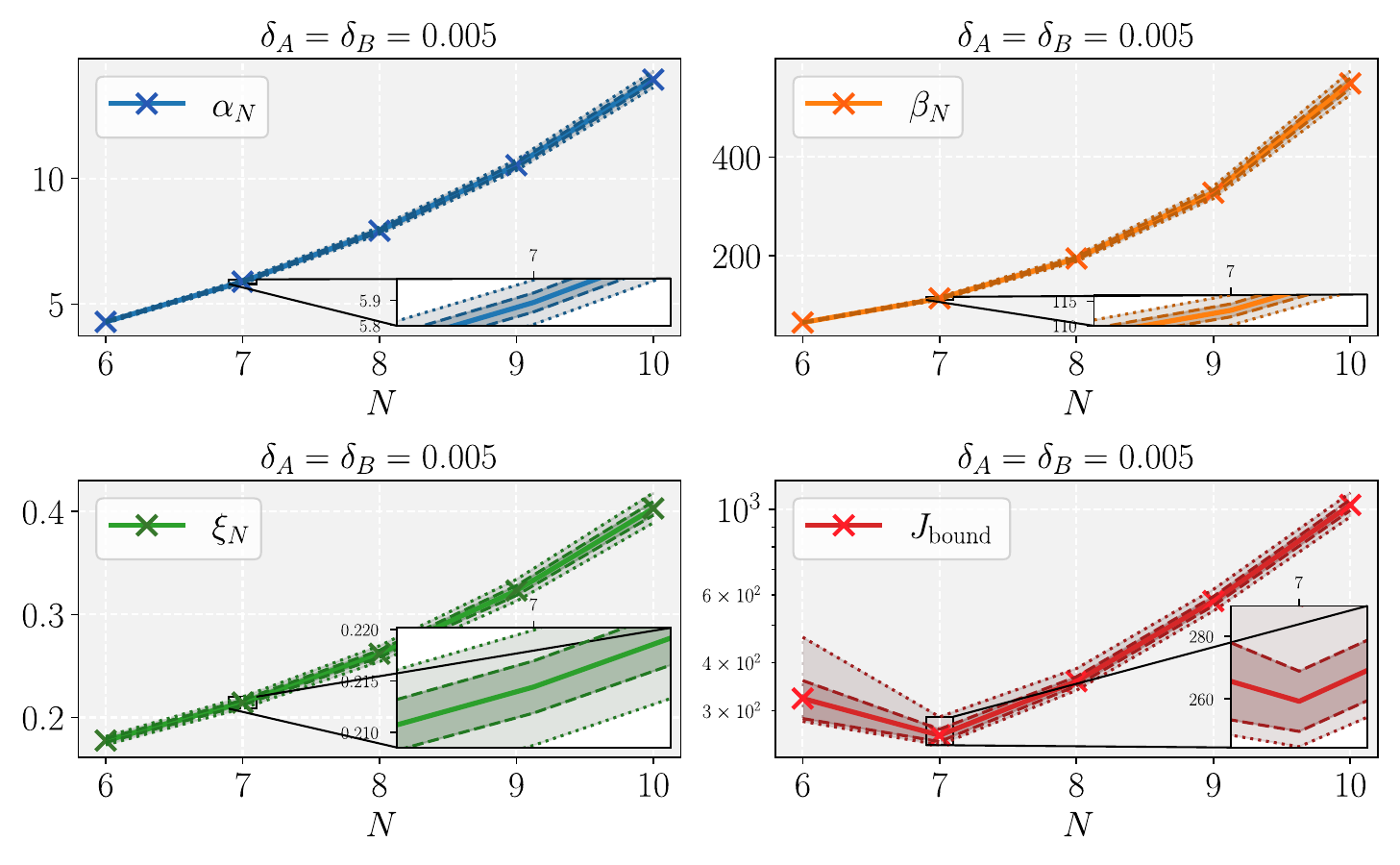}
    \caption{The behavior of $\alpha_N$ (upper left), $\beta_N$ (upper right), $\xi_N$ (lower left), and $J_{\text{bound}}$ (lower right) for a varying prediction horizon $N \in \{6,7,8,9,10\}$. For the specified $\delta = 5\cdot 10^{-3}$, $100$ estimated systems are simulated, and the mean, variance, and max (min) of each of the four quantities are shown, respectively, using a solid line, dashed lines, and dotted lines.}
    \label{fig:horizon_variation}
\end{figure}
For this numerical example, the optimal cost $V_\infty = 0.20229$, and the true closed-loop MPC cost varies between from $0.20229 + 5\cdot10^{-5}$ to $0.20229 + 1.1\cdot10^{-4}$. In Fig. \ref{fig:true_cost}, the true cost of the certainty-equivalent MPC controller is shown, revealing that our performance bound is conservative. This conservatism primarily arises from repeatedly applying various inequalities, leading to the accumulation of relaxation errors. 
In general, RDP-based theoretical performance analysis suffers from conservatism (see also \cite{kohler2023stability, giselsson2013feasibility}) since the RDP inequality only provides a lower bound on the decreased energy of the closed-loop system (cf. \eqref{eq:rdp_prinstine}).
\begin{figure}[h]
    \centering
    \includegraphics[width = 0.48\textwidth]{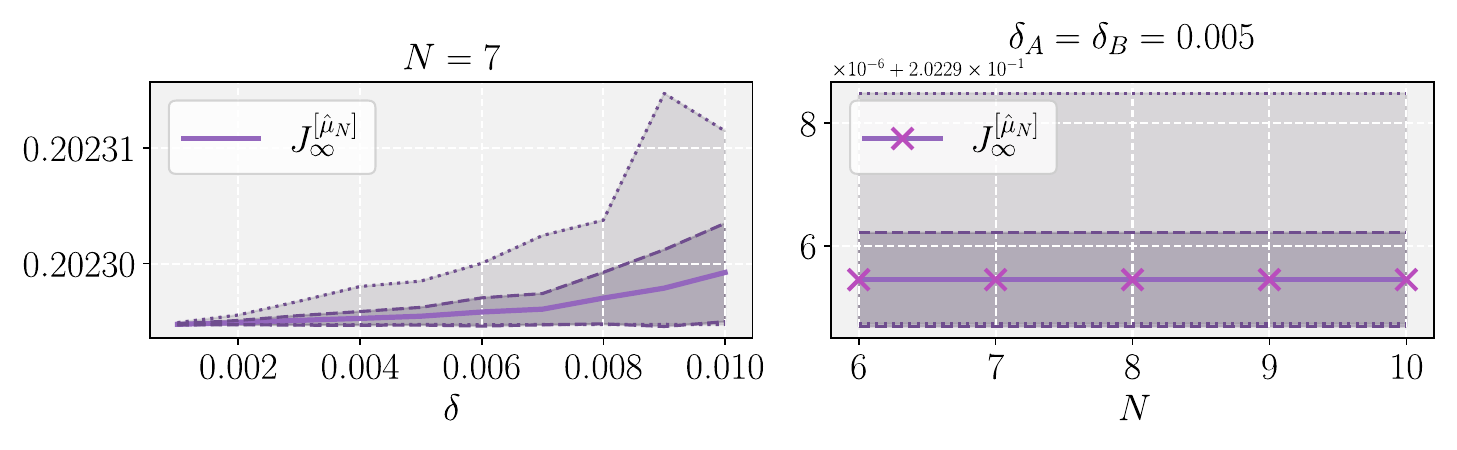}
    \caption{The behavior of the true performance as a function of the modeling error (left) and prediction horizon (right). For a given error level $\delta$, $100$ estimated systems are simulated, and the mean, variance, and max (min) of each of the four quantities are shown, respectively, using a solid line, dashed lines, and dotted lines.}
    \label{fig:true_cost}
\end{figure}
Furthermore, the optimal prediction horizon deduced using the worst-case performance bound is \textit{not} necessarily the optimal prediction horizon $N_{\text{OPT}}$ that achieves the best performance, which is reflected in the right subplot of Fig. \ref{fig:true_cost} where we observe that the true performance does not vary significantly when the horizon changes. This discrepancy is due to that facts i) that $N_{\text{OPT}}$ depends on the linear feedback gain $K$ that is chosen subject to user preference and ii) that the derivation based on the energy-decreasing property in \eqref{eq:rdp_prinstine} is only sufficient but not necessary. However, our theoretical analysis still highlights the tension between choosing a long horizon for optimality and a short horizon for a lower prediction error, and thus the conclusions can still provide design insights for choosing a proper prediction horizon of certainty-equivalent MPC.

\section{Conclusions}
\label{sec:conclusion}
This paper has provided stability and closed-loop performance analyses of MPC for uncertain linear systems. We have derived a performance bound quantifying the suboptimality gap between a certainty-equivalent MPC controller and the ideal infinite-horizon optimal controller with access to the true system model. Additionally, we have established a sufficient condition on the prediction horizon and the model mismatch to ensure the stability of the closed-loop system. Furthermore, our analysis reveals how the prediction horizon and model mismatch jointly influence optimality. These insights offer valuable guidance for designing and implementing MPC controllers for uncertain linear systems in terms of achieving a desired level of identification error and choosing a suitable prediction horizon for guaranteed performance. Potential future directions include extending the existing analysis framework to MPC with terminal costs and analyzing the performance of adaptive MPC and learning-based MPC that learns the model online.






\bibliographystyle{IEEEtran}
\bibliography{4-references/additional_reference,4-references/basic_references,4-references/learning,4-references/network,4-references/oco,4-references/intro_used,4-references/ourpaper}

\useRomanappendicesfalse 
\appendix
\subsection{Matrix Inequalities}
\label{appendix:A--quadratic_norm}

\begin{lemma}
    \label{lm:difference_power_single}
    Given matrices $M_1, M_2 \in \bR^{n\times n}$ and any well-posed matrix norm $\|\cdot\|$ that possesses sub-additivity and sub-multiplicativity, we have
    \begin{equation}
        \label{eq:difference_power_single}
        \|M^i_1 - M^i_2\| \leq (\delta_M + \|M_2\|)^{i} - \|M_2\|^{i},
    \end{equation}
    where $i \in \bN$ and $\|M_1 - M_2\| \leq \delta_M$.
\end{lemma}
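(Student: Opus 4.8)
The plan is to prove the bound by induction on $i$, reducing the matrix problem to two elementary ingredients: the triangle inequality together with submultiplicativity to split the powers, and the classical factorization of a difference of scalar powers to recognize the resulting sum. Throughout I will write $a := \|M_2\|$ and $c := \|M_2\| + \delta_M$, so that $c - \delta_M = a$, and the target bound is simply $c^i - a^i$.

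First I would dispose of the base cases. For $i = 0$ both sides vanish ($M_1^0 - M_2^0 = I - I = 0$ and $c^0 - a^0 = 0$), and for $i = 1$ the claim reduces to $\|M_1 - M_2\| \leq \delta_M$, which is the hypothesis. For the inductive step the key algebraic observation is the decomposition
\[
M_1^{i+1} - M_2^{i+1} = M_1\big(M_1^i - M_2^i\big) + (M_1 - M_2)M_2^i,
\]
which isolates the lower-order difference $M_1^i - M_2^i$ that the induction hypothesis already controls. Applying subadditivity and submultiplicativity then gives
\[
\|M_1^{i+1} - M_2^{i+1}\| \leq \|M_1\|\,\|M_1^i - M_2^i\| + \delta_M\,\|M_2\|^i.
\]

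Next I would bound the stray factor $\|M_1\|$ with subadditivity once more, namely $\|M_1\| = \|M_2 + (M_1 - M_2)\| \leq \|M_2\| + \delta_M = c$. Substituting the induction hypothesis $\|M_1^i - M_2^i\| \leq c^i - a^i$ and the relation $c - \delta_M = a$, the right-hand side collapses telescopically:
\[
c\,(c^i - a^i) + \delta_M\,a^i = c^{i+1} - (c - \delta_M)\,a^i = c^{i+1} - a^{i+1},
\]
which is exactly the claimed bound for exponent $i+1$, completing the induction.

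I do not anticipate a genuine obstacle here, as the argument is elementary once the decomposition above is chosen; the only point meriting care is to invoke \emph{precisely} the two assumed properties of the norm (subadditivity for both $\|M_1\| \leq \|M_2\| + \delta_M$ and the triangle step, submultiplicativity for splitting the products), so that the lemma remains valid for any such norm — in particular the spectral and Frobenius norms used later. As an alternative one may avoid induction entirely by expanding $M_1^i - M_2^i = \sum_{j=0}^{i-1} M_1^j (M_1 - M_2) M_2^{i-1-j}$, bounding each summand by $c^j\,\delta_M\,a^{i-1-j}$, and invoking the scalar identity $c^i - a^i = (c - a)\sum_{j=0}^{i-1} c^j a^{i-1-j}$ with $c - a = \delta_M$; I would present whichever version is shorter in the final write-up.
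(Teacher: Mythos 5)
Your proof is correct, and your primary route (induction on $i$) is genuinely different from the paper's. The paper argues directly for each $i\geq 1$ by factoring the difference of powers as a telescoping sum, bounding each summand $\|M_1^{i-1-j}M_2^{j}\|$ by $(\delta_M+\|M_2\|)^{i-1-j}\|M_2\|^{j}$ via $\|M_1\|\leq\|M_2\|+\delta_M$, and then recognizing $\delta_M\sum_{j=0}^{i-1}(\delta_M+\|M_2\|)^{i-1-j}\|M_2\|^{j}$ as the factored form of $(\delta_M+\|M_2\|)^{i}-\|M_2\|^{i}$ --- i.e., exactly the non-inductive alternative you sketch in your last paragraph. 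Your induction buys a small but real advantage: the only algebraic identity it needs is the one-step splitting $M_1^{i+1}-M_2^{i+1}=M_1(M_1^{i}-M_2^{i})+(M_1-M_2)M_2^{i}$, which is trivially verified for non-commuting matrices, whereas the sum-expansion route requires writing the telescoping identity in its correct non-commutative form $M_1^{i}-M_2^{i}=\sum_{j=0}^{i-1}M_1^{j}(M_1-M_2)M_2^{i-1-j}$ (as you do); the paper instead writes it as $(M_1-M_2)\bigl(M_1^{i-1}+M_1^{i-2}M_2+\cdots+M_2^{i-1}\bigr)$, a grouping that is only an identity when $M_1$ and $M_2$ commute, so your version is actually the more carefully stated of the two. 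Both approaches use precisely the same two norm properties and yield the identical bound.
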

\begin{proof}
For $i = 0$, we have $\|M^i_1 - M^i_2\| = \|I_n - I_n\| = 0$ and $(\delta_M + \|M_2\|)^{i} - \|M_2\|^{i} = 1 - 1 = 0$, indicating $\|M^i_1 - M^i_2\| = (\delta_M + \|M_2\|)^{i} - \|M_2\|^{i}$. On the other hand, for $i \geq 1$, we have
    \begin{align*}
        & \|M^i_1 - M^i_2\| \\
        = & \|(M_1 - M_2)(M^{i-1}_1 + M^{i-2}_1M_2 + \cdots + M_2^{i-1})\| \\
        = & \|M_1 - M_2\|\sum^{i-1}_{j=0}\|M^{i-j-1}_1M^j_2\| \\
        = & \|M_1 - M_2\|\sum^{i-1}_{j=0}\|(M_1 - M_2) + M_2\|^{i-j-1}\|M_2\|^j\\
        \leq & \delta_M \sum^{i-1}_{j=0}(\delta_M + \|M_2\|)^{i-j-1}\|M_2\|^j \\
        \leq & \big[(\delta_M +\|M_2\|)-\|M_2\|\big]\sum^{i-1}_{j=0}(\delta_M + \|M_2\|)^{i-j-1}\|M_2\|^j \\
        \leq & (\delta_M +\|M_2\|)^{i} - \|M_2\|^i,
    \end{align*}
which completes the proof.
\end{proof}

\begin{lemma}
    \label{lm:difference_power_double}
    Given matrices $M_1, M_2 \in \bR^{n\times n}$, $N_1, N_2 \in \bR^{n\times m}$, and any well-posed matrix norm $\|\cdot\|$ that possesses sub-additivity and sub-multiplicativity, we have
    \begin{multline}
        \label{eq:difference_power_double}
        \|M^i_1N_1 - M^i_2N_2\| \leq \delta_N \|M_2\|^i + \\ 
        + (\delta_N + \|N_2\|)\big[(\delta_M + \|M_2\|)^{i} - \|M_2\|^{i}\big],
    \end{multline}
    where $i \in \bN$, $\|M_1 - M_2\| \leq \delta_M$, and $\|N_1 - N_2\| \leq \delta_N$.
\end{lemma}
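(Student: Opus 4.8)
The plan is to reduce the two-factor difference $\|M_1^i N_1 - M_2^i N_2\|$ to the single-factor power difference $\|M_1^i - M_2^i\|$, which is already controlled by Lemma~\ref{lm:difference_power_single}, by inserting a suitable telescoping cross term. I would first dispose of the base case $i = 0$ directly: since $M_1^0 = M_2^0 = I_n$, the left side is $\|N_1 - N_2\| \leq \delta_N$, while the right side evaluates to $\delta_N \|M_2\|^0 + (\delta_N + \|N_2\|)(1-1) = \delta_N$, so the inequality holds. This mirrors the separate treatment of $i=0$ in the proof of Lemma~\ref{lm:difference_power_single}.

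For $i \geq 1$, I would add and subtract $M_2^i N_1$ to obtain
\begin{equation*}
    M_1^i N_1 - M_2^i N_2 = M_2^i(N_1 - N_2) + (M_1^i - M_2^i)N_1,
\end{equation*}
choosing this grouping deliberately (rather than $M_1^i(N_1 - N_2) + (M_1^i - M_2^i)N_2$) because it lands on the right-hand side in exactly the stated form. Applying sub-additivity and sub-multiplicativity, and using $\|M_2^i\| \leq \|M_2\|^i$ (valid for $i \geq 1$), yields
\begin{equation*}
    \|M_1^i N_1 - M_2^i N_2\| \leq \|M_2\|^i \|N_1 - N_2\| + \|M_1^i - M_2^i\|\,\|N_1\|.
\end{equation*}

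It then remains to substitute the three available bounds: $\|N_1 - N_2\| \leq \delta_N$ by hypothesis; $\|N_1\| \leq \|N_1 - N_2\| + \|N_2\| \leq \delta_N + \|N_2\|$ by the triangle inequality; and $\|M_1^i - M_2^i\| \leq (\delta_M + \|M_2\|)^i - \|M_2\|^i$ directly from Lemma~\ref{lm:difference_power_single}, whose hypothesis $\|M_1 - M_2\| \leq \delta_M$ is in force. Collecting these terms gives precisely \eqref{eq:difference_power_double}.

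This argument poses no genuine obstacle. The only points requiring care are the choice of cross-term grouping (the alternative split produces an algebraically equal but differently organized bound) and the separate, trivial handling of $i = 0$, since $\|M_2^0\| \leq \|M_2\|^0$ can fail for norms with $\|I_n\| > 1$, such as the Frobenius norm.
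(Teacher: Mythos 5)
Your proof is correct and follows essentially the same route as the paper: insert the cross term $M_2^i N_1$, group as $M_2^i(N_1-N_2) + (M_1^i-M_2^i)N_1$, bound $\|N_1\|$ by $\delta_N + \|N_2\|$ via the triangle inequality, and invoke Lemma~\ref{lm:difference_power_single} for $\|M_1^i - M_2^i\|$. Your separate treatment of $i=0$ is a small added care the paper omits here (its use of $\|M_2^0\|\leq\|M_2\|^0$ is indeed loose for norms with $\|I_n\|>1$, though the $i=0$ case holds regardless since $M_1^0-M_2^0=0$).
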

\begin{proof}
    \begin{align*}
         & \|M^i_1N_1 - M^i_2N_2\| \\
       = & \|M^i_1N_1 - M^i_2N_1 + M^i_2N_1 - M^i_2N_2\| \\
       = & \|M^i_2(N_1 - N_2) + (M^i_1 - M^i_2)N_1\| \\
       \leq & \|M_2\|^i\|N_1 - N_2\| + \|M^i_1 - M^i_2\|\|(N_1 - N_2) + N_2\| \\
       \leq & \delta_N\|M_2\|^i + (\delta_N + \|N_2\|)\|M^i_1 - M^i_2\| \\
       \leq & \delta_N \|M_2\|^i + (\delta_N + \|N_2\|)\big[(\delta_M + \|M_2\|)^{i} - \|M_2\|^{i}\big],
    \end{align*}
where the last inequality is due to Lemma \ref{lm:difference_power_single}.
\end{proof}

\begin{lemma}
\label{lm:power_difference}
    Given the matrices $\hA$ and $\hB$ satisfying $\hat{A} \in \cA(A, \eA) \subseteq \bR^{n \times n}$ and $\hat{B} \in \cB(B, \eB) \subseteq \bR^{n \times m}$, where $\cA(A, \eA)$ and $\cB(B, \eB)$ are defined as in \eqref{eq:uncertainty_set}, we have, for $i \in \bN$, 
    \begin{subequations}
        \begin{multline}
        \label{eq:difference_power_A}
             \|A^i - \hA^i\|_2 \leq (\eA + \ntwo{\hA})^{i} - (\ntwo{\hA})^{i},
        \end{multline}
        \begin{multline}
        \label{eq:difference_power_AB}
            \|A^iB - \hA^i\hB\|_2 \leq \eB(\ntwo{\hA})^i  \\
            + (\eB + \ntwo{\hB})\big[(\eA + \ntwo{\hA})^{i} - (\ntwo{\hA})^{i}\big].
        \end{multline}
    \end{subequations}
\end{lemma}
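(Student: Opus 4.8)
The plan is to obtain both inequalities as immediate specializations of Lemma \ref{lm:difference_power_single} and Lemma \ref{lm:difference_power_double}, instantiated with the matrix $2$-norm. The heavy lifting (the telescoping factorization of $M_1^i - M_2^i$ together with the geometric-sum estimates) has already been discharged in the proofs of those two lemmas, so the only genuine task here is to verify that their hypotheses hold for the choices $M_1 = A$, $M_2 = \hA$, $N_1 = B$, $N_2 = \hB$. Concretely, I must confirm that $\ntwo{A - \hA} \leq \eA$ and $\ntwo{B - \hB} \leq \eB$, and that $\ntwo{\cdot}$ is a well-posed submultiplicative and subadditive matrix norm.

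First I would record the standard norm comparison $\ntwo{M} \leq \nF{M}$, valid for any matrix $M$. Combined with the definition of the uncertainty sets in \eqref{eq:uncertainty_set}, which impose the Frobenius-norm radii $\nF{A - \hA} \leq \eA$ and $\nF{B - \hB} \leq \eB$, this at once yields $\ntwo{A - \hA} \leq \eA$ and $\ntwo{B - \hB} \leq \eB$. Since the spectral norm is submultiplicative and subadditive, the structural assumptions of both auxiliary lemmas are met, and the roles of the radius parameters become $\delta_M = \eA$ and $\delta_N = \eB$.

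For \eqref{eq:difference_power_A}, I would then invoke Lemma \ref{lm:difference_power_single} with $M_1 = A$, $M_2 = \hA$, the norm $\ntwo{\cdot}$, and $\delta_M = \eA$; its conclusion $\ntwo{M_1^i - M_2^i} \leq (\delta_M + \ntwo{M_2})^i - \ntwo{M_2}^i$ is verbatim the claimed bound. For \eqref{eq:difference_power_AB}, I would invoke Lemma \ref{lm:difference_power_double} with the same $M_1, M_2$ together with $N_1 = B$, $N_2 = \hB$, $\delta_M = \eA$, and $\delta_N = \eB$; substituting these choices into \eqref{eq:difference_power_double} reproduces the stated right-hand side exactly.

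There is no substantive obstacle beyond the norm-comparison step, which is the single point worth flagging explicitly: the model mismatch in \eqref{eq:uncertainty_set} is quantified in the Frobenius norm, whereas the lemma is phrased in the spectral norm. The elementary inequality $\ntwo{\cdot} \leq \nF{\cdot}$ is precisely what bridges this gap, guaranteeing that the Frobenius-norm radii $\eA$ and $\eB$ remain valid as spectral-norm bounds and thus licensing the application of the two preceding lemmas.
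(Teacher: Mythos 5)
Your proposal is correct and follows essentially the same route as the paper, which likewise obtains both inequalities by directly instantiating Lemma \ref{lm:difference_power_single} and Lemma \ref{lm:difference_power_double} with $A$, $\hA$, $B$, $\hB$ under the spectral norm. Your explicit remark that $\ntwo{\cdot} \leq \nF{\cdot}$ is needed to convert the Frobenius-norm radii of \eqref{eq:uncertainty_set} into valid spectral-norm bounds is a useful clarification that the paper leaves implicit.
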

\begin{proof}
    Applying Lemma \ref{lm:difference_power_single} to $A$ and $\hA$ directly leads to \eqref{eq:difference_power_A}. Likewise, \eqref{eq:difference_power_AB} is a direct consequence of applying Lemma \ref{lm:difference_power_double} to the set of matrices $A$, $B$, $\hA$ and $\hB$.
\end{proof}

Based on \eqref{eq:difference_power_A} and \eqref{eq:difference_power_AB}, we define two error-consistent functions $g^{(n)}_{i,(x)}$ and $g^{(n)}_{i,(u)}$ for all $n \in \bN_+$ as
\begin{subequations}
\label{eq:function_g}
    \begin{align}
        & \hspace{-0.2cm} g^{(n)}_{i,(x)}(\eA, \cdot) = \big[(\eA + \ntwo{\hA})^{i} - (\ntwo{\hA})^{i}\big]^n, \\
        & \hspace{-0.2cm} g^{(n)}_{i,(u)}(\eA, \eB) \seq \bigg\{(\eB + \ntwo{\hB})g^{(1)}_{i,(x)}(\eA, \cdot)  \notag \\
        & \hspace*{30ex} + \eB(\ntwo{\hA})^i\bigg\}^n \spd
    \end{align}
\end{subequations}

\begin{lemma}
\label{lm:difference_cross_product}
Given matrices $M_1, M_2 \in \bR^{p\times q}$, $\Xi \in \bR^{q \times n}$, $N_1, N_2 \in \bR^{n \times m}$, and any well-posed matrix norm $\|\cdot\|$ that possesses sub-additivity and sub-multiplicativity, we have
\begin{multline}
    \|M_1\Xi N_1 - M_2 \Xi N_2\| \leq \|\Xi\|\|M_1 - M_2\|\|N_1 - N_2\|  \\
    + \|M_1\Xi\|\|N_1 - N_2\| + \|\Xi N_1\|\|M_1 - M_2\|
\end{multline}
\begin{proof}
\cliu{The proof consists of simple algebraic manipulations, which we present for completeness.}
    \begin{align*}
        & \|M_1\Xi N_1 - M_2 \Xi N_2\| \\
       = &  \|M_1\Xi N_1 - M_1\Xi N_2 + M_1\Xi N_2 - M_2 \Xi N_2\| \\
       \leq & \|M_1\Xi (N_1 - N_2)\| + \|(M_1 - M_2)  \Xi N_2\| \\
       \leq & \|M_1\Xi\|\|N_1 - N_2\| + \|M_1 - M_2\| \|\Xi N_1\| + \\
       & \hspace*{10ex} + \|M_1 - M_2\| \|\Xi (N_2 - N_1)\| \\
       \leq & \|M_1\Xi\|\|N_1 - N_2\| + \|M_1 - M_2\| \|\Xi N_1\| + \\
       & \hspace*{10ex} + \|\Xi \| \|M_1 - M_2\| \|N_1 - N_2\|
    \end{align*}
\end{proof}
\end{lemma}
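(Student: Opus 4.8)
The plan is to bound the difference of the triple product by the standard add-and-subtract (telescoping) technique, relying solely on the two postulated norm properties, namely sub-additivity $\|X+Y\|\le\|X\|+\|Y\|$ and sub-multiplicativity $\|XY\|\le\|X\|\,\|Y\|$, together with the absolute homogeneity that every norm enjoys (so that $\|N_2-N_1\|=\|N_1-N_2\|$). No structure of $\Xi$ or of the individual matrices is needed.

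First I would insert the intermediate term $M_1\Xi N_2$ and write
\[
M_1\Xi N_1 - M_2\Xi N_2 = M_1\Xi(N_1-N_2) + (M_1-M_2)\Xi N_2,
\]
then apply sub-additivity to split the left-hand norm into $\|M_1\Xi(N_1-N_2)\| + \|(M_1-M_2)\Xi N_2\|$. Applying sub-multiplicativity to the first summand, grouping $M_1\Xi$ as a single block, immediately produces the term $\|M_1\Xi\|\,\|N_1-N_2\|$ appearing on the right-hand side.

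The only point requiring care is the second summand, since the target bound is expressed through $\|\Xi N_1\|$ and $\|M_1-M_2\|$ rather than through $N_2$. Here I would perform a second add-and-subtract, replacing $N_2$ by $N_1+(N_2-N_1)$, so that
\[
(M_1-M_2)\Xi N_2 = (M_1-M_2)\Xi N_1 + (M_1-M_2)\Xi(N_2-N_1).
\]
Sub-additivity separates these two contributions; grouping $\Xi N_1$ as a block in the first gives $\|M_1-M_2\|\,\|\Xi N_1\|$, while the second is bounded by full factorization as $\|\Xi\|\,\|M_1-M_2\|\,\|N_1-N_2\|$, where homogeneity is used to flip the sign of $N_2-N_1$. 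Collecting the three contributions reproduces the claimed inequality exactly.

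Since every step is merely an instance of the triangle inequality or of sub-multiplicativity, there is no genuine obstacle. The only subtlety is bookkeeping: at each split one must choose which of $M_1\Xi$ versus $M_2\Xi$, and $\Xi N_1$ versus $\Xi N_2$, to keep intact, so that the final form matches the stated right-hand side (which privileges $M_1\Xi$ and $\Xi N_1$). A different bracketing would yield an equally valid but differently shaped bound, so fixing the order of the two telescoping steps is precisely what pins down the stated constants.
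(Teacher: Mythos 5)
Your proposal is correct and follows exactly the same route as the paper's proof: the same first telescoping step through $M_1\Xi N_2$, followed by the same second split of $(M_1-M_2)\Xi N_2$ into $(M_1-M_2)\Xi N_1 + (M_1-M_2)\Xi(N_2-N_1)$, yielding the three terms in the stated order. No differences worth noting.
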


\begin{corollary}
\label{corollary:difference_cross_product_transpose}
Given matrices $M_1, M_2 \in \bR^{p\times q}$, $\Xi \in \bR^{p \times p}$, and any well-posed matrix norm $\|\cdot\|$ that possesses sub-additivity and sub-multiplicativity, we have
\begin{multline}
    \|M^\top_1\Xi M_1 - M^\top_2 \Xi M_2\| \leq \|\Xi\|\|M_1 - M_2\|^2 + \\
    + (\|M^\top_1\Xi\| + \|\Xi M_1\|)\|M_1 - M_2\|.
\end{multline}
\end{corollary}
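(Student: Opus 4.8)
The plan is to recognize this corollary as a direct specialization of Lemma~\ref{lm:difference_cross_product} rather than to redo a telescoping estimate from scratch. In that lemma the outer and inner factors are allowed to differ, so I would instantiate it with the substitution $M_1 \mapsto M_1^\top$, $M_2 \mapsto M_2^\top$, $N_1 \mapsto M_1$, $N_2 \mapsto M_2$, and the middle factor $\Xi$ left unchanged. The dimension bookkeeping is consistent: $M_1^\top, M_2^\top \in \bR^{q\times p}$, $\Xi \in \bR^{p\times p}$, and $M_1, M_2 \in \bR^{p\times q}$, so every product is well defined and $M_1^\top \Xi M_1 - M_2^\top \Xi M_2 \in \bR^{q\times q}$, matching the left-hand side of the claim.

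Carrying out this substitution, Lemma~\ref{lm:difference_cross_product} yields immediately
\begin{multline*}
    \|M_1^\top \Xi M_1 - M_2^\top \Xi M_2\| \leq \|\Xi\|\,\|M_1^\top - M_2^\top\|\,\|M_1 - M_2\| \\
    + \|M_1^\top \Xi\|\,\|M_1 - M_2\| + \|\Xi M_1\|\,\|M_1^\top - M_2^\top\|.
\end{multline*}
Comparing this with the target, the second term already matches, while the first and third terms agree with the claim precisely when $\|M_1^\top - M_2^\top\|$ can be replaced by $\|M_1 - M_2\|$. Since $M_1^\top - M_2^\top = (M_1 - M_2)^\top$, the last step I would take is to invoke the transpose-invariance $\|(M_1 - M_2)^\top\| = \|M_1 - M_2\|$; substituting this collapses the first term into $\|\Xi\|\,\|M_1 - M_2\|^2$ and turns the third into $\|\Xi M_1\|\,\|M_1 - M_2\|$, producing exactly the stated bound after grouping the two linear terms.

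The one point that requires care — and the only genuine obstacle — is that transpose-invariance is not literally among the listed hypotheses (sub-additivity and sub-multiplicativity). I would therefore note that the inequality is applied in this paper only to the Frobenius norm $\|\cdot\|_{\mathrm{F}}$ and the spectral norm $\|\cdot\|_2$, both of which satisfy $\|M^\top\| = \|M\|$, so the extra property is always available where the corollary is used. Equivalently, one can prove it directly by expanding $M_2 = M_1 - (M_1 - M_2)$ and bounding the three resulting cross terms $M_1^\top \Xi (M_1-M_2)$, $(M_1-M_2)^\top \Xi M_1$, and $(M_1-M_2)^\top \Xi (M_1-M_2)$ via sub-multiplicativity; this avoids citing the lemma but still leaves a factor $\|(M_1-M_2)^\top\|$ that must be identified with $\|M_1-M_2\|$, confirming that transpose-invariance is the essential (and only) ingredient beyond the preceding lemma.
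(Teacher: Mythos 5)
Your proof is correct and matches the paper's (implicit) argument: the corollary is stated without a proof precisely because it is the direct specialization of Lemma~\ref{lm:difference_cross_product} under the substitution $M_i \mapsto M_i^\top$, $N_i \mapsto M_i$, exactly as you carry out. Your observation that the final step needs transpose-invariance $\|M^\top\| = \|M\|$, which is not implied by sub-additivity and sub-multiplicativity alone but does hold for the Frobenius and spectral norms actually used in the paper, is a valid and worthwhile catch rather than a gap in your argument.
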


\begin{lemma}
    \label{lm:system_level_synthesis_matrix}
Given matrices $A$ and $B$ as in \eqref{eq:sys_linear_true} and their estimated counterparts $\hA$ and $\hB$, we define $\bQN \sdeq I_{N} \otimes Q$, 
\begin{equation*}
    \hspace{-0.25cm}\PhiN \sdeq 
    \begin{bmatrix}
        I \\
        A \\
        A^2 \\
        \vdots \\
        A^N
    \end{bmatrix},
    \GaN \sdeq
    \begin{bmatrix}
        0 & 0 & \cdots & 0 \\
        B & 0 & \cdots & 0 \\
        AB & B & \cdots & 0 \\
        \vdots & \vdots & & \vdots \\
        A^{N-1}B & A^{N-2}B & \cdots & B
    \end{bmatrix}\hspace{-0.15cm}\scm
\end{equation*}
\begin{equation*}
    \hspace{-0.25cm}\hPhiN \sdeq 
    \begin{bmatrix}
        I \\
        \hA \\
        \hA^2 \\
        \vdots \\
        \hA^N
    \end{bmatrix},
    \hGaN \sdeq
    \begin{bmatrix}
        0 & 0 & \cdots & 0 \\
        \hB & 0 & \cdots & 0 \\
        \hA\hB & \hB & \cdots & 0 \\
        \vdots & \vdots & & \vdots \\
        \hA^{N-1}\hB & \hA^{N-2}\hB & \cdots & \hB
    \end{bmatrix}\hspace{-0.15cm}.
\end{equation*}
\cliu{
Then, we have the following two inequalities as
\begin{subequations}
\label{eq:k_and_CapitalK}
    \begin{align}
        \label{eq:capitalK_original_results}
        & \|\hGaN^\top \overline{Q}_{N+1}\hGaN - \GaN^\top \overline{Q}_{N+1}\GaN\|_2 \leq \theta_{N,(u)}, \\
        \label{eq:k_original_results}
        & \|\hGaN^\top \overline{Q}_{N+1}\hPhiN - \GaN^\top \overline{Q}_{N+1}\PhiN\|_2 \leq \theta_{N,(x,u)}
    \end{align}
\end{subequations}
}
where $\theta_{N,(u)}$ and $\theta_{N,(x,u)}$ are given by
\begin{subequations}
\label{eq:function_theta}
    \begin{align}
        & \hspace*{-1ex} \theta_{N,(u)} = \overline{\sigma}_Q\big(2\|\hGaN\|_2\bar{g}_{(u)} + \bar{g}^2_{(u)}\big), \\
        & \hspace*{-1ex} \theta_{N,(x,u)} = \overline{\sigma}_Q\big(\|\hGaN\|_2\bar{g}_{(x)} + \|\hPhiN\|_2\bar{g}_{(u)} + \bar{g}_{(x)}\bar{g}_{(u)}\big),
    \end{align}
\end{subequations}
in which $\bar{g}_{(x)}$ and $\bar{g}_{(u)}$ are, respectively, defined as $\bar{g}_{(x)} := \sum^N_{i=1}g^{(1)}_{i,(x)}$ and $\bar{g}_{(u)} := \sum^{N}_{i=1}\sum^{i-1}_{j=0}g^{(1)}_{j,(u)}$ with $g^{(1)}_{i,(x)}$ and $g^{(1)}_{i,(u)}$ given as in \eqref{eq:function_g}.
\end{lemma}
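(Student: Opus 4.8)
The plan is to reduce both inequalities to the two elementary spectral-norm distances $\|\hPhiN - \PhiN\|_2$ and $\|\hGaN - \GaN\|_2$, and then to invoke the general perturbation bounds already established, namely Corollary~\ref{corollary:difference_cross_product_transpose} for \eqref{eq:capitalK_original_results} and Lemma~\ref{lm:difference_cross_product} for \eqref{eq:k_original_results}. Throughout I would use that $\overline{Q}_{N+1} = I_{N+1}\otimes Q$ is symmetric positive definite with $\|\overline{Q}_{N+1}\|_2 = \overline{\sigma}_Q$, which follows since the Kronecker product inherits the eigenvalues of $Q$.

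The first task is to prove the two auxiliary block-level bounds
\begin{equation*}
\|\hPhiN - \PhiN\|_2 \leq \bar{g}_{(x)}, \qquad \|\hGaN - \GaN\|_2 \leq \bar{g}_{(u)}.
\end{equation*}
For $\PhiN - \hPhiN$, the $k$-th block row is $A^k - \hA^k$ while the top block row vanishes; writing this block column as a sum of matrices each carrying a single nonzero block and applying the triangle inequality gives $\|\PhiN - \hPhiN\|_2 \leq \sum_{k=1}^N \|A^k - \hA^k\|_2$. Bounding each summand with \eqref{eq:difference_power_A} and identifying $g^{(1)}_{k,(x)}$ from \eqref{eq:function_g} yields $\bar{g}_{(x)}$. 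For $\GaN - \hGaN$, whose $(k,i)$ block equals $A^{k-1-i}B - \hA^{k-1-i}\hB$ for $i \le k-1$, the same single-block triangle-inequality argument gives $\|\GaN - \hGaN\|_2 \leq \sum_{k=1}^N\sum_{i=0}^{k-1}\|A^{k-1-i}B - \hA^{k-1-i}\hB\|_2$; after the reindexing $j = k-1-i$, together with \eqref{eq:difference_power_AB} and the definition of $g^{(1)}_{j,(u)}$, this collapses exactly to the double sum $\bar{g}_{(u)} = \sum_{i=1}^N\sum_{j=0}^{i-1}g^{(1)}_{j,(u)}$.

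With these in hand, for \eqref{eq:capitalK_original_results} I would apply Corollary~\ref{corollary:difference_cross_product_transpose} with $M_1 = \hGaN$, $M_2 = \GaN$ and $\Xi = \overline{Q}_{N+1}$. Using $\|\hGaN^\top\overline{Q}_{N+1}\|_2,\, \|\overline{Q}_{N+1}\hGaN\|_2 \le \overline{\sigma}_Q\|\hGaN\|_2$, the corollary produces $\overline{\sigma}_Q\|\hGaN - \GaN\|_2^2 + 2\overline{\sigma}_Q\|\hGaN\|_2\|\hGaN - \GaN\|_2$, and substituting $\|\hGaN - \GaN\|_2 \le \bar{g}_{(u)}$ recovers $\theta_{N,(u)}$. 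Analogously, for \eqref{eq:k_original_results} I would apply Lemma~\ref{lm:difference_cross_product} with $M_1 = \hGaN^\top$, $M_2 = \GaN^\top$, $\Xi = \overline{Q}_{N+1}$, $N_1 = \hPhiN$, $N_2 = \PhiN$; inserting $\|\overline{Q}_{N+1}\|_2 = \overline{\sigma}_Q$, the bounds on $\|\hGaN^\top\overline{Q}_{N+1}\|_2$ and $\|\overline{Q}_{N+1}\hPhiN\|_2$, and the two block-level estimates gives precisely the three terms of $\theta_{N,(x,u)}$.

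The genuinely new content, and the only step requiring care, is the block-wise spectral bound, in particular matching $\|\hGaN - \GaN\|_2$ to the double sum $\bar{g}_{(u)}$. Here one must justify bounding the spectral norm of the block-Toeplitz lower-triangular difference by the sum of the norms of all its individual blocks --- a loose but valid triangle-inequality estimate, since a matrix with a single nonzero block has spectral norm equal to that block's norm --- and then carry out the reindexing that turns the row-and-column double sum over blocks into the stated $\sum_{i=1}^N\sum_{j=0}^{i-1} g^{(1)}_{j,(u)}$. Everything else is a direct substitution into the already-established perturbation inequalities.
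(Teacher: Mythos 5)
Your proposal is correct and follows essentially the same route as the paper's proof: applying Lemma \ref{lm:difference_cross_product} (resp.\ Corollary \ref{corollary:difference_cross_product_transpose}) with $\Xi = \overline{Q}_{N+1}$ and then bounding $\|\hPhiN - \PhiN\|_2$ and $\|\hGaN - \GaN\|_2$ by the block-wise sums $\bar{g}_{(x)}$ and $\bar{g}_{(u)}$ via Lemma \ref{lm:power_difference}. The only difference is that you spell out the single-nonzero-block triangle-inequality argument that the paper dispatches by citation, which is a harmless (and correct) elaboration.
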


\begin{proof}
    We first provide the proof of \eqref{eq:k_original_results}. Due to Lemma \ref{lm:difference_cross_product}, we have
    \begin{multline*}
        \|\hGaN^\top \overline{Q}_{N+1}\hPhiN - \GaN^\top \overline{Q}_{N+1}\PhiN\|_2 \leq \\
        \|\overline{Q}_{N+1}\|_2\bigg(\|\hGaN\|_2\|\hPhiN - \PhiN\|_2  \\ +\|\hPhiN\|_2\|\hGaN - \GaN\|_2 + \|\hPhiN - \PhiN\|_2\|\hGaN - \GaN\|_2\bigg).
    \end{multline*}
In addition, due to lemma \ref{lm:power_difference} and the property of the spectral norm of block matrix \cite{horn2012matrix}, we have $\|\hPhiN - \PhiN\|_2 \leq \sum^N_{i=1}\|A^i - \hA^i\|_2 \leq \sum^N_{i=1}g^{(1)}_{i,(x)}$ and $\|\hGaN - \GaN\|_2 \leq \sum^N_{i=1}\sum^{i-1}_{j=0}\|A^jB - \hA^j\hB\|_2 \leq \sum^N_{i=1}\sum^{i-1}_{j=0}g^{(1)}_{j,(u)}$. The final inequality as in \eqref{eq:k_original_results} is thus established by substituting the above results.

The proof of \eqref{eq:capitalK_original_results} follows a similar procedure using Corollary \ref{corollary:difference_cross_product_transpose}.
\end{proof}

\begin{lemma}
    \label{lm:quadratic_norm}
    Given two sequences of vectors $\{a_i\}^{N}_{i=1}$ and $\{b_i\}^{N}_{i=1}$ where $a_i, b_i \in \bR^n, \forall i \in \bZ^+_N$ and a symmetric positive definite matrix $Q \in \bR^{n\times n}$, we have
    \vspace{-0.2cm}
    \begin{multline}
    \label{eq:quadratic_norm}
        \hspace{-0.35cm} \sum^N_{i=1} \nQ{a_i \pm b_i} \leq \sum^N_{i=1}\left[\nQ{a_i} \splus \nQ{b_i} \splus 2(\nQ{a_i}\nQ{b_i})^\frac{1}{2}\right] \\
        \hspace{-0.1cm} \leq \sum^N_{i=1}(\nQ{a_i} \splus \nQ{b_i}) \splus 2\left[(\sum^N_{i=1} \nQ{a_i})(\sum^N_{i=1} \nQ{b_i})\right]^\frac{1}{2} \hspace{-0.1cm}
    \end{multline}
\end{lemma}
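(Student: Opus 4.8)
The plan is to establish the two inequalities in the chain separately, each by a single invocation of the Cauchy--Schwarz inequality: the first in the $Q$-weighted inner product and the second in $\bR^N$.

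First I would prove the inner (per-index) bound. For each fixed $i$, expand $\nQ{a_i \pm b_i} = (a_i \pm b_i)^\top Q (a_i \pm b_i) = \nQ{a_i} + \nQ{b_i} \pm 2\, a_i^\top Q b_i$. Because $Q \succ 0$, the bilinear form $\langle x, y\rangle_Q := x^\top Q y$ is a genuine inner product, so Cauchy--Schwarz in this geometry gives $|a_i^\top Q b_i| \le \|a_i\|_Q \|b_i\|_Q = (\nQ{a_i}\,\nQ{b_i})^{1/2}$. Hence, for \emph{both} choices of sign, $\pm 2\, a_i^\top Q b_i \le 2(\nQ{a_i}\,\nQ{b_i})^{1/2}$, which yields the bracketed term-wise bound. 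Summing over $i$ from $1$ to $N$ then establishes the first inequality.

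For the outer inequality it suffices to bound the cross-sum $\sum_{i=1}^N (\nQ{a_i}\,\nQ{b_i})^{1/2}$, since the remaining term $\sum_i(\nQ{a_i} + \nQ{b_i})$ already appears on both sides. Setting $p_i := \|a_i\|_Q$ and $q_i := \|b_i\|_Q$, these are nonnegative reals and $\sum_i (\nQ{a_i}\,\nQ{b_i})^{1/2} = \sum_i p_i q_i$. Applying the standard Cauchy--Schwarz inequality to the vectors $(p_i)_{i=1}^N$ and $(q_i)_{i=1}^N$ in $\bR^N$ gives $\sum_i p_i q_i \le \big(\sum_i p_i^2\big)^{1/2}\big(\sum_i q_i^2\big)^{1/2} = \big[\big(\sum_i \nQ{a_i}\big)\big(\sum_i \nQ{b_i}\big)\big]^{1/2}$. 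Multiplying through by $2$ and reinstating the common term completes the chain.

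There is no genuine obstacle here; the lemma is purely a double application of Cauchy--Schwarz, once in the $Q$-weighted geometry and once in the Euclidean geometry of $\bR^N$. The only point deserving explicit mention is that the middle expression must dominate $\nQ{a_i \pm b_i}$ for both signs at once, which is guaranteed because the sign-dependent cross term is controlled by its absolute value; everything else is elementary expansion and summation.
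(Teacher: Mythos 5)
Your proof is correct and follows essentially the same route as the paper: both establish the termwise bound via the Cauchy--Schwarz inequality for the $Q$-weighted inner product (the paper merely makes this explicit through a Cholesky factorization $Q=\Gamma_Q^\top\Gamma_Q$), and both obtain the second inequality by applying the standard Cauchy--Schwarz inequality in $\bR^N$ to the sequences $\|a_i\|_Q$ and $\|b_i\|_Q$.
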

\vspace{0.1cm}
\begin{proof}
\cliu{The proof trivially applies the Cauchy–Schwarz inequality, and we provide it here for completeness.} We denote the Cholesky decomposition of the matrix $Q$ by $Q = \Gamma^\top_Q \Gamma_Q$, then we can proceed as follows:
\begin{align*}
    & \sum^N_{i=1}\nQ{a_i \pm b_i} \\ 
    \leq & \sum^N_{i=1}\left[\nQ{a_i} \splus \nQ{b_i} \splus 2|a_i^\top Qb_i|\right] \\
    \leq & \sum^N_{i=1}\left[\nQ{a_i} \splus \nQ{b_i} \splus 2(\|\Gamma_Q a_i\|^2_2\|\Gamma_Q b_i\|^2_2)^{\frac{1}{2}}\right] \\
    \leq & \sum^N_{i=1}\left[\nQ{a_i} \splus \nQ{b_i} \splus 2(\nQ{a_i}\nQ{b_i})^{\frac{1}{2}}\right],
\end{align*}
where the second inequality holds due to the Cauchy–Schwarz inequality. We thus proved the first inequality in \eqref{eq:quadratic_norm}, and further applying Cauchy–Schwarz inequality to the product $(\nQ{a_i}\nQ{b_i})^{\frac{1}{2}}$ in the above result leads to
\begin{align*}
    &\sum^N_{i=1}\left[\nQ{a_i} \splus \nQ{b_i} \splus 2(\nQ{a_i}\nQ{b_i})^{\frac{1}{2}}\right] \\
    \leq & \sum^N_{i=1}(\nQ{a_i} \splus \nQ{b_i}) \splus 2\left[(\sum^N_{i=1} \nQ{a_i})(\sum^N_{i=1} \nQ{b_i})\right]^\frac{1}{2},
\end{align*}
which is the second inequality in \eqref{eq:quadratic_norm}.
\end{proof}

\subsection{Technical Lemmas}
\label{appendix:B--Lemma_exponential decay}
\subsubsection{Proof of Lemma \ref{lm:local_stabilization_property}}
Given a stabilizable matrix pair $(A, B)$, by definition there exists a matrix $K$ such that $\rho(\Acl) := \rho(A+BK) < 1$. Due to Gelfand's formula \cite[Lemma IX.1.8]{dunford1988linear}, we know that there exist scalars $\lambda_K \geq 1$ and $\rho_K \in (0,1)$ s.t. $\|(\Acl)^k\|_2 \leq \lambda_K (\sqrt{\rho_K})^k$, $\forall k \in \bN$. On the other hand, for any vector $x \in \bR^n$, we have
\begin{align*}
    \|x\|^2_{Q+K^\top RK} &= \|x\|^2_{Q} + \|Kx\|^2_{R} \leq \|x\|^2_{Q} + \overline{\sigma}_R \|Kx\|^2_2 \\
    &\leq \|x\|^2_{Q} + \overline{\sigma}_R\|K\|^2_2 \|x\|^2_2 \\
    &\leq (1 + \underline{\sigma}^{-1}_Q\overline{\sigma}_R\|K\|^2_2)\|x\|^2_Q.
\end{align*}
Thus, $Q + K^\top RK \preceq (1 + \underline{\sigma}^{-1}_Q\overline{\sigma}_R\|K\|^2_2)Q$. As such, for $k \in \bN_+$ and for the linear control law $u = \kappa(x) = Kx$, the closed-loop stage cost can be bounded as
\begin{align*}
    &\hspace*{-5ex}l(\phi_x^{[\kappa]}(k,x), K\phi_x^{[\kappa]}(k,x)) \\
    = &x^\top \left\{[(\Acl)^k]^\top (Q + K^\top RK) [(\Acl)^k]\right\} x \\
    \leq &(1 + \underline{\sigma}^{-1}_Q\overline{\sigma}_R\|K\|^2_2)r_Q\|(\Acl)^k\|^2_2\|x\|^2_Q \\
    \leq &(1 + \underline{\sigma}^{-1}_Q\overline{\sigma}_R\|K\|^2_2)r_Q(\lambda_K)^2 (\rho_K)^k\nQ{x}.
\end{align*}
Taking $C^\ast_K = \big(1+\underline{\sigma}^{-1}_Q\overline{\sigma}_R\|K\|^2_2\big)r_Q\lambda_{K}^2$, we arrive at \eqref{eq:local_exponential} by combining the above results.

\subsubsection{Linear bound on square root function} We provide a trivial lemma about the square root function.
\begin{lemma}
\label{lm:square_root}
Given $x \in \bR_+$, we have $\sqrt{x} \leq px + q$ \cliu{for any pairs $(p, q)$ such that $4pq = 1$ and $p > 0$.}
\end{lemma}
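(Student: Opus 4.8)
The plan is to reduce the claim to a statement about a single-variable quadratic and then exploit the constraint $4pq = 1$. First I would substitute $t = \sqrt{x} \geq 0$, so that the target inequality $\sqrt{x} \leq px + q$ becomes $pt^2 - t + q \geq 0$ for all $t \geq 0$. Since $p > 0$, the left-hand side is a convex quadratic in $t$ whose discriminant is exactly $1 - 4pq$, which vanishes by hypothesis. Hence the quadratic is a perfect square and is nonnegative everywhere.

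Concretely, using $q = 1/(4p)$ I would verify the identity $pt^2 - t + q = p\left(t - \tfrac{1}{2p}\right)^2$, which is manifestly nonnegative for every real $t$, and in particular for $t = \sqrt{x}$; undoing the substitution then yields $\sqrt{x} \leq px + q$. Alternatively, and perhaps more directly, I would invoke the AM-GM inequality on the two nonnegative quantities $px$ and $q$, giving $px + q \geq 2\sqrt{pqx} = 2\sqrt{pq}\,\sqrt{x}$. The condition $4pq = 1$ means $\sqrt{pq} = 1/2$, so the right-hand side is exactly $\sqrt{x}$, which establishes the claim.

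There is no genuine obstacle here; the only piece of content is recognizing that the normalization $4pq = 1$ is precisely what makes AM-GM tight (equivalently, what makes the discriminant of $pt^2 - t + q$ vanish). The equality case occurs at $x = 1/(4p^2)$, so the bound is the sharpest linear upper bound of its form for the chosen $p$, which is presumably why this normalization is imposed when the lemma is later applied to convert square-root error terms into linear ones.
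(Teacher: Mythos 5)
Your proposal is correct, and your second argument (AM--GM applied to $px$ and $q$, using $4pq=1$ to get $2\sqrt{pqx}=\sqrt{x}$) is exactly the paper's proof; the completing-the-square variant is an equivalent reformulation of the same idea.
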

\begin{proof}
    By the simple AM-GM inequality, we know $px + q \geq 2(pqx)^{\frac{1}{2}} = \sqrt{x}$.
\end{proof}

\subsubsection{Difference between the optimal inputs} As in Section \ref{subsec:mpc_value_function}, we define the difference between the two optimal inputs as $\du(x) := \un(x) - \hun(x)$, the following lemma provides an upper bound on $\|\du(x)\|_2$. It is worth noting that the obtained upper bound as in \eqref{eq:input_upper_bound_original} is closely related to the term $\Eu$ as in \eqref{eq:Eu}. \cliu{To establish the desired bound, we first present an intermediate result on the sensitivity analysis of QPs. Specifically, we provide the following Lemma \ref{lm:sensitivity_qp_less_conservative} in which an upper bound on the distance between the optimal solutions of two parametric QPs is provided. The derived bound, as a slight modification compared to that in \cite[Theorem 2.1]{daniel1973stability}, is less conservative and more suitable to our performance analysis.
\begin{lemma}
    \label{lm:sensitivity_qp_less_conservative}
    Given symmetric positive-definite matrices $\Xi$ and $\hXi$ and a nonempty constraint set $\mathcal{C}_x = \{x \in \mathbb{R}^{n_x} \mid Gx \leq g\}$, we formulate an original QP as
    \begin{equation*}
        \mathrm{P}_{\mathrm{QP}}: \min \left\{\frac{1}{2}x^\top \Xi x + x^\top \zeta \right\} \; \text{s.t. } x \in \mathcal{C}_x,
    \end{equation*}
    and its \textit{perturbed} counterpart as
    \begin{equation*}
        \mathrm{P}_{\mathrm{QP-PT}}: \min \left\{\frac{1}{2}x^\top \hXi x + x^\top \hzeta \right\} \; \text{s.t. } x \in \mathcal{C}_x.
    \end{equation*}
    Denote the optimal solution to the problem $\mathrm{P}_{\mathrm{QP}}$ and that to the problem $\mathrm{P}_{\mathrm{QP-PT}}$, respectively, by $x^\ast$ and $\widehat{x}^\ast$, then we have
    \begin{multline}
        \|x^\ast - \widehat{x}^\ast\|_2 \leq \min\bigg\{\bar{d}_x(G,g), \\ \frac{1}{\underline{\sigma}_{\hXi}}\left(\|x^\ast\|_2\|\Xi-\hXi\|_2 + \|\zeta -\hzeta\|_2 \right)\bigg\},
    \end{multline}
    where the function $\bar{d}_x(G,g) := \max_{x_1, x_2 \in \mathcal{C}_x}\|x_1 - x_2\|_2$.    
\end{lemma}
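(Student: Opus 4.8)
The plan is to prove the two bounds inside the minimum separately and then combine them. The first is immediate: positive definiteness of $\Xi$ and $\hXi$ renders both quadratic objectives strictly convex over the nonempty polyhedron $\mathcal{C}_x$, so $x^\ast$ and $\widehat{x}^\ast$ exist, are unique, and both lie in $\mathcal{C}_x$; consequently $\|x^\ast - \widehat{x}^\ast\|_2 \leq \max_{x_1,x_2 \in \mathcal{C}_x}\|x_1 - x_2\|_2 = \bar{d}_x(G,g)$ by the definition of the diameter. All the real work is in the second bound.

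For the second bound I would characterize each optimizer by its first-order variational inequality. Since each objective is convex and $\mathcal{C}_x$ is convex, the optimality conditions read, for every $y \in \mathcal{C}_x$,
\begin{equation*}
  (\Xi x^\ast + \zeta)^\top (y - x^\ast) \geq 0, \qquad (\hXi \widehat{x}^\ast + \hzeta)^\top (y - \widehat{x}^\ast) \geq 0 .
\end{equation*}
The key step is a cross-substitution: evaluate the first inequality at $y = \widehat{x}^\ast$ and the second at $y = x^\ast$, and add them. Writing $d := \widehat{x}^\ast - x^\ast$, the two inequalities combine and everything collapses to $\big[\Xi x^\ast - \hXi\widehat{x}^\ast + \zeta - \hzeta\big]^\top d \geq 0$.

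The decisive manipulation -- and the reason the bound improves on the generic estimate of \cite{daniel1973stability} -- is to expand $\hXi\widehat{x}^\ast = \hXi x^\ast + \hXi d$ rather than expanding the $\Xi x^\ast$ term. This surfaces the quadratic form $d^\top \hXi d$ and rearranges the previous inequality into
\begin{equation*}
  d^\top \hXi d \leq \big[(\Xi - \hXi) x^\ast + (\zeta - \hzeta)\big]^\top d .
\end{equation*}
I would then bound the left-hand side below by $\underline{\sigma}_{\hXi}\|d\|_2^2$ using positive definiteness of $\hXi$, and bound the right-hand side above by Cauchy--Schwarz together with the triangle inequality and submultiplicativity of the spectral norm, obtaining $(\|x^\ast\|_2\|\Xi - \hXi\|_2 + \|\zeta - \hzeta\|_2)\|d\|_2$. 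Dividing through by $\|d\|_2$ (the case $d = 0$ being trivial) yields precisely the second entry of the minimum.

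The argument is largely routine once the variational-inequality characterization is in place; there is no deep obstacle, only a modeling choice that must be made deliberately. Expanding the perturbed term $\hXi\widehat{x}^\ast$ is what makes the \emph{perturbed} curvature $\underline{\sigma}_{\hXi}$ appear in the denominator and the \emph{nominal} solution norm $\|x^\ast\|_2$ in the numerator; the symmetric choice would instead produce $\underline{\sigma}_{\Xi}$ and $\|\widehat{x}^\ast\|_2$. I expect the only things to watch are this bookkeeping and the degenerate $d = 0$ case, both of which are easily handled.
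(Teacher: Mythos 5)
Your proof is correct and is essentially the standard perturbation argument of the reference the paper itself invokes (Daniel 1973, Theorem 2.1): the paper omits the proof entirely and defers to that result, whose derivation is exactly your cross-substituted variational inequalities, the expansion $\hXi\widehat{x}^\ast = \hXi x^\ast + \hXi(\widehat{x}^\ast - x^\ast)$, strong convexity of the perturbed objective, and Cauchy--Schwarz. The diameter bound $\bar{d}_x(G,g)$ is handled the same trivial way, so there is nothing to add.
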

\begin{proof}
    The proof follows that in \cite[Theorem 2.1]{daniel1973stability} and is omitted here.
\end{proof}
Following the above sensitivity analysis, the upper bound on $\|\du(x)\|_2$ can be established in the following lemma.}
\cliu{
\begin{lemma}
    \label{lm:input_difference_bound}
    The input difference $\du(x)$ satisfies
    \begin{equation}
    \label{eq:input_upper_bound_original}
        \|\du(x)\|_2 \leq \Delta_{N, (\du)}(x),
    \end{equation}
    where $\Delta_{N, (\du)}(x)$ equals
    \begin{equation}
    \label{eq:details_input_upper_bound_original}
        \hspace*{-0.25cm} \min\bigg\{(N\overline{d}_u)^{\frac{1}{2}}, \\ \frac{1}{\underline{\sigma}_{\hHN}}\left((N\overline{u})^{\frac{1}{2}}\theta_{N,(u)} \splus \|x\|_2\theta_{N,(x,u)} \right)\bigg\},\hspace*{-0.15cm}
    \end{equation}
    in which $\theta_{N,(x)}$ and $\theta_{N,(x,u)}$ are defined as in \eqref{eq:function_theta}, $\hHN = \bRN+\hGaN^\top\bQNp\hGaN$ with $\bRN = I_{N} \otimes R$ and $\bQNp = I_{N+1} \otimes Q$, $\overline{d}_{u} := \max_{u_1,u_2 \in \cU}\ntwo{u_1 - u_2}^2$, and $\overline{u} := \max_{u \in \cU}\ntwo{u}^2$.                        
\end{lemma}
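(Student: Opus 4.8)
The plan is to recast both $\mathrm{P}_{\mathrm{ID-MPC}}$ and $\mathrm{P}_{\mathrm{NM-MPC}}$ as parametric QPs over the stacked decision vector $\bu = [u^\top_{0|t},\dots,u^\top_{N-1|t}]^\top$ sharing an \emph{identical} constraint set, so that the sensitivity result of Lemma \ref{lm:sensitivity_qp_less_conservative} applies directly. Using the stacked state evolution $\mathbf{x} = \PhiN x + \GaN \bu$ implied by \eqref{eq:state_evolution} (and its nominal analogue $\mathbf{x} = \hPhiN x + \hGaN\bu$), the cost reduces to $J_N(x,\bu) = \bu^\top H_N \bu + 2x^\top \PhiN^\top \bQNp \GaN \bu + \text{const}$, where $H_N := \bRN + \GaN^\top \bQNp \GaN$, and the nominal cost is the same expression with $\hHN$, $\hGaN$, $\hPhiN$ replacing $H_N$, $\GaN$, $\PhiN$. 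Matching the template of Lemma \ref{lm:sensitivity_qp_less_conservative} then gives $\Xi = 2H_N$, $\hXi = 2\hHN$, $\zeta = 2\GaN^\top \bQNp \PhiN x$, and $\hzeta = 2\hGaN^\top \bQNp \hPhiN x$, with optimizers $x^\ast = \un(x)$ and $\widehat{x}^\ast = \hun(x)$. Since both problems are constrained only by the $N$ input-block conditions $u_{k|t} \in \cU$, the feasible set $\mathcal{C}_x$ is common to both, and $\hHN \succeq \bRN \succ 0$ (because $R \succ 0$) guarantees the positive definiteness required by the lemma.

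Applying Lemma \ref{lm:sensitivity_qp_less_conservative} yields
\[
\|\du(x)\|_2 \leq \min\Big\{\bar{d}_x(G,g),\ \tfrac{1}{\underline{\sigma}_{\hXi}}\big(\|\un(x)\|_2\,\|\Xi - \hXi\|_2 + \|\zeta - \hzeta\|_2\big)\Big\}.
\]
I would then bound each ingredient of the second argument. Because $\underline{\sigma}_{\hXi} = 2\underline{\sigma}_{\hHN}$, while $\|\Xi - \hXi\|_2 = 2\|\hGaN^\top \bQNp \hGaN - \GaN^\top \bQNp \GaN\|_2 \leq 2\theta_{N,(u)}$ by \eqref{eq:capitalK_original_results} and $\|\zeta - \hzeta\|_2 \leq 2\|x\|_2\,\theta_{N,(x,u)}$ by \eqref{eq:k_original_results}, the common factor $2$ cancels, leaving $\tfrac{1}{\underline{\sigma}_{\hHN}}\big(\|\un(x)\|_2\,\theta_{N,(u)} + \|x\|_2\,\theta_{N,(x,u)}\big)$. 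Note that, consistent with Proposition \ref{prop:bounding_mpc_cost}, the surviving eigenvalue is that of the \emph{estimated} Hessian $\hHN$, which is computable without knowledge of $(A,B)$.

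It then remains to control the two input-dependent quantities via the input-constraint data. Since $\un(x)$ stacks $N$ feasible inputs, $\|\un(x)\|_2^2 = \sum_{k=0}^{N-1}\|u^\ast_{k|t}\|_2^2 \leq N\overline{u}$, hence $\|\un(x)\|_2 \leq (N\overline{u})^{1/2}$, producing the second argument of the minimum. For the first argument, the feasible-set diameter decomposes componentwise, $\bar{d}_x(G,g)^2 = \max_{\bu_1,\bu_2 \in \mathcal{C}_x}\sum_{k=0}^{N-1}\|u_{1,k} - u_{2,k}\|_2^2 \leq N\overline{d}_u$, so $\bar{d}_x(G,g) \leq (N\overline{d}_u)^{1/2}$. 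Substituting both estimates into the minimum recovers exactly $\Delta_{N,(\du)}(x)$ as written in \eqref{eq:details_input_upper_bound_original}.

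The main obstacle is essentially careful bookkeeping rather than a new idea: transcribing the MPC stage costs into the standard QP form while correctly tracking the factor of $2$ in $\Xi$ and $\zeta$ (so that it cancels against the same factor appearing through $\|\Xi - \hXi\|_2$ and $\|\zeta - \hzeta\|_2$), and verifying that the two problems share the identical constraint set $\mathcal{C}_x$ so that Lemma \ref{lm:sensitivity_qp_less_conservative} is applicable. Every matrix-norm estimate is furnished by Lemma \ref{lm:system_level_synthesis_matrix}, and the two input bounds follow immediately from the definitions of $\overline{u}$ and $\overline{d}_u$, so no further technical machinery is required.
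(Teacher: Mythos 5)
Your proposal is correct and follows essentially the same route as the paper: both reformulate $\mathrm{P}_{\mathrm{ID-MPC}}$ and $\mathrm{P}_{\mathrm{NM-MPC}}$ as QPs over the stacked input with a common constraint set, invoke Lemma \ref{lm:sensitivity_qp_less_conservative} together with the bounds \eqref{eq:capitalK_original_results}--\eqref{eq:k_original_results} from Lemma \ref{lm:system_level_synthesis_matrix}, and finish with $\|\un(x)\|_2 \leq (N\overline{u})^{1/2}$ and the feasible-set diameter $(N\overline{d}_u)^{1/2}$. Your explicit tracking of the factor of $2$ in $\Xi$ and $\zeta$ (which the paper absorbs by silently halving the objective) is a harmless bookkeeping difference that yields the identical bound.
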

}
\begin{proof}
    For the problem $\mathrm{P}_{\mathrm{ID-MPC}}$, we form decision variable vectors $\mathbf{x}_{N,t} = [x_{0|t}^\top, x_{1|t}^\top, \dots, x_{N|t}^\top]^\top$ and $\mathbf{u}_{N-1,t} = [u_{0|t}^\top, u_{1|t}^\top, \dots, u_{N-1|t}^\top]^\top$. We exclude $u_{N|t}$ since its optimal value is $0$, which does not influence the following reasoning. By \eqref{eq:state_evolution}, we obtain
    \begin{equation*}
        \mathbf{x}_{N,t} = \PhiN x + \GaN \mathbf{u}_{N-1,t},
    \end{equation*}
    where $\PhiN$ and $\GaN$ are defined as in Lemma \ref{lm:system_level_synthesis_matrix} \cliu{and we assign $x = x(t)$ for notational simplicity w.l.o.g. .} By noting that the objective function of $\mathrm{P}_{\mathrm{ID-MPC}}$ can be rewritten as $\mathbf{x}^\top_{N,t}\overline{Q}_{N+1}\mathbf{x}_{N,t} + \mathbf{u}^\top_{N-1,t}\overline{R}_N\mathbf{u}_{N-1,t}$, an QP reformulation of the problem $\mathrm{P}_{\mathrm{ID-MPC}}$ can be obtained as
        \begin{align}
        \mathrm{QP}_{\mathrm{ID-MPC}}: & \min_{\mathbf{u}_{N-1,t}} \frac{1}{2}\mathbf{u}^\top_{N-1,t}H_N\mathbf{u}_{N-1,t} + \mathbf{u}^\top_{N-1,t}b_N
        \notag
        \end{align}
        \vspace{-0.35cm}
        \begin{align}
        \text{s.t.} \; &\; (I_N\otimes F_u)\mathbf{u}_{N-1,t} \leq \mathds{1}_{Nc_u}, \notag
        \end{align}
    where $H_N = \bRN+\GaN^\top\bQNp\GaN$ and $b_N = \GaN^\top\overline{Q}_{N+1}\PhiN x$. Similarly, we reformulate the problem $\mathrm{P}_{\mathrm{NM-MPC}}$ as
        \begin{align}
        \mathrm{QP}_{\mathrm{NM-MPC}}: & \min_{\mathbf{u}_{N-1,t}} \frac{1}{2}\mathbf{u}^\top_{N-1,t}\hHN\mathbf{u}_{N-1,t} + \mathbf{u}^\top_{N-1,t}\hat{b}_N
        \notag
        \end{align}
        \vspace{-0.35cm}
        \begin{align}
        \text{s.t.} \; &\; (I_N\otimes F_u)\mathbf{u}_{N-1,t} \leq \mathds{1}_{Nc_u}, \notag
        \end{align}
    where $\hat{b}_N = \hGaN^\top\overline{Q}_{N+1}\hPhiN x$. By treating $\mathrm{QP}_{\mathrm{ID-MPC}}$ as the original optimization problem, $\mathrm{QP}_{\mathrm{NM-MPC}}$ can be viewed as its perturbed counterpart. \cliu{Therefore, leveraging the bound given in Lemma \ref{lm:sensitivity_qp_less_conservative}, we have
    \begin{multline}
    \label{eq:input_bound_initial}
        \|\du(x)\|_2 \leq \min\bigg\{\overline{d}_{\mathbf{u}_{N-1,t}}(I_N\otimes F_u, \mathds{1}_{Nc_u}), \\ \frac{1}{\underline{\sigma}_{\hHN}}\left(\|\un(x)\|_2\|H_N - \hHN\|_2 + \|b_N - \hat{b}_N\|_2 \right)\bigg\}.
    \end{multline}}
    Due to \eqref{eq:capitalK_original_results} and \eqref{eq:k_original_results}, we know that
    \begin{subequations}
    \label{eq:transition_matrix_difference_bound}
        \begin{align}
            & \|H_N - \hHN\|_2 = \|\GaN^\top\bQNp\GaN - \hGaN^\top\bQNp\hGaN\|_2 \notag \\
            & \hspace*{30ex}  \leq \theta_{N,(u)}, \\
            & \|b_N - \hat{b}_N\|_2 = \|\GaN^\top\overline{Q}_{N+1}\PhiN x - \hGaN^\top\overline{Q}_{N+1}\hPhiN x\|_2 \notag \\
            & \hspace*{30ex} \leq \|x\|_2 \theta_{N,(x,u)}.
        \end{align}
    \end{subequations}
    By substituting \eqref{eq:transition_matrix_difference_bound} into \eqref{eq:input_bound_initial} and by noting the facts that $\|\un(x)\|_2 \leq (N\overline{u})^{\frac{1}{2}}$ and $\overline{d}_{\mathbf{u}_{N-1,t}}(I_N\otimes F_u, \mathds{1}_{Nc_u}) = (N\overline{d}_u)^{\frac{1}{2}}$, we can finally obtain the bound given as in \eqref{eq:input_upper_bound_original} and \eqref{eq:details_input_upper_bound_original}.
\end{proof}

\subsubsection{State evolution with zero initial state} The following lemma provides a bound about the open-loop state trajectory starting from $x = 0$ \cliu{in terms of a bound on the corresponding input sequence, and it is useful in establishing the term $\Epsiu$ as in \eqref{eq:Epsiu}}.
\begin{lemma}
\label{lm:zero_initial_propagation}
    Given an admissible input sequence $\mathbf{u}_{N-1}$ such that $\mathbf{u}_{N-1}[i] \in \cU, i = 0,1,\dots,N-1$, the open-loop state that starts from $x = 0$ is given by $\otx(k,0,\mathbf{u}_{N-1}) = \sum^{k-1}_{i=0}A^{k-1-i}B^i\mathbf{u}_{N-1}[i]$. Denote $\bm{\psi}_{N, x}(0,\mathbf{u}_{N-1}) = [\otx^\top(0,0,\mathbf{u}_{N-1}), \otx^\top(1,0,\mathbf{u}_{N-1}), \dots, \otx^\top(N,0,\mathbf{u}_{N-1})]^\top$, we have
    \begin{equation}
        \|\bm{\psi}_{N, x}(0,\mathbf{u}_{N-1})\|_2 \leq \left(\|\hGaN\|_2 + \bar{g}_{(u)}\right)\|\mathbf{u}_{N-1}\|_2,
    \end{equation}
    where $\hGaN$ and $\bar{g}_{(u)}$ are defined as in Lemma \ref{lm:system_level_synthesis_matrix}.
\end{lemma}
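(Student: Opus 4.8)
The plan is to observe that the stacked open-loop trajectory with zero initial state is precisely $\GaN$ acting on the input vector, so that the whole estimate reduces to a submultiplicative bound on $\GaN$ combined with the perturbation estimate already proved in Lemma~\ref{lm:system_level_synthesis_matrix}. Concretely, I would first invoke the state-evolution identity \eqref{eq:state_evolution} at $x=0$, which gives $\otx(k,0,\mathbf{u}_{N-1}) = \sum^{k-1}_{i=0}A^{k-1-i}B\,\mathbf{u}_{N-1}[i]$. Reading off the block structure of $\GaN$ from Lemma~\ref{lm:system_level_synthesis_matrix}, the concatenation of these states for $k=0,\dots,N$ is exactly $\bm{\psi}_{N,x}(0,\mathbf{u}_{N-1}) = \GaN\,\mathbf{u}_{N-1}$ (equivalently, the $x=0$ specialization of the relation $\mathbf{x}_{N,t} = \PhiN x + \GaN \mathbf{u}_{N-1,t}$ used in the proof of Lemma~\ref{lm:input_difference_bound}).

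Having identified this linear map, I would apply submultiplicativity of the spectral norm to obtain $\ntwo{\bm{\psi}_{N,x}(0,\mathbf{u}_{N-1})} = \ntwo{\GaN\,\mathbf{u}_{N-1}} \leq \ntwo{\GaN}\,\ntwo{\mathbf{u}_{N-1}}$. The remaining task is to pass from the unknown true matrix $\GaN$ to the computable $\hGaN$. For this I would use the triangle inequality,
\begin{equation*}
    \ntwo{\GaN} \leq \ntwo{\hGaN} + \ntwo{\GaN - \hGaN},
\end{equation*}
and then insert the bound $\ntwo{\GaN - \hGaN} \leq \sum^N_{i=1}\sum^{i-1}_{j=0} g^{(1)}_{j,(u)} = \bar{g}_{(u)}$, which was established inside the proof of Lemma~\ref{lm:system_level_synthesis_matrix} via Lemma~\ref{lm:power_difference} and the block-matrix spectral-norm property. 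Chaining these inequalities yields $\ntwo{\bm{\psi}_{N,x}(0,\mathbf{u}_{N-1})} \leq (\ntwo{\hGaN} + \bar{g}_{(u)})\,\ntwo{\mathbf{u}_{N-1}}$, which is the claim.

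I do not expect any genuine obstacle here, since every ingredient is already available: the proof is essentially bookkeeping. The only point requiring a little care is the correct identification of the block form $\bm{\psi}_{N,x}(0,\mathbf{u}_{N-1}) = \GaN\,\mathbf{u}_{N-1}$, i.e. verifying that dropping the $\PhiN x$ term (which vanishes at $x=0$) leaves exactly the Toeplitz-type convolution matrix $\GaN$; one should also note the harmless typographical discrepancy in the stated trajectory formula (the factor should read $B$, not $B^i$). Once this identification is in place, the spectral-norm estimate is immediate and the perturbation bound is quoted verbatim from Lemma~\ref{lm:system_level_synthesis_matrix}, so no new computation is needed.
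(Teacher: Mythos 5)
Your proposal is correct and follows essentially the same route as the paper: identify $\bm{\psi}_{N,x}(0,\mathbf{u}_{N-1}) = \GaN\,\mathbf{u}_{N-1}$, apply submultiplicativity of the spectral norm, and bound $\ntwo{\GaN} \leq \ntwo{\hGaN} + \ntwo{\GaN-\hGaN} \leq \ntwo{\hGaN} + \bar{g}_{(u)}$ using the estimate from Lemma~\ref{lm:system_level_synthesis_matrix}. Your observation that $B^i$ in the stated trajectory formula should read $B$ is also correct.
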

\begin{proof}
    Similar to the proof of Lemma \ref{lm:input_difference_bound}, we have $\bm{\psi}_x(0,\mathbf{u}_{N-1}) = \GaN \mathbf{u}_{N-1}$. Then, we can proceed as
    \begin{align*}
        \|\bm{\psi}_{N, x}(0,\mathbf{u}_{N-1})\|_2 & = \|\GaN \mathbf{u}_{N-1}\|_2 \\
        & \leq \|\GaN\|_2 \|\mathbf{u}_{N-1}\|_2 \\
        & \leq \left(\|\hGaN\|_2 + \|\GaN - \hGaN\|_2\right) \|\mathbf{u}_{N-1}\|_2 \\
        & \leq \left(\|\hGaN\|_2 + \bar{g}_{(u)}\right)\|\mathbf{u}_{N-1}\|_2,
    \end{align*}
which completes the proof.
\end{proof}

\subsubsection{One-step prediction error} 
\label{subapp:one_step_error}
As in the proof sketch in Section \ref{subsec:stability_analysis}, we define the one-step prediction error as $\Delta x := (A - \hA)x + (B - \hB)\hmuN(x)$. The following lemma provides a bound on $\Delta x$ in terms of $l(x, \hmuN(x))$.
\begin{lemma}
    \label{lm:prediction_error_bound}
    The one-step prediction error $\Delta x$ satisfies
    \begin{equation}
    \label{eq:prediction_error_bound_h_function}
        \|\Delta x\|^2_2 \leq h(\eA, \eB)l(x, \hmuN(x)),
    \end{equation}
    where $h(\eA, \eB) = \underline{\sigma}^{-1}_Q\eA^2 + \underline{\sigma}^{-1}_R\eB^2$ is error-consistent.
\end{lemma}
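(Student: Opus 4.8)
The plan is to bound $\ntwo{\Delta x}$ directly by splitting it into its two contributions and then recombining the resulting terms through a single application of the Cauchy--Schwarz inequality in $\bR^2$, which produces exactly the separable coefficient $h(\eA,\eB)$. This mirrors the structure of the target bound, whose two summands $\downQ^{-1}\eA^2$ and $\downR^{-1}\eB^2$ clearly correspond to the state- and input-induced error, respectively.

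First I would apply the triangle inequality to the definition $\Delta x = (A - \hA)x + (B - \hB)\hmuN(x)$ to obtain $\ntwo{\Delta x} \leq \ntwo{(A-\hA)x} + \ntwo{(B-\hB)\hmuN(x)}$. Using submultiplicativity of the spectral norm together with the elementary inequality $\ntwo{M} \leq \nF{M}$ and the uncertainty bounds $\nF{A-\hA}\leq\eA$ and $\nF{B-\hB}\leq\eB$ from \eqref{eq:uncertainty_set}, this gives
\begin{equation*}
\ntwo{\Delta x} \leq \eA \ntwo{x} + \eB \ntwo{\hmuN(x)}.
\end{equation*}

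Next I would convert the Euclidean norms into the weighted norms appearing in the stage cost. Since $Q,R\succ0$, the Rayleigh-quotient bounds $\nQ{x} \geq \downQ\ntwo{x}^2$ and $\nR{\hmuN(x)} \geq \downR\ntwo{\hmuN(x)}^2$ yield $\ntwo{x} \leq \downQ^{-1/2}\|x\|_Q$ and $\ntwo{\hmuN(x)} \leq \downR^{-1/2}\|\hmuN(x)\|_R$. Substituting, squaring, and viewing the right-hand side as the inner product of the vectors $(\eA\downQ^{-1/2},\, \eB\downR^{-1/2})$ and $(\|x\|_Q,\, \|\hmuN(x)\|_R)$, a single application of Cauchy--Schwarz separates the two factors and delivers
\begin{equation*}
\ntwo{\Delta x}^2 \leq \big(\downQ^{-1}\eA^2 + \downR^{-1}\eB^2\big)\big(\nQ{x} + \nR{\hmuN(x)}\big),
\end{equation*}
where the second factor is precisely $l(x,\hmuN(x))$, which establishes \eqref{eq:prediction_error_bound_h_function}.

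Error-consistency of $h$ then follows directly by inspection against Definition \ref{def:error_consistent}: $h$ is non-decreasing in each of $\eA$ and $\eB$ because the coefficients $\downQ^{-1}$ and $\downR^{-1}$ are strictly positive, and $h(\eA,\eB)=0$ holds if and only if $\eA=\eB=0$. The only mildly delicate point is the grouping inside Cauchy--Schwarz: attaching the $\downQ^{-1/2}$ and $\downR^{-1/2}$ factors to the coefficient vector rather than to the cost vector is what makes the bound collapse to the clean separable form $\downQ^{-1}\eA^2+\downR^{-1}\eB^2$ instead of one carrying cross terms between $\eA$ and $\eB$. Everything else is routine.
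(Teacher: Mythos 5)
Your proof is correct and arrives at exactly the bound in \eqref{eq:prediction_error_bound_h_function}; the decomposition into the $(A-\hA)x$ and $(B-\hB)\hmuN(x)$ contributions, the use of $\ntwo{M}\leq\nF{M}$ with the uncertainty sets \eqref{eq:uncertainty_set}, and the conversion to weighted norms via $\downQ$ and $\downR$ all match the paper. The only difference is the recombination step: the paper applies Young's inequality $\|a+b\|_2^2\leq(1+\epsilon)\|a\|_2^2+(1+\epsilon^{-1})\|b\|_2^2$ in $\bR^n$ with the specific choice $\epsilon=\myratio$, which is precisely the optimizer that makes Young collapse to your Cauchy--Schwarz bound, so the two arguments are algebraically equivalent. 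What your packaging buys is uniformity: because the paper's $\epsilon$ involves a ratio of $\eA^2$ and $\eB^2$, its proof must treat the cases $\eA=0$, $\eB=0$, and $\eA=\eB=0$ separately in three additional displays, whereas your triangle-inequality-plus-Cauchy--Schwarz route in $\bR^2$ is valid for all $\eA,\eB\geq 0$ in a single pass. Your closing remark about attaching $\downQ^{-1/2}$ and $\downR^{-1/2}$ to the coefficient vector is exactly the right observation --- it is what makes the second Cauchy--Schwarz factor equal $l(x,\hmuN(x))$ with no cross terms.
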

\vspace{0.1cm}
\begin{proof}
By the Cauchy-Schwarz and Young's inequalities, when $\eA,\eB > 0$, we can proceed as
    \begin{align*}
        \|\Delta x\|^2_2 &= \ntwo{(A - \hA)x + (B - \hB)\hmuN(x)}^2 \\
        &\leq \left(1 + \myratio \right)\|(A - \hA)x\|^2_2  \\ & \hspace*{5ex} + \left(1 + \invmyratio \right)\|(B - \hB)\hmuN(x)\|^2_2 \\
        &\leq \left(\eA^2 + \frac{ \eB^2\underline{\sigma}_Q }{ \underline{\sigma}_R }\right)\|x\|^2_2  \\
        & \hspace*{5ex} + \left(\eB^2 + \frac{ \eA^2\underline{\sigma}_R }{ \underline{\sigma}_Q }\right)\|\hmuN(x)\|^2_2 \\
        &\leq \left(\frac{\eA^2}{\underline{\sigma}_Q  }+ \frac{\eB^2}{\underline{\sigma}_R } \right)\left( \|x\|^2_Q +  \|\hmuN(x)\|^2_R\right) \\
        &\leq \left(\frac{\eA^2}{\underline{\sigma}_Q  }+ \frac{\eB^2}{\underline{\sigma}_R } \right) l(x, \hmuN(x)).
    \end{align*}
When $\eA = 0$ and $\eB > 0$, we have
\begin{multline*}
    \|\Delta x\|^2_2 = \|(B - \hB)\hmuN(x)\|^2_2
    \\ \leq \frac{\eB^2}{\underline{\sigma}_R}\|\hmuN(x)\|^2_R
    \leq \frac{\eB^2}{\underline{\sigma}_R}l(x, \hmuN(x)).
\end{multline*}
Likewise, when $\eB = 0$ and $\eA > 0$, we have
\begin{multline*}
    \|\Delta x\|^2_2 = \|(A - \hA)x\|^2_2 \\
    \leq \frac{\eA^2}{\underline{\sigma}_Q}\|x\|^2_Q \leq \frac{\eA^2}{\underline{\sigma}_Q}l(x, \hmuN(x)).
\end{multline*}
Finally, when $\eA = \eB = 0$, we have $\|\Delta x\|^2_2 = 0$ and $h(\eA, \eB) = 0$, meaning \eqref{eq:prediction_error_bound_h_function} is also valid. The proof is completed.
\end{proof}

\subsubsection{Multi-step prediction error} 
\label{subapp:part_multistep}
As in Step 3 of the proof sketch in Section \ref{subsec:mpc_value_function}, the $k$-step open-loop prediction error under control input $\hun(x)$) is defined as $e_{\psi}(k) := \otx(k,x,\hun(x)) - \oex(k,x,\hun(x))$, the following lemma, provides an upper bound for $e_{\psi}(k)$.
\cliu{
\begin{lemma}
    \label{lm:multi_step_prediction_error_bound}
    The concatenated prediction error $\mathbf{e}_{\psi,N} = [e^\top_{\psi}(0), e^\top_{\psi}(1), \dots, e^\top_{\psi}(N)]^\top$ satisfies
    \begin{equation}
        \label{eq:multi_step_prediction_error_bound}
        \|\mathbf{e}_{\psi,N}\|^2_2 \leq \Delta_{N, (\psi)}(x),
    \end{equation}
    where $\Delta_{N, (\psi)}(x)$ equals
    \begin{equation}
    \label{eq:detail_multi_step_prediction_error_bound}
         \sum^{N}_{k = 0}\left[\left(g^{(2)}_{k,(x)}\splus\sum^{k-1}_{i=0}g^{(2)}_{k-i-1,(u)}\right)\hspace*{-1ex}\left(\|x\|^2_2 \splus k\overline{u}\right)\right],
    \end{equation}
    in which $g^{(2)}_{k,(x)}$ and $g^{(2)}_{k-i-1,(u)}$ are defined as in \eqref{eq:function_g}.
\end{lemma}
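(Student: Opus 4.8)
The plan is to start from the closed-form state evolution in \eqref{eq:state_evolution}, applied to both the true and the estimated dynamics under the \emph{common} input $\hun(x)$. Writing $\bu = \hun(x)$, the $k$-step prediction error takes the form
\[
e_{\psi}(k) = (A^k - \hA^k)x + \sum^{k-1}_{i=0}\big(A^{k-1-i}B - \hA^{k-1-i}\hB\big)\,\bu[i],
\]
so the entire argument reduces to controlling the two matrix differences $A^k - \hA^k$ and $A^{k-1-i}B - \hA^{k-1-i}\hB$ that appear as coefficients.

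Next I would apply the triangle inequality together with sub-multiplicativity of the spectral norm, and then invoke Lemma~\ref{lm:power_difference}: equation \eqref{eq:difference_power_A} bounds $\ntwo{A^k - \hA^k} \leq g^{(1)}_{k,(x)}$, and \eqref{eq:difference_power_AB} bounds $\ntwo{A^{k-1-i}B - \hA^{k-1-i}\hB} \leq g^{(1)}_{k-1-i,(u)}$, with $g^{(1)}_{\cdot,(x)}$ and $g^{(1)}_{\cdot,(u)}$ as defined in \eqref{eq:function_g}. This produces the \emph{linear} bound
\[
\ntwo{e_{\psi}(k)} \leq g^{(1)}_{k,(x)}\ntwo{x} + \sum^{k-1}_{i=0}g^{(1)}_{k-1-i,(u)}\ntwo{\bu[i]}.
\]

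The key step that upgrades this to the target \emph{quadratic} expression is to read the right-hand side as a Euclidean inner product of a coefficient vector $[\,g^{(1)}_{k,(x)},\, g^{(1)}_{k-1,(u)},\, \dots,\, g^{(1)}_{0,(u)}\,]$ and a norm vector $[\,\ntwo{x},\, \ntwo{\bu[0]},\, \dots,\, \ntwo{\bu[k-1]}\,]$, and apply the Cauchy--Schwarz inequality. Since $g^{(2)}_{\cdot,(\cdot)} = \big[g^{(1)}_{\cdot,(\cdot)}\big]^2$ by \eqref{eq:function_g}, this yields $\ntwo{e_{\psi}(k)}^2 \leq \big(g^{(2)}_{k,(x)} + \sum^{k-1}_{i=0}g^{(2)}_{k-1-i,(u)}\big)\big(\ntwo{x}^2 + \sum^{k-1}_{i=0}\ntwo{\bu[i]}^2\big)$. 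Using feasibility $\bu[i] \in \cU$ and the definition $\overline{u} = \max_{u\in\cU}\ntwo{u}^2$, each input term is at most $\overline{u}$, so the second factor is bounded by $\ntwo{x}^2 + k\overline{u}$. Finally, summing the per-step bounds over $k = 0,\dots,N$ and using $\ntwo{\mathbf{e}_{\psi,N}}^2 = \sum^{N}_{k=0}\ntwo{e_{\psi}(k)}^2$ delivers exactly \eqref{eq:detail_multi_step_prediction_error_bound}.

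I expect no deep obstacle here, as the result is essentially a careful bookkeeping exercise. The only subtleties are (i) matching the exponent shift $k-1-i$ in the input-difference coefficient to the correct index of $g^{(1)}_{k-1-i,(u)}$ when applying \eqref{eq:difference_power_AB}, and (ii) recognizing the Cauchy--Schwarz step, which is precisely what converts the first-order norm bound into the second-order form required by the statement without incurring cross terms.
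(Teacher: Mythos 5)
Your proposal is correct and follows essentially the same route as the paper: the same decomposition of $e_{\psi}(k)$ into the coefficient differences $A^k-\hA^k$ and $A^{k-1-i}B-\hA^{k-1-i}\hB$, bounded via Lemma~\ref{lm:power_difference}, followed by a quadratic upgrade and the bound $\ntwo{\bu[i]}^2\leq\overline{u}$. The paper reaches the identical intermediate bound $\big(\sum_i\|D_i\|^2_2\big)\big(\|x\|^2_2+k\overline{u}\big)$ through Young's inequality with weights $\epsilon_{i,j}=\|D_j\|^2_2\|D_i\|^{-2}_2$ (plus a separate treatment of the degenerate case $\|D_i\|_2=0$), which is just the weighted form of the Cauchy--Schwarz step you invoke directly.
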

}
\begin{proof}
    For the prediction error $e_{\psi}(k)$, we have
    \begin{multline*}
        e_{\psi}(k) = \underbrace{(A^k - \hA^k)}_{:= D_0}x  \\ 
        + \sum^{k-1}_{i=0}\underbrace{(A^{k-i-1}B - \hA^{k-i-1}\hB)}_{:= D_{i+1}}(\hun(x))[i].
    \end{multline*}
    When $\|D_i\| > 0, i = 0,1,\dots,k$, we select $\epsilon_{i,j} = \|D_j\|^2_2\|D_i\|^{-2}_2$. By the Cauchy-Schwarz and Young's inequalities, we can obtain
    \begin{align*}
        \|e_{\psi}(k)\|^2_2 &= \|D_0x + \sum^{k-1}_{i=0}D_{i+1}(\hun(x))[i]\|^2_2 \\
        &\leq \left(1 + \sum^k_{j=1}\epsilon_{0,j}\right)\|D_0x\|^2_2  \\
        &\hspace*{3ex} + \sum^{k-1}_{i=0}\left(1 + \sum_{\substack{j\neq i+1 \\ 0\leq j\leq k}}\epsilon_{i+1,j}\right)\|D_{i+1}(\hun(x))[i]\|^2_2 \\
        &\leq \left(\sum^k_{i=0}\|D_i\|^2_2\right)\left(\|x\|^2_2 + k\overline{u}\right) \\
        &\leq \left(g^{(2)}_{k,(x)}+\sum^{k-1}_{i=0}g^{(2)}_{k-i-1,(u)}\right)\left(\|x\|^2_2 + k\overline{u}\right).
    \end{align*}
    In cases where $\|D_i\|_2 = 0$ for some $i$'s, we know $D_i = 0$, then proceeding the above procedure without those terms with $D_i = 0$ will reach the same upper bound. The final upper bound as in \eqref{eq:multi_step_prediction_error_bound} is established by summing up the above result over the index $k$.
\end{proof}

\setcounter{proposition}{0}
\subsection{More on Proposition \ref{prop:bounding_mpc_cost}}
\label{appendix:C--bounding_mpc}
\begin{proposition}
    There exist two error-consistent functions $\alpha_N(\eA, \eB)$ and $\beta_N(\eA, \eB)$ such that, for all $x \in \xroa$, $\hVn$ and $\Vit$ satisfy the following inequality:
\begin{equation*} 
\hVn (x) \leq \big (1 + \alpha_N(\eA, \eB)\big) \Vit(x) + \beta_N(\eA, \eB),
\end{equation*}
where functions $\alpha_N$ and $\beta_N$ relate to the eigenvalues and matrix norms of $\hA$, $\hB$, $Q$, $R$, the input constraint set $\cU$, but not to quantities derived from $A$ or $B$. The detailed expressions of $\alpha_N$ and $\beta_N$ are
\begin{subequations}
\label{eq:alpha_and_beta}
    \begin{align}
    \alpha_N &= \max\bigg\{ p_1\Epsi^{\frac{1}{2}} + p_3\Epsiu^{\frac{1}{2}}  \notag \\
    &\quad + p_1p_3(\Epsi\Epsiu)^{\frac{1}{2}}, \;p_2\Eu^{\frac{1}{2}} \bigg\} \\
    \beta_N &= (1+p_1\Epsi^{\frac{1}{2}})(q_3\Epsiu^{\frac{1}{2}} + \Epsiu)  \notag \\
    &\quad  \splus q_2\Eu^{\frac{1}{2}} \splus \Eu + q_1\Epsi^{\frac{1}{2}} \splus \Epsi^{\frac{1}{2}},
    \end{align}
\end{subequations}
where the pairs $(p_i, q_i) \in \bR^2_+$ satisfy $p_iq_i = 1$ and the details of the terms $\Epsi$, $\Eu$, and $\Epsiu$ are given, respectively, in \eqref{eq:Epsi}, \eqref{eq:Eu}, and \eqref{eq:Epsiu}.
\end{proposition}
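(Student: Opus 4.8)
The plan is to follow the four-step decomposition announced in the proof sketch: reduce $\hVn(x)$ to the nominal ideal cost $\Vn(x)$ plus error terms controlled by $\eA$ and $\eB$, and then invoke the standard inequality $\Vn(x) \leq \Vit(x)$. First I would expand $\hVn(x) = \sum_{k=0}^{N}\big(\nQ{\oex(k,x,\hun(x))} + \nR{(\hun(x))[k]}\big)$ and insert two successive decompositions. For the state part I write $\oex(k,x,\hun(x)) = \otx(k,x,\hun(x)) - e_\psi(k)$ and then, using linearity of the open-loop map in the input, $\otx(k,x,\hun(x)) = \otx(k,x,\un(x)) - \otx(k,0,\du(x))$; for the input part I write $(\hun(x))[k] = (\un(x))[k] - (\du(x))[k]$. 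Applying Lemma \ref{lm:quadratic_norm} to each sum of squared $Q$- and $R$-norms turns these decompositions into $\Vn(x) = \sum_k \nQ{\otx(k,x,\un(x))} + \sum_k \nR{(\un(x))[k]}$, together with the pure error sums $\sum_k \nQ{e_\psi(k)}$, $\sum_k \nQ{\otx(k,0,\du(x))}$ and $\sum_k \nR{(\du(x))[k]}$, plus the cross terms that appear as products of square roots of the nominal cost and the error sums.

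Next I would bound each error sum using the technical lemmas. The term $\sum_k \nQ{e_\psi(k)}$ is controlled by Lemma \ref{lm:multi_step_prediction_error_bound}, which bounds the concatenated prediction error through the error-consistent functions $g^{(2)}_{k,(x)}$ and $g^{(2)}_{k,(u)}$; this produces $\Epsi$. The input-difference contributions $\sum_k \nQ{\otx(k,0,\du(x))}$ and $\sum_k \nR{(\du(x))[k]}$ are handled by first bounding $\ntwo{\du(x)}$ through the QP-sensitivity estimate of Lemma \ref{lm:input_difference_bound} (which itself rests on the Hessian and linear-term perturbation bounds $\theta_{N,(u)}$, $\theta_{N,(x,u)}$ of Lemma \ref{lm:system_level_synthesis_matrix}), and then propagating this input error through the zero-initial-state map via Lemma \ref{lm:zero_initial_propagation}; these yield $\Eu$ and $\Epsiu$. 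The cross terms are linearized using the AM--GM bound of Lemma \ref{lm:square_root} with the free pairs $(p_i,q_i)$ satisfying $p_iq_i=1$, which is precisely what generates the mixed $p_i\,E^{1/2}$ structure in \eqref{eq:alpha_and_beta}, while the $\max$ in $\alpha_N$ reflects worst-casing over the two independent channels (state prediction error versus input difference) through which the model mismatch enters the cost.

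Finally, I would split every error bound into a part proportional to $\ntwo{x}^2$ and an absolute constant stemming from the input-set sizes $\overline{u}$ and $\overline{d}_u$. Using $\ntwo{x}^2 \leq \downQ^{-1}\nQ{x} \leq \downQ^{-1}\Vit(x)$ converts the former into a multiple of $\Vit(x)$ (contributing to $\alpha_N$), while the latter accumulates into the additive constant $\beta_N$; combining this with $\Vn(x)\leq \Vit(x)$ delivers \eqref{eq:bounding_relation_general}. Error-consistency of $\alpha_N$ and $\beta_N$ then follows structurally, since both are built from sums, products, square roots and maxima of the error-consistent blocks $g^{(n)}_{i,(\cdot)}$, $\theta_{N,(\cdot)}$ and the $E$-terms, each of which is non-decreasing in $\eA,\eB$ and vanishes exactly at $\eA=\eB=0$.

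I expect the main obstacle to be Step 3, the bound on $\ntwo{\du(x)}$. The difficulty is that this bound must be free of any quantity derived from the true pair $(A,B)$, so both MPC problems have to be cast as parametric QPs over the \emph{same} feasible set, the perturbation of the Hessian $\hHN$ and of the linear term must be certified purely through $\hA$ and $\hB$ via Lemma \ref{lm:system_level_synthesis_matrix}, and one must take the less conservative of the feasible-set-diameter bound and the sensitivity bound in Lemma \ref{lm:input_difference_bound} so that the resulting $\alpha_N$ and $\beta_N$ remain finite and computable. Carefully tracking which contributions scale with $\ntwo{x}^2$ versus which are constant, throughout all the nested cross-term expansions, is the bookkeeping that ultimately fixes the exact form of \eqref{eq:alpha_and_beta}.
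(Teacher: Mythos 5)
Your proposal follows essentially the same route as the paper: the identical four-step decomposition ($\oex = \otx - e_\psi$, $\hun = \un - \du$, $\otx(k,x,\hun) = \otx(k,x,\un) - \otx(k,0,\du)$), the same technical lemmas (Lemmas \ref{lm:quadratic_norm}, \ref{lm:square_root}, \ref{lm:input_difference_bound}, \ref{lm:zero_initial_propagation}, \ref{lm:multi_step_prediction_error_bound}), and the same AM--GM linearization of the cross terms that produces the $p_i E^{1/2}$ / $q_i E^{1/2}$ structure and the $\max$ over the state- and input-cost coefficients. The one place you deviate is the final bookkeeping: the paper does \emph{not} split the error terms into an $\|x\|_2^2$-proportional part absorbed via $\|x\|_2^2 \leq \underline{\sigma}_Q^{-1}\Vit(x)$; it simply leaves the $x$-dependence inside $\Epsi$, $\Eu$, $\Epsiu$ and bounds $\hVn(x) \leq (1+\alpha_N)\Vn(x) + \beta_N \leq (1+\alpha_N)\Vit(x) + \beta_N$, so your extra conversion would yield valid but different (and larger) constants than the displayed expressions in \eqref{eq:alpha_and_beta}.
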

\vspace{0.1cm}
\cliu{
\noindent \textbf{(1) Details of $\Epsi$}:
Following Lemma \ref{lm:multi_step_prediction_error_bound}, the explicit expression of $\Epsi$ is
\begin{equation}
\label{eq:Epsi}
     \Epsi \seq \overline{\sigma}_Q \Delta_{N, (\psi)}(x),
\end{equation}
where $\Delta_{N, (\psi)}(x)$ is defined as in \eqref{eq:detail_multi_step_prediction_error_bound}.
}

\noindent \textbf{(2) Details of $\Eu$ and $\Epsiu$}:
Based on Lemma \ref{lm:input_difference_bound} and Lemma \ref{lm:zero_initial_propagation}, the explicit form of $\Eu$ is
\cliu{
\begin{equation}
\label{eq:Eu}
    \Eu = \overline{\sigma}_R\left(\Delta_{N, (\du)}(x)\right)^2,
\end{equation}
where $\Delta_{N, (\du)}(x)$ is given as in \eqref{eq:details_input_upper_bound_original}, }and the expression of $\Epsiu$ follows as
\begin{equation}
\label{eq:Epsiu}
    \Epsiu \seq \frac{\overline{\sigma}_Q}{\overline{\sigma}_R}\left(\|\hGaN\|_2 + \bar{g}_{(u)}\right)^2\Eu.
\end{equation}
\begin{proof}
    By definition, $\hVn(x)$ can be expanded as
    \begin{equation}
    \label{eq:proof_pp1_expansion}
        \hspace*{-0.2cm}\hVn(x) \seq \sum^{N}_{k = 0} \left( \nQ{\oex(k, x, \hun(x))} \splus \nR{\big(\hun(x)\big)[k]} \right),
    \end{equation}
    which, by incorporating the prediction error $e_{\psi}(k)$ \cliu{defined as in Appendix \ref{subapp:part_multistep}}, can be rewritten as 
    \begin{multline}
    \label{eq:proof_pp1_prediction_error}
        \hVn(x) = \sum^{N}_{k = 0} \big(\nQ{\otx(k,x,\hun(x)) - e_{\psi}(k)}  \\
        + \nR{\big(\hun(x)\big)[k]} \big).
    \end{multline}
    Applying Lemma \ref{lm:quadratic_norm} to \eqref{eq:proof_pp1_prediction_error}, we obtain
    \begin{multline}
        \hVn(x) \leq \sum^{N}_{k = 0}\left(\nQ{\otx(k, x, \hun(x))} + \nR{\big(\hun(x)\big)[k]} \right)  \\
        + 2\left[\left(\sum^{N}_{k = 0}\nQ{\otx(k,x,\hun(x))}\right) \left(\sum^{N}_{k = 0}\nQ{e_{\psi}(k)} \right) \right]^{\frac{1}{2}}  \\
        \hspace*{27ex} + \sum^{N}_{k = 0}\nQ{e_{\psi}(k)} \\
        \leq \sum^{N}_{k = 0}\left(\nQ{\otx(k, x, \hun(x))} + \nR{\big(\hun(x)\big)[k]} \right)  \\
        + 2\left[\left(\sum^{N}_{k = 0}\nQ{\otx(k,x,\hun(x))}\right) \left(\upQ \|\mathbf{e}_{\psi,N}\|^2_2 \right) \right]^{\frac{1}{2}} + \\
        + \upQ \|\mathbf{e}_{\psi,N}\|^2_2,
    \end{multline}
    where $\mathbf{e}_{\psi,N} = [e^\top_{\psi}(0), e^\top_{\psi}(1), \dots, e^\top_{\psi}(N)]^\top$. Using Lemma \ref{lm:multi_step_prediction_error_bound} and recalling the notation in \eqref{eq:Epsi}, we have
    \begin{multline}
        \hVn(x) \leq \sum^{N}_{k = 0}\bigg(\nQ{\otx(k, x, \hun(x))} + \nR{\big(\hun(x)\big)[k]} \bigg)  \\
        + 2\left[\left(\sum^{N}_{k = 0}\nQ{\otx(k,x,\hun(x))}\right) \Epsi\right]^{\frac{1}{2}} + \\
        + \Epsi.
    \end{multline}
    Then, due to Lemma \ref{lm:square_root}, we can further obtain
    \begin{multline}
        \hVn(x) \leq \left(1 + p_1\Epsi^{\frac{1}{2}}\right)\sum^{N}_{k = 0}\nQ{\otx(k, x, \hun(x))}  \\ 
        + \sum^{N}_{k = 0}\nR{\big(\hun(x)\big)[k]} + q_1\Epsi^{\frac{1}{2}} + \Epsi,
    \end{multline}
    where $p_1q_1 = 1, p_1 > 0$.

    Next, by leveraging the input difference $\big(\du(x)\big)[k] = \big(\un(x)\big)[k] - \big(\hun(x)\big)[k]$, we have
    \begin{multline}
    \label{eq:proof_pp1_cornerstone}
        \hVn(x) \leq \left(1 + p_1\Epsi^{\frac{1}{2}}\right)\sum^{N}_{k = 0}\nQ{\otx(k, x, \hun(x))}  \\ 
        + \sum^{N}_{k = 0}\nR{\big(\un(x)\big)[k] - \big(\du(x)\big)[k]}  \\
        + q_1\Epsi^{\frac{1}{2}} + \Epsi,
    \end{multline}
    In \eqref{eq:proof_pp1_cornerstone}, the term $\sum^{N}_{k = 0}\nR{\big(\un(x)\big)[k] - \big(\du(x)\big)[k]}$ can be proceeded as
    \begin{align}
    \label{eq:proof_pp1_input_difference_derivation}
        & \sum^{N}_{k = 0}\nR{\big(\un(x)\big)[k] - \big(\du(x)\big)[k]} \notag \\
        \leq & \sum^{N}_{k = 0}\bigg(\nR{\big(\un(x)\big)[k]} + \nR{\big(\du(x)\big)[k]}\bigg)  \notag \\
        & \hspace*{1ex} + 2\left[\left(\sum^{N}_{k = 0}\nR{\big(\un(x)\big)[k]}\right) \left(\sum^{N}_{k = 0}\nR{\big(\du(x)\big)[k]} \right) \right]^{\frac{1}{2}} \notag \\
        \leq & (1 + p_2\Eu^{\frac{1}{2}})\sum^{N}_{k = 0}\nR{\big(\un(x)\big)[k]} + \notag \\
        & \hspace*{20ex} + q_2\Eu^{\frac{1}{2}} + \Eu,
    \end{align}
    where the first inequality is due to Lemma \ref{lm:quadratic_norm} \cliu{with $p_2q_2 = 1 (p_2 > 0)$} and the second inequality is due to Lemma \ref{lm:square_root}, Lemma \ref{lm:input_difference_bound}, and the definition as in \eqref{eq:Eu}.
    In addition, due to linearity, we have $\otx(k, x, \hun(x)) = \otx(k, x, \un(x)) - \otx(\cdot,0,\du(x))$, and the term $\sum^{N}_{k = 0}\nQ{\otx(k, x, \hun(x))}$ in \eqref{eq:proof_pp1_cornerstone} can be further proceeded as
    \par\nobreak 
    \vspace{-0.3cm}
    {\small\begin{align}
    \label{eq:proof_pp1_linearity_derivation}
        & \sum^{N}_{k = 0}\nQ{\otx(k, x, \hun(x))} \notag \\
        = & \sum^{N}_{k = 0}\nQ{\otx(k, x, \un(x)) - \otx(k,0,\du(x))} \notag \\
        \leq & \sum^{N}_{k = 0}\bigg(\nQ{\otx(k, x, \un(x))} + \nQ{\otx(k,0,\du(x))}\bigg)  \notag \\
        & \hspace*{-2ex} + 2\left[\left(\sum^{N}_{k = 0}\nQ{\otx(k, x, \un(x))}\right) \left(\sum^{N}_{k = 0}\nQ{\otx(k,0,\du(x))} \right) \right]^{\frac{1}{2}} \notag \\
        \leq & (1 + p_3\Epsiu^{\frac{1}{2}})\sum^{N}_{k = 0}\nQ{\otx(k, x, \un(x))}  \notag \\
        & \hspace*{20ex} + q_3\Epsiu^{\frac{1}{2}} + \Epsiu,
    \end{align}}%
    where the first inequality is due to Lemma \ref{lm:quadratic_norm} \cliu{with $p_3q_3 = 1 (p_3 > 0)$} and the second inequality is due to Lemma \ref{lm:square_root}, Lemma \ref{lm:zero_initial_propagation}, and the definition as in \eqref{eq:Epsiu}.

    Finally, substituting \eqref{eq:proof_pp1_input_difference_derivation} and \eqref{eq:proof_pp1_linearity_derivation} into \eqref{eq:proof_pp1_cornerstone} leads to
    \begin{multline*}
        \hVn(x) \leq (1 + \alpha_N)\Vn(x) + \beta_N \leq (1 + \alpha_N)\Vit(x) + \beta_N,
    \end{multline*}
    where $\alpha_N$ and $\beta_N$ are given as in \eqref{eq:alpha_and_beta}.
\end{proof}

\subsection{More on Proposition \ref{prop:energy_decreasing}}
\label{appendix:D--energy_decreasing}
\begin{proposition}
    There exist a constant $\eta_N$ and an error-consistent function $\xi_N$ satisfying $\xi_N(\delta_A, \delta_B) + \eta_N < 1$, and for all $x \in \xroa$ we have
    \begin{multline*}
        \hVn(\xp) - \hVn(x) \leq \\
        -(1 - \xi_N(\delta_A, \delta_B) - \eta_N) l(x, \hat{\mu}_N(x)),
    \end{multline*}
    where the function $\xi_N$ and constant $\eta_N$ relate to the eigenvalues and matrix norms of $\hA$, $\hB$, $Q$, $R$, the input constraint set $\cU$, but not to quantities derived from $A$ or $B$. \cliu{The detailed expression of $\eta_N$ is
    \begin{equation}
    \label{eq:eta_N}
        \eta_N = \|\hA\|^2_2r_Q\gamma\rho_{\gamma}^{N-N_0},
    \end{equation}
    where $\gamma$, $\rho_{\gamma}$ and $N_0$ are defined in Lemma \ref{lm:terminal_state_exponential_bound} using a linear feedback gain $K$.} Finally, the detailed expression of $\xi_N$ is
    \begin{equation}
    \label{eq:xi_N}
        \xi_N = \omega_{N,(1)}h(\eA, \eB) + 2\omega_{N,(\frac{1}{2})}h^{\frac{1}{2}}(\eA, \eB),
    \end{equation}
    where the function $h$ is given as in \eqref{eq:prediction_error_bound_h_function} in Lemma \ref{lm:prediction_error_bound}, and the terms $\omega_{N,(1)}$ and $\omega_{N,(\frac{1}{2})}$ are given, respectively, in \eqref{eq:omega_1} and \eqref{eq:omega_0.5}.
\end{proposition}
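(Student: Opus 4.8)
The plan is to prove the relaxed dynamic programming inequality by upper-bounding $\hVn(\xp)$ with a single suboptimal candidate sequence and then comparing it term-by-term against $\hVn(x)$. Because $\hVn(\xp)$ is an optimal value and the formulation carries no state constraints, any input sequence whose entries lie in $\cU$ yields a valid upper bound; I would use the one-step shift $\mathbf{v}_N(x)$ of Step~2 of the sketch, padded with $\mathbf{0}_m$ at the last two stages. Since $\mathbf{0}_m \in \cU$ and the shifted entries are the feasible tail of $\hun(x)$, the sequence $\mathbf{v}_N(x)$ is admissible for $\mathrm{P}_{\mathrm{NM-MPC}}$ at $\xp$, so $\hVn(\xp) \le J_N(\xp,\mathbf{v}_N(x))$. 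The crux of the construction is the algebraic identity $\oex(i-1,\xp,\mathbf{v}_N(x)) = \hA^{i-1}\Delta x + \oex(i,x,\hun(x))$ for $i \in \bZ^{+}_{N}$, which I would verify by expanding both estimated-dynamics predictions and using $\xp = Ax + B\hmuN(x)$ together with $\Delta x = (A-\hA)x + (B-\hB)\hmuN(x)$; it exhibits each shifted predicted state as the corresponding state of the original trajectory perturbed by the $\hA$-propagated one-step mismatch.

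Next I would form $J_N(\xp,\mathbf{v}_N(x)) - \hVn(x)$ and exploit the two cancellations. The input costs telescope: the shifted running costs $\nR{\big(\mathbf{v}_N(x)\big)[i-1]}$ cancel against $\nR{\big(\hun(x)\big)[i]}$ for $i \in \bZ^{+}_{N-1}$, leaving $-\nR{\hmuN(x)}$. For the state costs, substituting the identity and applying Lemma~\ref{lm:quadratic_norm} to each $\nQ{\hA^{k}\Delta x + \oex(k+1,x,\hun(x))}$ lets the shifted state costs $\sum_{j=1}^{N}\nQ{\oex(j,x,\hun(x))}$ cancel against the matching terms in $\hVn(x)$, while the initial term yields $-\nQ{x}$. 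Combining the two leftover negative pieces produces $-l(x,\hmuN(x))$, and the remaining nonnegative residue consists of (i) $\sum_{k=0}^{N-1}\nQ{\hA^{k}\Delta x}$, (ii) a Cauchy--Schwarz cross term pairing this sum with $\sum_{j=1}^{N}\nQ{\oex(j,x,\hun(x))}$, and (iii) the uncancelled final-stage cost $\nQ{\oex(N,\xp,\mathbf{v}_N(x))}$, which equals $\nQ{\hA^{N}\Delta x + \hA\,\oex(N,x,\hun(x))}$ after one more use of the identity and $\big(\mathbf{v}_N(x)\big)[N-1]=0$.

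The final step is to bound each residual term by $l(x,\hmuN(x))$ and sort the outcome into the error-consistent part $\xi_N$ and the mismatch-free part $\eta_N$. For (i) and (ii) I would use $\nQ{\hA^{k}\Delta x} \le \upQ\|\hA\|_2^{2k}\|\Delta x\|_2^2$ together with Lemma~\ref{lm:prediction_error_bound} (so $\|\Delta x\|_2^2 \le h(\eA,\eB)\,l(x,\hmuN(x))$) and the bound $\sum_{j=1}^{N}\nQ{\oex(j,x,\hun(x))} \le \hVn(x) \le L_{\widehat{V}}\,l(x,\hmuN(x))$ from Lemma~\ref{lm:terminal_state_exponential_bound}; every such term inherits a factor $h(\eA,\eB)$ or $h^{1/2}(\eA,\eB)$ and thus assembles into $\xi_N = \omega_{N,(1)}h(\eA,\eB) + 2\omega_{N,(\frac{1}{2})}h^{1/2}(\eA,\eB)$ as in \eqref{eq:xi_N}. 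In the final-stage term (iii), splitting via Lemma~\ref{lm:quadratic_norm} isolates the single summand $\nQ{\hA\,\oex(N,x,\hun(x))} \le \|\hA\|_2^{2}r_Q\,\nQ{\oex(N,x,\hun(x))}$, which the terminal-state decay \eqref{eq:exponential_bound_final_stage} bounds by $\|\hA\|_2^{2}r_Q\gamma\rho_\gamma^{N-N_0}\,l(x,\hmuN(x))$; this is precisely $\eta_N$ in \eqref{eq:eta_N}, the only term surviving when $\eA=\eB=0$. All companion terms of (iii) carry $\Delta x$ and therefore merge into $\xi_N$. Finally, $\xi_N + \eta_N < 1$ follows by taking $N$ large enough that $\eta_N<1$ (the horizon condition \eqref{eq:horizon_requirements_original}) and the mismatch small enough, as made explicit in Corollary~\ref{corollary:small_mismatch}.

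The main obstacle I anticipate is the bookkeeping in the second and third steps: one must route every residual term to the correct bundle so that exactly one term---the unperturbed propagation of the terminal state---escapes the mismatch factor and becomes $\eta_N$, while everything touched by $\Delta x$ vanishes as $\eA,\eB \to 0$ and hence lands in the error-consistent $\xi_N$. Getting $\omega_{N,(1)}$ and $\omega_{N,(\frac{1}{2})}$ right requires carefully tracking which geometric sums $\sum_k \|\hA\|_2^{2k}$ and decay factors $\rho_\gamma^{N-N_0}$ multiply the cross terms, and applying the Cauchy--Schwarz splits consistently so that the surviving structure is exactly $\omega_{N,(1)}h + 2\omega_{N,(\frac{1}{2})}h^{1/2}$ rather than a looser aggregate.
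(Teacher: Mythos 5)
Your proposal follows essentially the same route as the paper's proof: the same shifted candidate sequence padded with zeros, the same trajectory-shift identity $\oex(k,\xp,\bvn(x)) = \hA^{k}\Delta x + \oex(k+1,x,\hun(x))$ (the paper's Lemma \ref{lm:trajectory_difference}), the same cancellation of input and initial-state costs yielding $-l(x,\hmuN(x))$, and the same isolation of the sole mismatch-free residual $\nQ{\hA\,\oex(N,x,\hun(x))}\le \|\hA\|_2^2 r_Q\,\nQ{\oex(N,x,\hun(x))}$ into $\eta_N$ via \eqref{eq:exponential_bound_final_stage}. The only difference is cosmetic bookkeeping at the terminal stage — the paper first bounds $\nQ{\oex(N,\xp,\bvn(x))}\le r_Q\|\hA\|_2^2\,\nQ{\oex(N-1,\xp,\bvn(x))}$ and expands once, and it peels the $k=N-1$ cross term off from the $k\le N-2$ ones so that it carries the factor $\gamma\rho_\gamma^{N-N_0}$ while the rest carry $L_{\widehat V}-1$, which is exactly the routing you flag as the remaining task and is what produces the precise constants in \eqref{eq:omega_1} and \eqref{eq:omega_0.5}.
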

\noindent \textbf{Details of $\omega_{N,(1)}$ and $\omega_{N,(\frac{1}{2})}$}: We define $G_N(\hA) = \sum^{N-1}_{i=1}\|\hA\|^{2(i-1)}_2$ as
\begin{equation}
\label{eq:geometric_sum}
    G_N(\hA) = 
    \begin{cases}
        N-1 & \text{if } \|\hA\|_2 = 1 \\
        \frac{1 - (\|\hA\|^2_2)^{N-1}}{1 - \|\hA\|^2_2} & \text{if } \|\hA\|_2 \neq 1,
    \end{cases}
\end{equation}
and $\omega_{N,(1)}$ and $\omega_{N,(\frac{1}{2})}$ follows as
\cliu{
\begin{subequations}
    \begin{align}
        \label{eq:omega_1}
        & \hspace*{-0.15cm}\omega_{N,(1)} \sdeq \upQ\hspace*{-0.1cm}\left[(1 \splus \|\hA\|^2_2r_Q)(\|\hA\|^2_2)^{N-1}\splus G_N(\hA)\right], \hspace*{-0.15cm}\\
        & \hspace*{-0.15cm}\omega_{N,(\frac{1}{2})} \sdeq \left[\upQ(L_{\widehat{V}}-1)G_N(\hA)\right]^{\frac{1}{2}}  \notag \\
        \label{eq:omega_0.5}
        & \hspace{3ex} + \frac{1 \splus \|\hA\|^2_2r_Q}{2}\left[\upQ(\|\hA\|^2_2)^{N-1}\gamma\rho_{\gamma}^{N-N_0}\right]^{\frac{1}{2}},
    \end{align}
\end{subequations}
}
where $L_{\hat{V}}$ is given as in Lemma \ref{lm:terminal_state_exponential_bound} and $G_N(\hA)$ is given as in \eqref{eq:geometric_sum}.

Before presenting the main proof of Proposition \ref{prop:energy_decreasing}, we need to make some preparations. By definition, we have $\hVn(x) = J_N\big(x, \hun(x)\big)$ with 
\begin{multline*}
    \hun(x) = \bigg[\big(\hun(x)\big)[0]^\top, \big(\hun(x)\big)[1]^\top, \dots, \\
    \big(\hun(x)\big)[N-1]^\top, 0\bigg]^\top.
\end{multline*}
Then, we form an auxiliary input sequence $\bvn(x)$ as
\begin{multline*}
    \bvn(x) = \bigg[\big(\hun(x)\big)[1]^\top, \big(\hun(x)\big)[2]^\top, \dots, \\
    \big(\hun(x)\big)[N-1]^\top, u_f, 0\bigg]^\top,
\end{multline*}
\cliu{
where $u_f = 0$, meaning no extra effort is spent on steering the state $\oex\big(N-1, \xp, \bvn(x)\big)$. Consequently, $\oex\left(N, \xp, \bvn(x)\right) = \hA \oex\big(N-1, \xp, \bvn(x)\big)$. To effectively compare $\hVn(\xp)$ and $\hVn(x)$, we need to quantify the state deviation due to model mismatch, which is given in the following lemma:}
\begin{lemma}
    \label{lm:trajectory_difference}
    For $k = 0,1,\dots,N-1$, the following relation holds:
    \begin{equation}
    \label{eq:state_shift_relation}
        \oex\left(k, \xp, \bvn(x)\right) = \oex\big(k+1, x, \hun(x)\big) + \hA^k \Delta x,
    \end{equation}
    where $\Delta x = (A - \hA)x + (B - \hB)\hmuN(x)$ \cliu{(as previously defined in Appendix \ref{subapp:one_step_error})}.
\end{lemma}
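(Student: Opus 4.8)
The plan is to prove the identity by direct computation using the explicit open-loop state-evolution formula \eqref{eq:state_evolution} applied to the estimated model $(\hA,\hB)$, combined with the shift structure of the auxiliary input $\bvn(x)$ and a one-step decomposition of the true successor state $\xp$. The key observation is that $\xp = Ax + B\hmuN(x)$ can be rewritten purely in terms of the estimated dynamics plus the one-step prediction error, namely $\xp = \hA x + \hB\hmuN(x) + \Delta x$, which follows immediately from the definition $\Delta x = (A-\hA)x + (B-\hB)\hmuN(x)$ together with $\hmuN(x) = \big(\hun(x)\big)[0]$.

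First I would expand the left-hand side via \eqref{eq:state_evolution} as
\[
\oex(k,\xp,\bvn(x)) = \hA^k \xp + \sum_{i=0}^{k-1}\hA^{k-1-i}\hB\,\bvn(x)[i],
\]
and substitute the decomposition of $\xp$. The leading term $\hA^k \xp$ splits into $\hA^{k+1}x + \hA^k\hB\,\big(\hun(x)\big)[0] + \hA^k\Delta x$, where the last summand is precisely the correction term appearing on the right-hand side, propagated $k$ steps forward by the estimated dynamics.

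Next I would re-index the remaining sum using the defining shift relation $\bvn(x)[i] = \big(\hun(x)\big)[i+1]$, which holds for $i = 0,\dots,N-2$ and is therefore valid for every $i \le k-1$ as long as $k \le N-1$. Setting $j = i+1$ converts $\sum_{i=0}^{k-1}\hA^{k-1-i}\hB\,\big(\hun(x)\big)[i+1]$ into $\sum_{j=1}^{k}\hA^{k-j}\hB\,\big(\hun(x)\big)[j]$. Absorbing the separated $j=0$ contribution $\hA^k\hB\,\big(\hun(x)\big)[0]$ then yields $\hA^{k+1}x + \sum_{j=0}^{k}\hA^{k-j}\hB\,\big(\hun(x)\big)[j]$, which is exactly $\oex(k+1,x,\hun(x))$ by \eqref{eq:state_evolution}. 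Collecting this with the leftover $\hA^k\Delta x$ establishes \eqref{eq:state_shift_relation}.

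The computation carries no genuine difficulty; the only point requiring care is the index bookkeeping in the shift step and the restriction $k \le N-1$ that guarantees $\bvn(x)[i] = \big(\hun(x)\big)[i+1]$ throughout the summation range, so that the padded entries $u_f = 0$ and $\bvn(x)[N]=0$ never enter for these $k$. The conceptual crux is the rewriting $\xp = \hA x + \hB\hmuN(x) + \Delta x$: it replaces a single \emph{true} propagation step by an \emph{estimated} one at the cost of the additive error $\Delta x$, after which the homogeneity of the estimated dynamics transports that error forward as $\hA^k\Delta x$.
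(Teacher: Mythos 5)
Your proof is correct and follows essentially the same route as the paper's: expand $\oex(k,\xp,\bvn(x))$ via \eqref{eq:state_evolution}, use the shift $\big(\bvn(x)\big)[i]=\big(\hun(x)\big)[i+1]$ to re-index the input sum, and isolate $\hA^k\Delta x$. The only cosmetic difference is that you rewrite $\xp=\hA x+\hB\hmuN(x)+\Delta x$ up front while the paper regroups the terms at the end; the algebra and the index bookkeeping are identical.
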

\begin{proof}
The proof mainly relies on the formula of open-loop state given as in \eqref{eq:state_evolution}, the shifting property of $\bvn(x)$ with respect to $\hun(x)$, and the definitions of $\xp$, $\Delta x$, as well as $\hmuN(x)$. We expand $\oex\left(k, \xp, \bvn(x)\right)$ as
    \begin{align*}
        &\oex\left(k, \xp, \bvn(x)\right) \\
        = & \hA^k\xp + \sum^{k-1}_{i=0}\hA^{k-i-1}\hB\big(\bvn(x)\big)[i] \\
        = & \hA^k\big(Ax + B\hmuN(x)\big) + \sum^{k-1}_{i=0}\hA^{k-i-1}\hB\big(\hun(x)\big)[i+1] \\
        = & \hA^kAx + \hA^kB\hmuN(x) + \sum^{k}_{i=1}\hA^{k-i}\hB\big(\hun(x)\big)[i] \\
        = & \hA^k\big[(A - \hA)x + (B - \hB)\hmuN(x)\big] + \\
        & \hspace*{15ex} + \hA^{k+1}x + \sum^{k}_{i=0}\hA^{k-i}\hB\big(\hun(x)\big)[i] \\
        = & \hA^k\Delta x + \oex\big(k+1, x, \hun(x)\big),
    \end{align*}
    which builds the relation as in \eqref{eq:state_shift_relation}.
\end{proof}

Having established the above results, we then present the main part of the proof of Proposition \ref{prop:energy_decreasing}.

\begin{proof}
    By definition of $\hVn$, we have
    \par\nobreak
    \vspace{-0.3cm}
    {\small
    \begin{align}
    \label{eq:proof_pp2_starting}
        & \hVn(\xp) - \hVn(x) \notag \\
        = & J_N\big(\xp, \hun(\xp)\big) - J_N\big(x, \hun(x)\big) \notag \\
        \leq & J_N\big(\xp, \bvn(x)\big) - J_N\big(x, \hun(x)\big) \notag \\
        \leq & \sum^{N}_{k=0}\bigg( \nQ{\oex\left(k, \xp, \bvn(x)\right)} + \nR{ \big(\bvn(x)\big)[k] } \bigg) \notag \\
        & \hspace*{3ex} - \sum^{N}_{k=0}\bigg( \nQ{\oex\big(k, x, \hun(x)\big)} + \nR{ \big(\hun(x)\big)[k] } \bigg) \notag \\
        \leq & -l\big(x, \hmuN(x)\big) \splus \nQ{ \oex\left(N\sminus1, \xp, \bvn(x)\right) } \splus \nR{u_f} \hspace{0.1cm} \splus \notag \\
        &  + \underbrace{\sum^{N-2}_{k=0}\big(\nR{ \big(\bvn(x)\big)[k]} - \nR{ \big(\hun(x)\big)[k+1] }\big)}_{= 0} + \notag \\
        & + \sum^{N-2}_{k=0}\big(\nQ{\oex\left(k, \xp, \bvn(x)\right)} \sminus \nQ{\oex\big(k\splus 1, x, \hun(x)\big)}\big)\;\splus \notag \\
        & + \nQ{\oex\left(N, \xp, \bvn(x)\right)} - \nQ{\oex\big(N, x, \hun(x)\big)},
    \end{align}
    }
    where the first inequality is due to the optimality of the input sequence $\hun(\xp)$. \cliu{Since $u_f = 0$, we further know that
    \begin{multline}
        \label{eq:proof_pp2_terminal_relation2}
            \nQ{\oex\left(N, \xp, \bvn(x)\right)} \leq \\ r_Q\ntwo{\hA}^2\nQ{ \oex\left(N\sminus1, \xp, \bvn(x)\right) }.
    \end{multline}}
    Substituting \eqref{eq:proof_pp2_terminal_relation2} into \eqref{eq:proof_pp2_starting} yields
    \par\nobreak
    \vspace{-0.3cm}
    {\small
    \begin{align}
    \label{eq:proof_pp2_cornerstone}
        & \hVn(\xp) - \hVn(x) \notag \\
        \leq & -l\big(x, \hmuN(x)\big) \sminus \nQ{\oex\big(N, x, \hun(x)\big)}  \notag \\
        & + \sum^{N-2}_{k=0}\left(\nQ{\oex\left(k, \xp, \bvn(x)\right)} \sminus \nQ{\oex\big(k\splus 1, x, \hun(x)\big)}\right)\;\notag \\
        & + \cliu{\left(1 + r_Q\ntwo{\hA}^2\right)}\nQ{ \oex\left(N\sminus1, \xp, \bvn(x)\right) }.
    \end{align}
    }%
    Using Lemma \ref{lm:quadratic_norm} and Lemma \ref{lm:trajectory_difference}, for $k = N-1$, we have
        \begin{align}
        \label{eq:proof_pp2_N-1_expansion}
        & \nQ{ \oex\left(N-1, \xp, \bvn(x)\right) } \notag \\
        \leq & \nQ{\oex\big(N, x, \hun(x)\big)} + \nQ{ \hA^{N-1}\Delta x }  \notag \\ 
        & \quad + 2\left( \nQ{\oex\big(N, x, \hun(x)\big)}\nQ{ \hA^{N-1}\Delta x } \right)^{\frac{1}{2}},
        \end{align}
    and for $k = 0,1,\dots,N-2$, we have
    \par\nobreak
    \vspace{-0.3cm}
    {\small
    \begin{align}
    \label{eq:proof_pp2_summation_expansion}
        & \sum^{N-2}_{k=0}\left(\nQ{\oex\left(k, \xp, \bvn(x)\right)} \sminus \nQ{\oex\big(k\splus 1, x, \hun(x)\big)}\right) \notag \\
        \leq & 2\left[\left(\sum^{N-1}_{k=1}\nQ{\oex\big(k, x, \hun(x)\big)} \right)\left(\sum^{N-1}_{k=1}\nQ{ \hA^{k-1}\Delta x }\right)\right]^{\frac{1}{2}}  \notag \\
        & \hspace*{5ex} + \sum^{N-1}_{k=1}\nQ{ \hA^{k-1}\Delta x }.
    \end{align}
    }%
    In addition, due to Lemma \ref{lm:prediction_error_bound}, for $k = 0,1,\dots, N-1$, we can obtain
    \begin{equation}
    \label{eq:powerA_deltaX_bound}
        \nQ{\hA^{k}\Delta x} \leq \upQ\|\hA\|^{2k}_2 h l\big(x, \hmuN(x)\big).
    \end{equation}
    Leveraging \eqref{eq:powerA_deltaX_bound} and \eqref{eq:exponential_bound_final_stage}, we can further relax \eqref{eq:proof_pp2_N-1_expansion} as
    \begin{align}
    \label{eq:proof_pp2_relax1}
        & \nQ{ \oex\left(N-1, \xp, \bvn(x)\right) } \notag \\
        \leq & \nQ{\oex\big(N, x, \hun(x)\big)} + \upQ\|\hA\|^{2(N-1)}_2 h l\big(x, \hmuN(x)\big)  \notag \\
        & + \left[\upQ\|\hA\|^{2(N-1)}_2\gamma\rho_{\gamma}^{N-N_0}\right]^{\frac{1}{2}}h^{\frac{1}{2}}l\big(x, \hmuN(x)\big).
    \end{align}
    Likewise, using \eqref{eq:powerA_deltaX_bound} and \eqref{eq:ratio_bound_hatvN}, \eqref{eq:proof_pp2_summation_expansion} can also be relaxed as
    \begin{align}
    \label{eq:proof_pp2_relax2}
        & \sum^{N-2}_{k=0}\left(\nQ{\oex\left(k, \xp, \bvn(x)\right)} \sminus \nQ{\oex\big(k\splus 1, x, \hun(x)\big)}\right) \notag \notag \\
        \leq & 2\left[\upQ(L_{\widehat{V}} - 1)G_N(\hA)\right]^{\frac{1}{2}}h^{\frac{1}{2}}l\big(x, \hmuN(x)\big)  \notag \\
        & + \upQ G_N(\hA)hl\big(x, \hmuN(x)\big),
    \end{align}
    where $G_N(\hA)$ is given as in \eqref{eq:geometric_sum}.
    
    Finally, by substituting \eqref{eq:proof_pp2_relax1} and \eqref{eq:proof_pp2_relax2} into \eqref{eq:proof_pp2_cornerstone}, rearranging the terms and applying \eqref{eq:exponential_bound_final_stage} once again, we obtain
    \begin{multline*}
        \hVn(\xp) - \hVn(x) \\
        \leq -(1 - \omega_{N,(1)}h - 2\omega_{N,(\frac{1}{2})}h^{\frac{1}{2}} - \eta_N )l\big(x, \hmuN(x)\big) \\ 
        \leq -(1 - \xi_N - \eta_N) l\big(x, \hmuN(x)\big), 
    \end{multline*}
    where $\omega_{N,(1)}$ and $\omega_{N,(\frac{1}{2})}$ are given as in \eqref{eq:omega_1} and \eqref{eq:omega_0.5}, respectively; $\xi_N$ and $\eta_N$ are given as in \eqref{eq:xi_N} and \eqref{eq:eta_N}, respectively, and the proof is completed.
\end{proof}

\cliu{
\subsection{An extension of Proposition \ref{prop:energy_decreasing}}
Choosing $u_f = 0$ is a trivial and conservative choice, and the bound obtained may not be as tight as desired (especially for unstable systems). In cases where the modeling error is small, we can choose $u_f = \hK \oex\big(N-1, \xp, \bvn(x)\big)$ with $\xp = Ax + B\hmuN(x)$ defined as in Section \ref{subsec:stability_analysis} and $\hK$ being a new stabilizable linear control gain for $(\hA, \hB)$. Note that $u_f$ is not recursively defined since $\oex\big(N-1, \xp, \bvn(x)\big)$ only requires the first $N-1$ inputs of $\bvn(x)$. We also impose an additional assumption to ensure that $u_f$ is admissible.
\begin{assumption}
    The modeling error is small enough such that there exists a linear feedback gain $\hK$ and its associated local region $\Omega_{\hK} = \{x \in \cX \mid l^\ast(x) \leq \varepsilon_{\hK} \}$ with $\varepsilon_{\hK} \geq \varepsilon_{K}$ satisfying $\oex\big(N, x, \hun(x)\big) + \hat{A}^{N-1}\Delta x \in \Omega_{\hK}$ given $\oex\big(N, x, \hun(x)\big) \in \Omega_{K}$.
\end{assumption}
The condition $\oex\big(N, x, \hun(x)\big) \in \Omega_{K}$ is guaranteed by choosing a prediction horizon $N \geq N_0$ (cf. Lemma \ref{lm:terminal_state_exponential_bound}). In fact, the deviated state $\oex\big(N-1, \xp, \bvn(x)\big) = \oex\big(N, x, \hun(x)\big) + \hat{A}^{N-1}\Delta x$ due to Lemma \ref{lm:trajectory_difference}. Since $u_f = \hK \oex\big(N-1, \xp, \bvn(x)\big)$, we further have the following relations:
\begin{subequations}
    \begin{multline}
    \label{eq:extension_proof_pp2_terminal_relation1}
        \nQ{ \oex\left(N\sminus1, \xp, \bvn(x)\right) } \splus \nR{u_f} \leq \\ C^\ast_{\hK}\nQ{ \oex\left(N\sminus1, \xp, \bvn(x)\right) }
    \end{multline}
    \begin{multline}
    \label{eq:extension_pp2_terminal_relation2}
        \nQ{\oex\left(N, \xp, \bvn(x)\right)} \leq \\ r_Q\ntwo{\hA_{\mathrm{cl}}}^2\nQ{ \oex\left(N\sminus1, \xp, \bvn(x)\right)},
    \end{multline}
\end{subequations}
where $C^\ast_{\hK}$ is defined as in Lemma \ref{lm:local_stabilization_property} with linear feedback gain $\hK$, the closed-loop gain $\hA_{\mathrm{cl}} = \hA + \hB\hK$. It should be noted that \eqref{eq:extension_pp2_terminal_relation2} is an modification of \eqref{eq:proof_pp2_terminal_relation2}. Substituting \eqref{eq:extension_proof_pp2_terminal_relation1} and \eqref{eq:extension_pp2_terminal_relation2} into \eqref{eq:proof_pp2_starting} yields
    \par\nobreak
    \vspace{-0.3cm}
    {\small
    \begin{align}
    \label{eq:proof_pp2_cornerstone_new}
        & \hVn(\xp) - \hVn(x) \notag \\
        \leq & -l\big(x, \hmuN(x)\big) \sminus \nQ{\oex\big(N, x, \hun(x)\big)}  \notag \\
        & + \sum^{N-2}_{k=0}\left(\nQ{\oex\left(k, \xp, \bvn(x)\right)} \sminus \nQ{\oex\big(k\splus 1, x, \hun(x)\big)}\right) \notag \\
        & + \left(C^\ast_{\hK} + r_Q\ntwo{\hA_{\mathrm{cl}}}^2\right)\nQ{ \oex\left(N\sminus1, \xp, \bvn(x)\right) }.
    \end{align}
}%
As a consequence, the terms $\omega_{N,(1)}$ and $\omega_{N,(\frac{1}{2})}$ become
\begin{subequations}
    \begin{align}
        \label{eq:omega_1_new}
        & \hspace*{-0.15cm}\omega_{N,(1)} \sdeq \upQ\hspace*{-0.1cm}\left[(C^\ast_{\hK} + r_Q\ntwo{\hA_{\mathrm{cl}}}^2)(\|\hA\|^2_2)^{N-1}\splus G_N(\hA)\right], \hspace*{-0.15cm}\\
        & \hspace*{-0.15cm}\omega_{N,(\frac{1}{2})} \sdeq \left[\upQ(L_{\widehat{V}}-1)G_N(\hA)\right]^{\frac{1}{2}} \notag \\
        \label{eq:omega_0.5_new}
        & \hspace{3ex} + \frac{C^\ast_{\hK} + r_Q\ntwo{\hA_{\mathrm{cl}}}^2}{2}\left[\upQ(\|\hA\|^2_2)^{N-1}\gamma\rho_{\gamma}^{N-N_0}\right]^{\frac{1}{2}},
    \end{align}
\end{subequations}
and $\eta_N$, originally given as in \eqref{eq:eta_N}, should be modified as
\begin{equation}
    \label{eq:eta_N_new}
    \eta_N = (C^\ast_{\hK} + \|\hAcl\|^2_2r_Q - 1)\gamma\rho_{\gamma}^{N-N_0}.
\end{equation}
Last but not the least, the condition \eqref{eq:horizon_requirements_original} on the desired prediction horizon is changed into
\begin{equation}
\label{eq:horizon_requirements_new}
    N > N_0 + \frac{\log[(C^\ast_{\hK} + \|\hAcl\|^2_2r_Q - 1)\gamma]}{\log(\rho_{\gamma}^{-1})}.
\end{equation}
}


\end{document}